\documentclass[11pt,reqno]{amsart}

\usepackage{geometry}
\usepackage[hidelinks]{hyperref}

\usepackage{amsmath,amsthm,amssymb,amsfonts,bbm}
\usepackage{xcolor}

\theoremstyle{plain}
\newtheorem{theorem}{Theorem}[section]
\newtheorem{proposition}{Proposition}[section]
\newtheorem{lemma}{Lemma}[section]
\newtheorem{corollary}{Corollary}[section]
\newtheorem{assumption}{Assumption}[section]

\theoremstyle{definition}

\newtheorem{remark}{Remark}[section]

\newcommand\independent{\protect\mathpalette{\protect\independenT}{\perp}}
\def\independenT#1#2{\mathrel{\rlap{$#1#2$}\mkern2mu{#1#2}}}

\newcommand{\R}{\mathbb{R}}

\newcommand{\E}{\mathbb{E}}

\newcommand{\calB}{\mathcal{B}}
\newcommand{\calL}{\mathcal{L}}

\newcommand{\calX}{\mathcal{X}}
\newcommand{\calY}{\mathcal{Y}}
\newcommand{\calC}{\mathcal{C}}
\newcommand{\calP}{\mathcal{P}}

\newcommand{\calS}{\mathcal{S}}
\newcommand{\calM}{\mathcal{M}}
\newcommand{\calQ}{\mathcal{Q}}
\newcommand{\calU}{\mathcal{U}}
\newcommand{\calZ}{\mathcal{Z}}
\newcommand{\calN}{\mathcal{N}}

\newcommand{\bN}{\mathbb{N}}

\DeclareMathOperator{\tr}{tr}

\DeclareMathOperator{\Ent}{Ent}

\newcommand{\kk}[1] {{\color{red} KK: [#1]}}

\newcommand{\KL}[2]{\mathsf{KL}(#1 \,\|\, #2 )}

\newcommand{\frakf}{\mathfrak{f}}
\newcommand{\frakg}{\mathfrak{g}}
\newcommand{\frakh}{\mathfrak{h}}

\begin{document}
\title[Entropic VQR]{Entropic vector quantile regression: Duality and Gaussian case}

\thanks{K. Kato is partially supported by NSF grant DMS-2413405.}

\author[K. Kato]{Kengo Kato}
\address[K. Kato]{
Department of Statistics and Data Science, Cornell University.
}
\email{kk976@cornell.edu}

\author[B. Wang]{Boyu Wang}
\address[B. Wang]{
Department of Statistics and Data Science, Cornell University.
}
\email{bw563@cornell.edu}

\begin{abstract}
Vector quantile regression (VQR) is an optimal transport (OT) problem subject to a mean-independence constraint that extends classical linear quantile regression to vector response variables.  Motivated by computational considerations, prior work has considered entropic relaxation of VQR, but its fundamental structural and approximation properties are still much less understood than entropic OT. The goal of this paper is to address some of these gaps. First, we study duality theory for entropic VQR and establish strong duality and dual attainment for marginals with possibly unbounded supports. In addition, when all marginals are compactly supported, we show that dual potentials are real analytic. Second, building on our duality theory, when all marginals are Gaussian, we show that entropic VQR has a closed-form optimal solution, which is again Gaussian, and establish the precise approximation rate toward unregularized VQR.
\end{abstract}
\keywords{Duality, entropic regularization, optimal transport, vector quantile regression}
\subjclass[2020]{49Q22, 62G08}
\date{First version: February 9, 2026. This version: \today}

\maketitle

\section{Introduction}
\subsection{Overview}
Quantile regression \cite{koenker1978regression,koenker2005quantile} offers a powerful statistical methodology for modeling conditional quantiles for scalar response variables.
Recent attention in the literature is to extend 
 quantile regression to vector response variables, which is, however, not straightforward because of the lack of natural ordering in multidimensional space \cite{hallin2022measure}. Optimal transport (OT) provides a promising approach to extending the quantile function to vector variables.\footnote{We refer the reader to \cite{villani2009optimal,santambrogio2015optimal} as standard references on OT theory.} For a vector response variable $Y \in \R^{d_y}$ and a given absolutely continuous reference distribution $\mu$ on $\R^{d_y}$ (such as $\mu = \mathrm{Unif} ([0,1]^{d_y})$ or $\mu=\calN(0,I_{d_y})$), several authors proposed using the \textit{Brenier map} \cite{brenier1991polar} transporting $\mu$ onto the law of $Y$  as a vector quantile function for $Y$ \cite{carlier2016vector,chernozhukov2017monge,ghosal2022multivariate}.

In the presence of covariates $X \in \R^{d_x}$,  \cite{carlier2016vector} considered the conditional Brenier map $\Phi: \R^{d_y+d_x} \to \R^{d_y}$, transporting $\mu$ onto the conditional distribution of $Y$ given $X$, as a conditional vector quantile function, which satisfies the distributional relation
\[
(X,Y) \stackrel{d}{=} (X,\Phi(U,X)), \ U \mid X \sim \mu,
\]
where $\stackrel{d}{=}$ denotes equality in distribution. 
The joint distribution $\pi^o$ of $(U,X,\tilde{Y})$ with $\tilde{Y}=\Phi(U,X)$ and $U \mid X \sim \mu$ can be characterized as an optimal solution to the OT problem with an independence constraint, 
\begin{equation}
\inf_{\pi \in \Pi(\mu,\nu)}\big \{ \E[\|U-\tilde{Y}\|^2] :(U,\tilde{X},\tilde{Y}) \sim \pi, \ U \independent \tilde{X} \big \},
\label{eq: vqr indep}
\end{equation}
where $\nu$ denotes the distribution of $(X,Y)$ and $\Pi(\mu,\nu)$ denotes the collection of couplings for $(\mu,\nu)$. Recall that any coupling $\pi \in \Pi(\mu,\nu)$ is a joint distribution on $\R^{d_y} \times \R^{d_x+d_y}$ with marginals $\mu,\nu$. In addition, \cite{carlier2016vector} considered modeling the conditional vector quantile function $\Phi(u,x)$ as an affine function in $x$, i.e., 
\begin{equation}
\Phi(u,x) = b_0(u) + b_1(u)^\top x
\label{eq: affine}
\end{equation}
for suitable mappings $b_0: \R^{d_y} \to \R^{d_y}$ and $b_1:\R^{d_y} \to \R^{d_x \times d_y}$. 
When $\Phi$ is of the form (\ref{eq: affine}), under technical conditions, the corresponding coupling $\pi^o$ is optimal for the relaxed problem
\begin{equation}
\inf_{\pi \in \Pi(\mu,\nu)} \big \{\E[\|U-\tilde{Y}\|^2] : (U,\tilde{X},\tilde{Y}) \sim \pi, \, \E[\tilde{X} \mid U]=\E[X] \big \},
\label{eq: vqr init 0}
\end{equation}
which is an OT problem subject to a mean-independence constraint \cite[Theorem 3.1]{carlier2016vector}. Following \cite{carlier2016vector}, we shall call (\ref{eq: vqr init 0}) the \textit{vector quantile regression} (VQR) problem, which extends (linear) quantile regression to vector response variables (\cite[Theorem 3.3]{carlier2016vector}; see also their follow-up work \cite{carlier2017vector}). 

For standard OT, entropic regularization offers various computational and analytical advantages, which has prompted extensive research activities on entropic OT \cite{leonard2013survey,nutz2021introduction}. The popularity of entropic OT stems from the fact that it is amenable to efficient computation via a dual block coordinate ascent algorithm, called \textit{Sinkhorn’s algorithm}, for which rigorous convergence guarantees have been developed \cite{sinkhorn1967diagonal,franklin1989scaling,cuturi2013sinkhorn,peyre2019computational,ghosal2025convergence}. Dual potentials for entropic OT, which are completely characterized by the system of functional equations, called the \textit{Schr\"{o}dinger system}, are smooth provided that the ground cost is smooth and marginals have  sufficiently light tails, which enables faster sample complexity rates than unregularized OT  \cite{genevay2019sample, mena2019statistical,del2023improved, gonzalez2022weak,goldfeld2024limit}. In addition, as the regularization parameter tends to zero, various objects of entropic OT converge to those of unregularized OT \cite{mikami2004monge,leonard2012schrodinger,nutz2022entropic,carlier2023convergence,eckstein2024convergence,malamut2025convergence}

For VQR, one can consider its entropic variant by adding an entropic penalty to the primal objective as
\begin{equation}
\inf_{\pi \in \Pi(\mu,\nu)} \big \{\E[\|U-\tilde{Y}\|^2/2] + \varepsilon \KL{\pi}{\mu\otimes \nu} : (U,\tilde{X},\tilde{Y}) \sim \pi, \, \E[\tilde{X} \mid U]=\E[X] \big \},
\label{eq: evqr}
\end{equation}
where $\varepsilon > 0$ is a regularization parameter and $\mathsf{KL}$ denotes the Kullback-Leibler (KL) divergence (or relative entropy) defined by 
\[
\KL{P}{Q} := 
\begin{cases}
\int \log \frac{dP}{dQ}\, dP & \text{if $P \ll Q$}, \\
\infty & \text{otherwise}. 
\end{cases}
\]
We shall call (\ref{eq: evqr}) the \textit{entropic VQR} problem. Without the presence of covariates, entropic VQR reduces to standard entropic OT. The entropic VQR problem (\ref{eq: evqr}) admits a unique optimal solution under mild conditions.  

Entropic VQR was previously considered by \cite{carlier2022vector} for discrete marginals as a practical means to approximately solve the VQR problem (\ref{eq: vqr init 0}). However, to the authors' best knowledge, the fundamental structural and approximation properties of entropic VQR are still much less understood than entropic OT. The goal of this paper is to address some of these gaps. Our contributions are summarized as follows. First, we conduct an in-depth study of duality theory for entropic VQR. At least formally (e.g., by considering the fully discrete case), the dual problem for entropic VQR can be seen as 
\[
\sup_{(f,g,h)} \int f \,d\mu + \int h \, d\nu - \varepsilon \int_{\R^{d_y} \times \R^{d_x+d_y}} e^{ \frac{1}{\varepsilon} \left(f(u) +  \langle g(u), x\rangle + h(x,y) - \|u-y\|^2/2  \right) } \,d\mu(u) d\nu(x,y) + \varepsilon,
\]
where the supremum is taken over a suitable class of functions $f: \R^{d_y} \to \R, g: \R^{d_y} \to \R^{d_x}$, and $h: \R^{d_x+d_y} \to \R$.
As the first main result, we establish strong duality and dual attainment for entropic VQR when the supremum above is taken over $(f,g,h) \in L^1(\mu) \times L^1(\mu;\R^{d_x}) \times L^1(\nu)$. Importantly, our result allows for both $\mu$ and $\nu$ to have unbounded supports. The proof shows that the optimal coupling admits a density of the form 
\[
\frac{d\pi}{d(\mu \otimes \nu)}(u,x,y) = e^{\frac{1}{\varepsilon}\big(f(u)+\langle g(u),x\rangle +h(x,y)-\|u-y\|^2/2\big)},
\]
for suitable triplet of functions $(f,g,h)$, which  yields strong duality and dual attainment. Our proof adapts various techniques from duality theory for entropic OT that originate from \cite{csiszar1975divergence,follmer1988random,follmer1997entropy} (see also \cite{nutz2021introduction}). One obstacle in our proof is that (unconditional) moment constraints approximating the mean-independence constraint need not be continuous in total variation unless $X$ is compactly supported. We bypass this difficulty by imposing coercivity of the KL-divergence in the $1$-Wasserstein topology with respect to a modified Euclidean metric, which appears to be new and can be adapted to handle different conditional moment constraints. In addition, we show that the preceding coercivity assumption holds under a reasonably mild moment condition on $X$.
As another difficulty compared with standard entropic OT, one function $g$ appears in the dual problem through the interaction term, $\langle g(u),x \rangle$, between $u$ (the ``input" variable) and $x$ (part of the ``output'' variables). As such,  a more careful measure-theoretic argument is needed to separately construct dual potentials in a measurable way;  see the discussion above Lemma \ref{lem: measure}.  

Similar to entropic OT, dual potentials for entropic VQR (i.e., optimal solutions to the dual problem) are characterized by the system of functional equations that are akin to the Schr\"{o}dinger system. Our next goal is to study regularity of dual potentials via the said system of functional equations. In contrast to standard entropic OT, one dual potential $g$ is characterized only through an implicit fixed point equation involving other potentials, which poses a significant challenge to study regularity of dual potentials. To overcome the said obstacle, we employ theory of exponential families \cite{wainwright2008graphical,barndorff2014information} to establish real analyticity of dual potentials when $\mu,\nu$ are compactly supported.
The argument herein is new in the OT literature and may be of independent interest. 

Finally, as an important test case, we consider the setting where $\mu$ and $\nu$ are Gaussian. 
Building on our duality theory, we find a closed-form expression for the optimal coupling $\pi^\varepsilon$ for entropic VQR, which is again Gaussian. 
The weak limit $\pi^o$ of the optimal coupling $\pi^\varepsilon$ when $\varepsilon \to 0+$ solves the unregularized VQR problem (\ref{eq: vqr init 0}) (and indeed (\ref{eq: vqr indep}) because of Gaussianity), and we establish the precise approximation rate of $\pi^\varepsilon$ toward $\pi^o$ in the 2-Wasserstein distance.

\subsection{Related literature}
The literature related to this paper is broad, so we confine ourselves to references directly related to our work, other than those already discussed. 
We refer the reader to \cite{panaretos2020invitation,chewi2025statistical} as excellent reviews on statistical OT, which has seen extensive research activities in recent years.

VQR is a special case of weak OT with moment constraints considered in the recent preprint \cite{carlier2025weak}, which establishes strong duality and dual attainment in their general framework, including entropic variants. Importantly, however, \cite{carlier2025weak} require marginals to be compactly supported, and their proofs do not seem to carry over to the unbounded setting, at least directly. Their proof of strong duality rests on a Fenchel-Rockafellar argument, and their proof of dual attainment rests on taking a maximizing sequence for the dual objective and establishing a subsequential limit. These arguments largely differ from our proof. In particular, allowing for unbounded supports brings a major obstacle in our proof of strong duality and dual attainment and is needed to cover Gaussian marginals studied in the later section. Finally, regularity of dual potentials is not studied in \cite{carlier2025weak}.  As such, we view our work and \cite{carlier2025weak} as complementary. 

Another related work is \cite{beiglbock2025fundamental}, where the authors studied duality theory for weak OT under fairly general settings. Their results can be used to establish primal attainment for entropic VQR (see the proof of Proposition \ref{prop: primal attainment}), but our intended duality results do not seem to follow from their general results, at least directly, because it seems nontrivial to verify their Condition (B) in our context.

The structure of the mean-independence constraint in VQR is reminiscent of martingale OT considered in the mathematical finance literature (see, e.g., \cite{beiglbock2013model, galichon2014stochastic}), at least formally. For an entropic variant of martingale OT, \cite{nutz2024martingale} studied its duality theory in detail for two marginals defined on the real line. However, the setting and analysis of VQR differ substantially from martingale OT. In particular, in VQR, the mean-independence constraint is imposed on the auxiliary variable $\tilde{X}$ that does not enter the ground cost $\|U-\tilde{Y}\|^2$, and two marginals $\mu,\nu$ are defined on spaces with different dimensions. 

Gaussian distributions serve as an important test case for OT, as they often allow for closed-form expressions for OT costs and couplings; see \cite{janati2020entropic, mallasto2022entropy} for the case of standard entropic OT with quadratic cost.

\subsection{Organization}
The rest of the paper is organized as follows. Section \ref{sec: duality} formally sets up VQR and entropic VQR, establishes primal attainment,  and presents the results for duality theory. Section \ref{sec: gaussian} presents the results under Gaussian marginals.  Sections \ref{sec: proof duality} and \ref{sec: proof gaussian} collect proofs for Sections \ref{sec: duality} and \ref{sec: gaussian}, respectively. 

\subsection{Notation}
On a Euclidean space, let $\| \cdot \|$ and $\langle \cdot, \cdot \rangle$ denote the standard Euclidean norm and inner product, respectively. 
For any Polish metric space $(M,d)$, we use $\calB(M)$ to denote its Borel $\sigma$-field. Let $\calP(M)$ denote the collection of all Borel probability measures on $M$. When $M$ is a finite-dimensional Euclidean space, we consider the standard Euclidean metric, unless otherwise stated. For any $p \in [1,\infty)$ and any fixed $x_0 \in M$, let $\calP_p(M) := \{ \mu \in \calP(M): \int d^p(x,x_0) \, d\mu(x) < \infty \}$. For $\rho_0,\rho_1 \in \calP_p(M)$, the \textit{$p$-Wasserstein distance} is 
\[
\mathsf{W}_p(\rho_0,\rho_1) := \left ( \inf_{\pi \in \Pi(\rho_0,\rho_1)} \int d^p(x,y) \, d\pi(x,y) \right )^{1/p},
\]
which defines a metric on $\calP_p(M)$. 
For any $\mu \in \calP(M), p \in [1,\infty]$, and $d \in \bN$, let $L^p(\mu; \R^d)$ denote the $L^p(\mu)$-space of Borel measurable mappings $M \to \R^{d}$. We write $L^p(\mu) = L^p(\mu;\R)$. Finally,  for $a \in \R$, let $a^+=\max \{ a,0 \}$ and $a^- = \max \{ -a, 0 \}$; for $a,b \in \R$, we use the notation $a \wedge b = \min \{ a,b \}$.

\section{Duality theory for entropic VQR}
\label{sec: duality}
\subsection{VQR and entropic VQR}
We first fix notation. Let $(X,Y) \in \R^{d_x+d_y}$ be a pair of vectors of covariates $X \in \R^{d_x}$ and response variables $Y \in \R^{d_y}$. Let $\nu \in \calP(\R^{d_x+d_y})$ denote the joint distribution of $(X,Y)$, and let $\nu_x$ denote the conditional distribution of $Y$ given $X$. We assume throughout the paper that 
\[
\E\big[\|X\|^2 + \|Y\|^2\big] < \infty \quad \text{and} \quad \E[X]=0.
\]
The latter condition, $\E[X] = 0$, is for normalization and does not lose generality.
Let $\mu \in \calP(\R^{d_y})$ be a reference measure with a finite second moment. We do not assume that $\mu$ is absolutely continuous. 

For notational convenience, in what follows, we set
\[
c(u,y) := \frac{1}{2}\|u-y\|^2
\]
and write $\int c \, d\pi = \int_{\R^{d_y} \times \R^{d_x+d_y}} c(u,y) \, d\pi(u,x,y)$.
For any coupling $\pi \in \Pi(\mu,\nu)$, let $\pi_u$ denote the conditional distribution of $(\tilde X,\tilde Y)$ given $U$ when $(U,\tilde X,\tilde Y) \sim \pi$. That is, for any nonnegative measurable function $\varphi: \R^{d_y} \times \R^{d_x+d_y} \to [0,\infty]$, 
\begin{equation}
\int \varphi \, d\pi = \int_{\R^{d_y}}\left ( \int_{\R^{d_x+d_y}} \varphi (u,x,y) \, d\pi_u(x,y)\right ) \, d\mu(u). 
\label{eq: rcd}
\end{equation}
See Chapter 10.2 in \cite{Dudley_2002} for (regular) conditional distributions.
With this notation, the VQR problem (\ref{eq: vqr init 0}) reads as
\begin{equation}
\inf_{\pi \in \Pi(\mu, \nu)} \int c \, d\pi\ \  \text{subject to} \ \int_{\R^{d_x+d_y}} x \, d\pi_u(x,y) = 0 \ \text{$\mu$-a.e. $u$},
\label{eq: vqr}
\end{equation}
and the entropic VQR problem (\ref{eq: evqr}) reads as
\begin{equation} \label{eq: entropicPrimal}
\begin{split}
  \mathsf{T}^\varepsilon(\mu,\nu)&:=  \inf_{\pi \in \Pi(\mu, \nu)} \left( \int c \, d\pi + \varepsilon \KL{\pi}{\mu\otimes\nu} \right) \\
  &\qquad \text{subject to} \ \int_{\R^{d_x+d_y}} x \, d\pi_u(x,y) = 0 \ \text{$\mu$-a.e. $u$}.
  \end{split}
\end{equation}

Entropic VQR can be seen as the entropic projection of a modified reference measure onto the feasible set
\begin{equation}
\calQ := \left \{ \pi \in \Pi(\mu,\nu) : \int_{\R^{d_x + d_y}} x \, d\pi_u(x,y) =0 \ \text{$\mu$-a.e. $u$} \right \}.
\label{eq: feasible set}
\end{equation}
Indeed, defining 
\begin{equation}
d\tilde{R} := \alpha^{-1}e^{-c/\varepsilon} dR \quad \text{with} \quad \alpha := \int e^{-c/\varepsilon} \,dR \in (0,\infty), \label{eq: reference}
\end{equation}
we see that
\[
 \int c \, d\pi + \varepsilon \KL{\pi}{R}  = \varepsilon \KL{\pi}{\tilde{R}} - \varepsilon \log \alpha.
\]
Hence, the primal problem (\ref{eq: entropicPrimal}) is equivalent to minimizing $\KL{\pi}{\tilde{R}}$ over $\calQ$. Observe that the feasible set $\calQ$ is nonempty (as $\mu \otimes \nu \in \calQ$) and convex.

We first verify below that the entropic VQR problem (\ref{eq: entropicPrimal}) admits a unique optimal solution $\pi^\varepsilon$.

\begin{proposition}[Primal attainment]
\label{prop: primal attainment}
For every $\varepsilon > 0$, there exists a unique optimal solution $\pi^\varepsilon$ to the primal problem (\ref{eq: entropicPrimal}). 
\end{proposition}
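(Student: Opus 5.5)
Writing $R := \mu\otimes\nu$ and using the reformulation above, the primal problem (\ref{eq: entropicPrimal}) is equivalent to minimizing $\pi \mapsto \KL{\pi}{\tilde R}$ over $\calQ$. The plan is the direct method: (i) the value is finite; (ii) $\calQ$ is weakly closed and tight; (iii) $\KL{\cdot}{\tilde R}$ is weakly lower semicontinuous; (iv) $\KL{\cdot}{\tilde R}$ is strictly convex and $\calQ$ is convex.

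\emph{Finiteness and compactness.} Since $R\in\calQ$ and $\int c\,dR<\infty$ (second moments), the value is at most $\int c\,dR<\infty$. It is also bounded below, since $c\ge 0$ and $\mathsf{KL}\ge 0$. The set $\Pi(\mu,\nu)$ is tight, hence weakly relatively compact by Prokhorov, and it is weakly closed. The objective $\int c\,d\pi + \varepsilon\KL{\pi}{R}$ is weakly lower semicontinuous on $\Pi(\mu,\nu)$: $c$ is continuous and bounded below, and $\mathsf{KL}$ is jointly weakly lsc. So any minimizing sequence in $\calQ$ has a subsequence converging weakly to some $\pi\in\Pi(\mu,\nu)$, with objective at most the infimum. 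It remains to show $\pi\in\calQ$.

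\emph{Closedness of the constraint (main obstacle).} The constraint is equivalent to requiring
\[
\int \langle \phi(u), x\rangle \, d\pi(u,x,y) = 0 \quad \text{for all bounded continuous } \phi:\R^{d_y}\to\R^{d_x}.
\]
This uses that bounded continuous functions determine $\mu$-a.e.\ vanishing of $u\mapsto \int x\,d\pi_u$, which lies in $L^1(\mu)$ because $\int\|x\|\,d\pi<\infty$. The integrand $\langle\phi(u),x\rangle$ is continuous but unbounded in $x$. However, it satisfies $|\langle\phi(u),x\rangle|\le \|\phi\|_\infty\|x\|$, and the $x$-marginal is fixed and equal to that of $\nu$, with $\int\|x\|^2\,d\nu<\infty$. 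Hence $\{\|x\|\}$ is uniformly integrable along $\Pi(\mu,\nu)$. Truncating $\langle\phi(u),x\rangle$ at level $M$ and letting $M\to\infty$ uniformly in $n$ then shows that these linear functionals are weakly continuous on $\Pi(\mu,\nu)$. Therefore $\calQ$ is weakly closed and the limit $\pi$ lies in $\calQ$, which gives existence. Alternatively, one could invoke the weak OT existence results of \cite{beiglbock2025fundamental}, encoding the constraint as a convex (in $\pi_u$) cost $+\infty\cdot\mathbbm{1}\{\int x\,d\pi_u\ne0\}$ that is lsc. The direct argument, however, is self-contained.

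\emph{Uniqueness.} Let $\pi_1,\pi_2$ be two minimizers, both with finite objective. Then $\pi_i\ll R$, and $\bar\pi=(\pi_1+\pi_2)/2\in\calQ$ by linearity of the constraint: the conditional mean of the mixture is the density-weighted average of the conditional means, since both have $u$-marginal $\mu$. The map $\pi\mapsto\int c\,d\pi$ is linear, and $\KL{\cdot}{R}$ is strictly convex on measures absolutely continuous with respect to $R$, by strict convexity of $t\log t$. Hence $\pi_1\ne\pi_2$ would give $\bar\pi$ a strictly smaller objective, a contradiction. So $\pi^\varepsilon$ is unique.
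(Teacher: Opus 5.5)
Your proof is correct. It is a fleshed-out version of the one-line remark that opens the paper's proof: $\calQ$ is weakly compact because the marginals are fixed and $\E\|X\|<\infty$, and uniqueness follows from strict convexity of $\KL{\cdot}{\mu\otimes\nu}$. The argument the paper actually writes out is instead the ``alternative'' weak OT route, which you only mention in passing.

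That route works as follows. The paper disintegrates the entropy as $\KL{\pi}{\mu\otimes\nu}=\int\KL{\pi_u}{\nu}\,d\mu(u)$. It moves the mean constraint into the cost via the indicator $\chi_{\calC}$ of $\calC=\{\rho\in\calP_2:\int x\,d\rho=0\}$. It then applies Theorem 2.2(i) of \cite{beiglbock2025fundamental}, after checking that $C(u,\rho)=\int c(u,y)\,d\rho+\varepsilon\KL{\rho}{\nu}+\chi_{\calC}(\rho)$ is Borel, and convex and lower semicontinuous in $\rho$ for the $\mathsf{W}_2$ topology.

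The two routes use the moment assumption on $X$ at different points. You need it to make the unbounded test functionals $\pi\mapsto\int\langle\phi(u),x\rangle\,d\pi$ weakly continuous on $\Pi(\mu,\nu)$, via uniform integrability of $\|x\|$ under the fixed $X$-marginal. Your reduction of the conditional constraint to these functionals is the same one the paper later uses in Lemma~\ref{lem: moment condition}. In the weak OT route, closedness of $\calC$ is immediate because $\mathsf{W}_2$-convergence carries first moments, and the compactness work is outsourced to the general theorem. Your version is self-contained and elementary. The paper's version places the problem in a framework where any convex, lower semicontinuous constraint on the conditional laws $\pi_u$ could be substituted.

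One small caveat on your aside about the weak OT alternative. The indicator of $\{\rho:\int x\,d\rho=0\}$ is lower semicontinuous only if you work on $\calP_2$ (or $\calP_1$) with the Wasserstein topology, as the paper does. In the weak topology this set is not closed. For example, $(1-\tfrac1n)\delta_{e_1}+\tfrac1n\delta_{-(n-1)e_1}$ has mean zero for every $n$ but converges weakly to $\delta_{e_1}$.
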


In the rest of this section, we fix $\varepsilon > 0$ and study duality theory for entropic VQR.

\subsection{Duality}

Let $R:=\mu \otimes \nu$. Define the dual objective as
\[
D^\varepsilon(f,g,h) := \int f \,d\mu + \int h \, d\nu - \iota^\varepsilon (f,g,h)
\]
with
\[
\iota^\varepsilon (f,g,h) := \varepsilon \int_{\R^{d_y} \times \R^{d_x+d_y}} e^{ \frac{1}{\varepsilon} \left(f(u) +  \langle g(u), x\rangle + h(x,y) - c(u,y)  \right) } \,dR(u,x,y) - \varepsilon.
\]
The dual problem reads as
\begin{equation}
\mathsf{D}^\varepsilon (\mu,\nu) := \sup_{(f,g,h)} D^\varepsilon(f,g,h), 
\label{eq: dual}
\end{equation}
where the supremum is taken over $(f,g,h) \in L^1(\mu) \times L^1(\mu;\R^{d_x}) \times L^1(\nu)$.

We first establish weak duality. The proof is standard except for one place where we verify that $\langle g(u),x \rangle \in L^1(\pi)$ for any $\pi \in \calQ$ with $\KL{\pi}{R} < \infty$, provided that $\iota^\varepsilon (f,g,h) < \infty$. 

\begin{proposition}[Weak duality]
\label{prop: weak duality}
The weak duality holds: $\mathsf{T}^\varepsilon(\mu,\nu) \ge \mathsf{D}^\varepsilon(\mu,\nu)$. 
\end{proposition}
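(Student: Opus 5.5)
Fix $\pi\in\calQ$ with $\KL{\pi}{R}<\infty$; if no such $\pi$ exists, or if $\KL{\pi}{R}=\infty$, the primal value at $\pi$ is $+\infty$ and there is nothing to prove. Fix also $(f,g,h)\in L^1(\mu)\times L^1(\mu;\R^{d_x})\times L^1(\nu)$ with $\iota^\varepsilon(f,g,h)<\infty$; otherwise $D^\varepsilon=-\infty$. Set
\[
\xi(u,x,y):=f(u)+\langle g(u),x\rangle+h(x,y)-c(u,y)
\]
and $p:=d\pi/dR$. The plan is to integrate the pointwise Young (Fenchel) inequality
\[
ab\le e^{b}+a\log a-a,\qquad a\ge0,\ b\in\R,
\]
with $a=p$ and $b=\xi/\varepsilon$, against $R$. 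This gives
\[
\int \xi\,d\pi\le \varepsilon\int e^{\xi/\varepsilon}\,dR+\varepsilon\KL{\pi}{R}-\varepsilon,
\]
that is, $\int\xi\,d\pi\le \iota^\varepsilon(f,g,h)+\varepsilon\KL{\pi}{R}$.

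To make this rigorous, I need $\xi\in L^1(\pi)$, or at least $\xi^+\in L^1(\pi)$ together with a well-defined splitting of the integral. The terms $f$, $h$ and $c$ are harmless. Since $\pi$ has marginals $\mu$ and $\nu$, we get $\int |f|\,d\pi=\int|f|\,d\mu$ and $\int|h|\,d\pi=\int|h|\,d\nu$. Moreover $\int c\,d\pi<\infty$ because both $\mu$ and $\nu$ have finite second moments.

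Once $\langle g(u),x\rangle\in L^1(\pi)$ is known, we can split $\int\xi\,d\pi$. The mean-independence constraint, applied through the disintegration (\ref{eq: rcd}), then gives
\[
\int\langle g(u),x\rangle\,d\pi=\int\Big\langle g(u),\int x\,d\pi_u\Big\rangle\,d\mu(u)=0.
\]
Here Fubini is legitimate by integrability. Hence
\[
\int\xi\,d\pi=\int f\,d\mu+\int h\,d\nu-\int c\,d\pi .
\]
Rearranging the integrated Young inequality yields $\int c\,d\pi+\varepsilon\KL{\pi}{R}\ge D^\varepsilon(f,g,h)$. Taking the infimum over $\pi$ and the supremum over $(f,g,h)$ gives the claim.

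The main obstacle is proving $\langle g(u),x\rangle\in L^1(\pi)$, since $g$ is only in $L^1(\mu)$ and $x$ is only square integrable. I would use Young's inequality in the form $|t|\,p\le e^{|t|}+p\log p-p$. Take $t=\frac{1}{\varepsilon}\bigl(\langle g(u),x\rangle+\xi_0\bigr)$, where $\xi_0:=f+h-c$ is already $\pi$-integrable. This bounds $\int|\xi|\,d\pi$ by $\int e^{|\xi|/\varepsilon}\,dR$ plus a finite term. However, $e^{|\xi|/\varepsilon}\le e^{\xi/\varepsilon}+e^{-\xi/\varepsilon}$, and the second term is not controlled. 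So I would instead bound only the positive part:
\[
\int \xi^+\,d\pi\le\int e^{\xi/\varepsilon}\,dR+\varepsilon\KL{\pi}{R}<\infty .
\]
It follows that $(\langle g,x\rangle)^+\le \xi^++|\xi_0|$ is $\pi$-integrable.

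For the negative part I would use the conditional structure. For $\mu$-a.e. $u$, the constraint $\int x\,d\pi_u=0$ gives
\[
\int\langle g(u),x\rangle^-\,d\pi_u=\int\langle g(u),x\rangle^+\,d\pi_u,
\]
provided $\int\|x\|\,d\pi_u<\infty$, which holds $\mu$-a.e. since $\int\|x\|\,d\pi<\infty$. Integrating in $u$ by Tonelli then gives
\[
\int (\langle g,x\rangle)^-\,d\pi=\int(\langle g,x\rangle)^+\,d\pi<\infty,
\]
so $\langle g(u),x\rangle\in L^1(\pi)$. This closes the gap, and the remaining steps above are routine.
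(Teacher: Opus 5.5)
Your proposal is correct and follows essentially the same route as the paper: you integrate the Fenchel--Young inequality $p\,\xi/\varepsilon\le e^{\xi/\varepsilon}+p\log p-p$ against $R$, put $(\langle g(u),x\rangle)^+$ in $L^1(\pi)$ through $\xi^+$ and the integrable remainder $f+h-c$, and then use the disintegration and the constraint $\int x\,d\pi_u=0$ to make the negative part equal to the positive part. One small slip: the bound on the positive part should read $\int\xi^+\,d\pi\le\varepsilon\int e^{\xi/\varepsilon}\,dR+\varepsilon\KL{\pi}{R}$ (your version without the factor $\varepsilon$ can fail for $\varepsilon>1$), but this is immaterial since only finiteness is used.
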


For strong duality and dual attainment, we will make the following additional assumption. Recall that $\tilde{R}$ is a probability measure on $\R^{d_y} \times \R^{d_x+d_y}$ defined in (\ref{eq: reference}). We consider the $1$-Wasserstein distance for a modified metric on $\R^{d_y} \times \R^{d_x+d_y}$ different from the Euclidean one. Define 
\[
\tilde{\mathsf{d}}\big((u,x,y),(u',x',y')\big) := \|u-u'\| \wedge 1 + \|x-x'\| + \| y-y' \| \wedge 1
\]
for $(u,x,y), (u',x',y') \in \R^{d_y} \times \R^{d_x+d_y}$. 
The metric $\tilde{\mathsf{d}}$ induces the same topology as the Euclidean one. 
We denote by $\tilde{\mathsf{W}}_1$ the corresponding $1$-Wasserstein distance,
\[
\tilde{\mathsf{W}}_1 (P,Q) := \inf_{\gamma \in \Pi(P,Q)} \int \tilde{\mathsf{d}} \, d\gamma,
\]
which defines a metric on
\begin{equation}
\tilde{\calP}_1 := \left  \{ P \in \calP(\R^{d_y} \times \R^{d_x+d_y}) : \int \tilde{\mathsf{d}} (\cdot,0) \, dP < \infty \right \}.
\label{eq: modified P1}
\end{equation}
By definition, $P \in \tilde{\calP}_1$ if and only if $\E[\|\tilde{X}\|] < \infty$ for $(U,\tilde{X},\tilde{Y}) \sim P$.
Observe that $P_n \to P$ in $\tilde{\mathsf{W}}_1$ if and only if, for any continuous function $\varphi: \R^{d_y} \times \R^{d_x+d_y} \to \R$ with $|\varphi(u,x,y)| \le K(1+\|x\|)$ for $(u,x,y) \in \R^{d_y} \times \R^{d_x+d_y}$ for some finite constant $K$, it holds that $\int \varphi \, dP_n \to \int \varphi dP$. See Theorem 6.9 in \cite{villani2009optimal}. In particular, the topology induced by $\tilde{\mathsf{W}}_1$ is stronger than the weak topology.
\begin{assumption}
\label{asmp: dual}
(i) The matrix $\E[XX^\top]$ is invertible. 
(ii) The mapping $Q \mapsto \mathsf{KL}(Q \, \| \, \tilde{R})$ is coercive in $\tilde{\mathsf{W}}_1$, i.e., for any $0 < a < \infty$, the sublevel set
\[
\big\{ Q \in \tilde{\calP}_1 : \mathsf{KL}(Q \, \| \, \tilde{R}) \le a \big\}
\]
is compact for the $\tilde{\mathsf{W}}_1$-topology. 
\end{assumption}

Condition (i) guarantees that $X$ is not concentrated on any affine hyperplane in $\R^{d_x}$. The function $g$ enters the dual objective $D(f,g,h)$ only through $\langle g(u),x \rangle$, and the said condition guarantees to recover $g$ from $\langle g(u),x \rangle$. 
Condition (ii) is a high-level condition that guarantees the existence of the entropic projection onto the set of probability measures satisfying a finite number of unconditional moment constraints that approximate the feasible set $\calQ$ in (\ref{eq: feasible set}).
Since the support of $X$ may be unbounded, these unconditional moment constraints need not be continuous in total variation, but continuous in $\tilde{\mathsf{W}}_1$; see Lemma \ref{lem: moment condition} below. Coercivity of $\mathsf{KL}(\cdot \, || \, \tilde{R})$ in $\tilde{\mathsf{W}}_1$ ensures the existence of the said entropic projection.
Condition (ii) is satisfied under a suitable moment condition on $X$.

\begin{lemma}
\label{lem: coercive X}
Assumption \ref{asmp: dual}~(ii) holds if
\begin{equation}
\E\big [ e^{\alpha' \| X \|}\big] < \infty, \ \forall \alpha' > 0. 
\label{eq: superexponential}
\end{equation}
\end{lemma}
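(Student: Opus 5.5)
Fix $0<a<\infty$ and let $S_a := \{ Q \in \tilde{\calP}_1 : \KL{Q}{\tilde{R}} \le a \}$. The plan is to prove that $S_a$ is relatively compact in $\tilde{\mathsf{W}}_1$ and closed in $\tilde{\mathsf{W}}_1$.

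\textbf{Relative compactness.} By the characterization of $\tilde{\mathsf{W}}_1$-convergence recalled above (Theorem 6.9 in \cite{villani2009optimal}, applied with the metric $\tilde{\mathsf{d}}$), it suffices to show two things. First, $S_a$ is tight. Second, the first moments $\int \tilde{\mathsf{d}}(\cdot,0)\,dQ$ are uniformly integrable over $Q \in S_a$. Because the $u$- and $y$-parts of $\tilde{\mathsf{d}}$ are bounded by $1$, the second condition reduces to
\[
\lim_{M\to\infty}\sup_{Q \in S_a} \int_{\{\|x\|>M\}} \|x\|\, dQ = 0.
\]

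Tightness is the standard part. Sublevel sets of $\KL{\cdot}{\tilde{R}}$ are weakly compact whenever $\tilde{R}$ is a probability measure on a Polish space (Donsker--Varadhan). For uniform integrability I use the Donsker--Varadhan variational inequality
\[
\int \varphi \, dQ \le \KL{Q}{\tilde{R}} + \log \int e^{\varphi}\, d\tilde{R},
\]
taken with $\varphi = \lambda \|x\| \mathbbm{1}\{\|x\|>M\}$ for a parameter $\lambda>0$. This gives
\[
\int_{\{\|x\|>M\}} \|x\|\, dQ \le \frac{1}{\lambda}\Big( a + \log\Big(1+\int_{\{\|x\|>M\}} e^{\lambda\|x\|}\, d\tilde{R}\Big)\Big).
\]
Next I bound the exponential moment under $\tilde{R}$. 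Since $d\tilde{R} = \alpha^{-1}e^{-c/\varepsilon}\, dR$ and $e^{-c/\varepsilon}\le 1$, we have $\int e^{\lambda\|x\|}\, d\tilde{R} \le \alpha^{-1}\E[e^{\lambda\|X\|}]$, which is finite for every $\lambda$ by (\ref{eq: superexponential}). Dominated convergence then sends the tail integral to $0$ as $M\to\infty$, for each fixed $\lambda$. So the $\limsup$ over $M$ is at most $a/\lambda$, and letting $\lambda\to\infty$ makes it zero. This is exactly where the superexponential moment condition is needed: with only a fixed $\lambda$ we would get the bound $a/\lambda$, not $0$. The same inequality with $M=0$ also shows that $S_a$ lies in $\tilde{\calP}_1$ at all.

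\textbf{Closedness.} Suppose $Q_n \in S_a$ and $Q_n \to Q$ in $\tilde{\mathsf{W}}_1$. Then $Q_n \to Q$ weakly, and weak lower semicontinuity of KL gives $\KL{Q}{\tilde{R}} \le a$. Hence $Q \in S_a$.

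\textbf{Conclusion.} Let $(Q_n)\subset S_a$. By weak compactness there is a subsequence converging weakly to some $Q$, and lower semicontinuity puts $Q$ in $S_a$. Combining weak convergence with the uniform integrability of $\|x\|$ upgrades this to $\tilde{\mathsf{W}}_1$-convergence, again by Villani's Theorem 6.9. Thus $S_a$ is sequentially compact, and since $\tilde{\mathsf{W}}_1$ is a metric, it is compact.

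The main obstacle is the uniform integrability step. I expect it to go through cleanly via the entropy inequality argument above.
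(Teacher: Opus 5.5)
Your proof is correct, and its skeleton matches the paper's. Both show tightness plus uniform integrability of $\tilde{\mathsf d}(\cdot,0)$ over the sublevel set, and both upgrade this to $\tilde{\mathsf W}_1$-compactness via Villani's Theorem 6.9. Both also transfer exponential moments from $X$ to $\tilde R$ using $e^{-c/\varepsilon}\le 1$, and take limits in the same order: truncation level first, exponential parameter second.

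The difference lies in the two ingredients. The paper proves a general lemma for an arbitrary Polish $(M,d)$ and reference $Q$ with $\int e^{\alpha d(\cdot,x_0)}\,dQ<\infty$ for all $\alpha>0$, then applies it with $\tilde{\mathsf d}$ and $\tilde R$. There, tightness is derived by hand from the data processing inequality via $P(A)\le (a+\log 2)/\log(1/Q(A))$. The tails are controlled with the Bolley--Villani weighted Pinsker inequality, which bounds $\int\varphi_\lambda\,d(P-Q)$ and so carries an extra term $\int\varphi_\lambda\,dQ$ and a factor $\sqrt a + a/2$.

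You instead cite weak compactness of KL sublevel sets, which the paper's footnote also attributes to Sanov's theorem. You then bound the tail moment directly with the Donsker--Varadhan entropy inequality, obtaining $\lambda^{-1}\bigl(a+\log\bigl(1+\int_{\{\|x\|>M\}}e^{\lambda\|x\|}\,d\tilde R\bigr)\bigr)$. This is shorter and rests on a more basic tool.

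The paper's packaging gives a reusable lemma for general metric spaces, complemented by the partial converse in its appendix. Yours is more self-contained for this particular $\tilde R$ and uses directly the fact that only the $x$-component of $\tilde{\mathsf d}$ is unbounded.
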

\begin{remark} A few remarks are in order. 
\begin{enumerate}
\item[(i)] Recall that a real-valued random variable $\xi$ or its law is called \textit{$\beta$-sub-Weibull} for some $\beta \in (0,\infty)$ if 
\[
\| \xi \|_{\psi_{\beta}} := \inf \left\{ K>0 : \E\big[e^{|\xi/K|^\beta} \big] \le 2 \right\} < \infty.
\]
We say that a random vector $Z$ or its law is $\beta$-sub-Weibull if $\| \| Z \| \|_{\psi_\beta} < \infty$. 
Condition (\ref{eq: superexponential}) is satisfied if $X$ is $\beta$-sub-Weibull for some $\beta > 1$. 
\item[(ii)] Condition (\ref{eq: superexponential}) is not necessary for Assumption \ref{asmp: dual}~(ii) to hold. See Remark \ref{rem: necessity} below. 
\end{enumerate}
\end{remark}

Now, we state the first main theorem of this section.

\begin{theorem}[Strong duality and dual attainment]\label{thm: duality}
Under Assumption \ref{asmp: dual}, the following hold. 
\begin{enumerate}
\item[(i)] (Strong duality). $\mathsf{T}^\varepsilon(\mu,\nu) = \mathsf{D}^\varepsilon(\mu,\nu)$. 

\item[(ii)] (Dual attainment). There exist functions $(f^\varepsilon, g^\varepsilon, h^\varepsilon) \in L^1(\mu) \times L^1(\mu;\R^{d_x}) \times L^1(\nu)$ such that 
\begin{equation}
\frac{d\pi^{\varepsilon}}{dR} (u,x,y) = e^{\frac{1}{\varepsilon}(f^{\varepsilon}(u) + \langle g^\varepsilon(u), x\rangle  +h^{\varepsilon}(x,y) -c(u,y))}.
\label{eq: density}
\end{equation}
These functions are optimal for the dual problem (\ref{eq: dual}). Finally, the primal cost $\mathsf{T}^\varepsilon(\mu,\nu)$ is expressed as
\[
\mathsf{T}^\varepsilon(\mu,\nu) = \int f^\varepsilon \, d\mu + \int h^\varepsilon \, d\nu. 
\]
\end{enumerate}
\end{theorem}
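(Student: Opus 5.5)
The plan is to show that the density of $\pi^\varepsilon$ with respect to $R$ has the factorized form (\ref{eq: density}). Once that holds, strong duality follows from weak duality (Proposition \ref{prop: weak duality}) by a direct computation. We work with the entropic projection formulation: $\pi^\varepsilon$ minimizes $\KL{\cdot}{\tilde R}$ over $\calQ$.

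First, approximate $\calQ$ by finitely many unconditional moment constraints. Fix a countable family $\{\phi_k\}$ of bounded continuous functions on $\R^{d_y}$ that is dense enough to determine conditional means; for instance, indicators smoothed from a countable generating algebra, or bounded continuous functions dense in $L^2(\mu)$. Let $\calQ_n$ be the set of $Q$ with marginals constrained and with $\int \phi_k(u)\, x \, dQ = 0$ for $k\le n$. To keep these sets closed, I would impose the marginal constraints only through finitely many test functions as well, or handle them in the Csisz\'ar--F\"ollmer style by iterative projection. Since $|\phi_k(u) x_j|\le K(1+\|x\|)$, these constraints are continuous in $\tilde{\mathsf W}_1$, as in the cited Lemma \ref{lem: moment condition}. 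Assumption \ref{asmp: dual}(ii) then gives existence of the projection $Q_n$ onto each closed convex set. By Csisz\'ar's theory of I-projections onto linear families, $Q_n$ has density
\[
\exp\Big(f_n(u)+\sum_{k\le n}\langle \lambda_k,\phi_k(u)x\rangle+h_n(x,y)\Big)
\]
with respect to $\tilde R$, at least on the support, in a generalized sense. Setting $g_n:=\sum_k \lambda_k\phi_k$ gives a triplet whose density takes the desired form.

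Second, pass to the limit. The values $\KL{Q_n}{\tilde R}$ increase and are bounded by $\KL{\pi^\varepsilon}{\tilde R}<\infty$, so by coercivity $Q_n\to Q_\infty$ in $\tilde{\mathsf W}_1$. The limit satisfies all constraints and hence lies in $\calQ$. Lower semicontinuity together with uniqueness (Proposition \ref{prop: primal attainment}) gives $Q_\infty=\pi^\varepsilon$. The Pythagorean inequality for I-projections, $\KL{\pi^\varepsilon}{\tilde R}\ge \KL{\pi^\varepsilon}{Q_n}+\KL{Q_n}{\tilde R}$, yields $\KL{\pi^\varepsilon}{Q_n}\to0$. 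Therefore $\log(dQ_n/d\tilde R)\to \log(d\pi^\varepsilon/d\tilde R)$ in $L^1(\pi^\varepsilon)$, at least along a subsequence and in probability. The limit $\psi$ of functions of the form $f_n(u)+\langle g_n(u),x\rangle+h_n(x,y)$ must itself decompose as $f(u)+\langle g(u),x\rangle+h(x,y)$. This closedness-of-sums step is the main obstacle. To handle it, I would apply conditional-expectation and differencing arguments in the F\"ollmer--Nutz style. Taking differences in $x$ at fixed $y$ is not available directly, so instead I would use the product structure of $R$. For $\mu$-a.e.\ $u,u'$, the quantity $\psi(u,x,y)-\psi(u',x,y)=f(u)-f(u')+\langle g(u)-g(u'),x\rangle$ is affine in $x$. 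Invertibility of $\E[XX^\top]$ (Assumption \ref{asmp: dual}(i)) identifies $g(u)-g(u')$ uniquely by least-squares regression on $x$ under $\nu$. Fixing a good $u'$ and invoking a measurable-selection lemma (Lemma \ref{lem: measure}) then defines $f$, $g$ and $h$ measurably.

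Third, integrability and the duality identities. From (\ref{eq: density}), $\int c\,d\pi^\varepsilon+\varepsilon\KL{\pi^\varepsilon}{R}=\int(f+\langle g,x\rangle+h)\,d\pi^\varepsilon$. The constraint $\int x\,d\pi^\varepsilon_u=0$ kills the middle term, and the marginal constraints give $\int f\,d\mu+\int h\,d\nu$, while $\iota^\varepsilon=0$ since the density integrates to one. To get the $L^1$ memberships, I would bound $f^+$ and $h^+$ via Jensen's inequality on the marginal equations, using $\E\|X\|^2<\infty$ and finiteness of the KL divergence. For $g\in L^1(\mu)$ I would use the fact that $\langle g(u),x\rangle\in L^1(\pi^\varepsilon)$, together with a uniform-in-$u$ lower bound on $\int|\langle v,x\rangle|\,d\pi^\varepsilon_u$ for unit $v$, which again comes from Assumption \ref{asmp: dual}(i). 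If needed, I would shift constants between $f$ and $h$. Then $D^\varepsilon(f,g,h)=\mathsf T^\varepsilon$, and weak duality shows both strong duality and that the triplet is optimal.
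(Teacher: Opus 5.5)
Your Steps 1--2 essentially reproduce the paper's construction. The paper uses finitely many moment constraints closed in $\tilde{\mathsf W}_1$, existence of the projections from Assumption \ref{asmp: dual}~(ii), and Csisz\'ar's representation with bounded $f_n,g_n,h_n$. It then uses the Pythagorean argument of \cite[Proposition 1.17]{nutz2021introduction} and a pointwise decomposition of the a.e.\ limit via Lemma \ref{lem: measure}.

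Two points there need tightening. First, you cannot write $\psi(u,\cdot)-\psi(u',\cdot)=f(u)-f(u')+\langle g(u)-g(u'),x\rangle$, since that presupposes the decomposition. You must derive it from the approximants, whose differences are affine in $x$ and converge at $d_x+1$ affinely independent points. This is exactly what the normalization $f_n(u^*)=0$, $g_n(u^*)=0$ together with Lemma \ref{lem: measure} achieves. Second, convergence in $L^1(\pi^\varepsilon)$ only gives the decomposition $\pi^\varepsilon$-a.e. Your product-structure argument and (\ref{eq: density}) need it $R$-a.e., so you must also show $R\ll\pi^\varepsilon$.

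The genuine gap is Step 3, the proof that $(f,g,h)\in L^1(\mu)\times L^1(\mu;\R^{d_x})\times L^1(\nu)$.
\begin{itemize}
\item The uniform-in-$u$ lower bound on $\int|\langle v,x\rangle|\,d\pi^\varepsilon_u$ does not follow from Assumption \ref{asmp: dual}~(i). That assumption concerns only the marginal law of $X$, and the conditional laws $\pi^\varepsilon_u$ can collapse toward $x=0$. Take $d_x=d_y=1$, $\mu=\calN(0,1)$, $X$ with density $|x|$ on $[-1,1]$, and $Y=|X|^{-1/4}$. Then $\nu$ is invariant under $x\mapsto-x$, so by uniqueness the unconstrained entropic OT coupling already lies in $\calQ$ and $g^\varepsilon=0$. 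Given $U=u$, $\tilde Y$ follows an exponential family in $u$ with statistic $y$, and its mass escapes to $+\infty$ as $u\to\infty$. Hence $\int|x|\,d\pi^\varepsilon_u=\E[\tilde Y^{-4}\mid U=u]\to 0$. The theorem is fine in this example, but your argument for $g\in L^1(\mu)$ breaks.
\item The Jensen bounds are circular. Bounding $f^+$ through (\ref{eq: f}) needs $\int h\,d\nu>-\infty$, and bounding $h^+$ through (\ref{eq: h}) needs $\int g\,d\mu$ to exist.
\item $\langle g(u),x\rangle\in L^1(\pi^\varepsilon)$ is not available at that stage. Finite entropy only gives integrability of the sum $f+\langle g,x\rangle+h-c$ under $\pi^\varepsilon$, and $\pi^\varepsilon$ has no product structure with which to separate the terms.
\end{itemize}

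The missing idea is to work under the product measure $R$ rather than $\pi^\varepsilon$. Since $R=\mu\otimes\nu\in\calQ$ and $\KL{R}{\tilde R}<\infty$, Csisz\'ar's theorem \cite[Corollary 1.13]{nutz2021introduction} gives $F=\log(d\pi^\varepsilon/d\tilde R)\in L^1(R)$. This also settles $R\ll\pi^\varepsilon$.
\begin{enumerate}
\item By Fubini, $u\mapsto f(u)+\langle g(u),x\rangle$ is $\mu$-integrable for $\nu$-a.e.\ $(x,y)$.
\item Choose $d_x+1$ affinely independent such points $x_0,\dots,x_{d_x}$ (Lemma \ref{lem: nondegeneracy}) and take differences. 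This gives $\langle g,x_i-x_0\rangle\in L^1(\mu)$ for each $i$, hence $g\in L^1(\mu;\R^{d_x})$.
\item Then $f\in L^1(\mu)$, and $h\in L^1(\nu)$ using $\E\|X\|<\infty$.
\item Only now does $|\langle g(u),x\rangle|\le|F|+|f|+|h|$ give $\langle g(u),x\rangle\in L^1(\pi^\varepsilon)$. This is what you need to kill the middle term in your final duality computation.
\end{enumerate}
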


We shall call any triplet of functions $(f,g,h)$ attaining the supremum in the dual problem (\ref{eq: dual}) \textit{dual potentials}. With respect to the modified reference measure $\tilde{R}$, against which we take the entropic projection, the optimal coupling $\pi^\varepsilon$ has a density of the form
\[
\frac{d\pi^{\varepsilon}}{d\tilde{R}} (u,x,y) = e^{\frac{1}{\varepsilon}(f^{\varepsilon}(u) + \langle g^\varepsilon(u), x\rangle  +h^{\varepsilon}(x,y)) + \log \alpha}.
\]
In contrast to standard entropic OT, there is an interaction term between $u$ and $x$ via $\langle g^\varepsilon(u),x \rangle$ due to the mean-independence constraint, so the log density does not factor into a tensor sum of two separate functions of $u$ and $(x,y)$ only.

The next proposition addresses uniqueness of dual potentials. It shows that dual potentials are unique up to an affine shift.

\begin{proposition}[Uniqueness of dual potentials]
\label{prop: uniqueness}
Suppose that Assumption \ref{asmp: dual}~(i) holds. If $(f,g,h), (\tilde f,\tilde g,\tilde h) \in L^1(\mu) \times L^1(\mu;\R^{d_x}) \times L^1(\nu)$ are both optimal solutions to (\ref{eq: dual}), then there exist $a \in \R$ and $v \in \R^{d_x}$ such that
\[
\begin{split}
&\tilde f (u)=f (u)+ a, \ \tilde g (u)= g (u)+ v, \ \text{$\mu$-a.e. $u$,} \\
&\tilde h (x,y)= h (x,y)- a - \langle v,x\rangle, \ \text{$\nu$-a.e. $(x,y)$}.
\end{split}
\]
\end{proposition}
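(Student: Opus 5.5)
The plan is to use strict concavity of the dual objective in the combined log-density $\varphi(u,x,y) := f(u) + \langle g(u),x\rangle + h(x,y)$, and then recover the separate pieces from the structure of $\varphi$ using $\E[X]=0$ and Assumption \ref{asmp: dual}~(i).

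\textbf{Step 1 (rewriting the dual objective).} For $(f,g,h) \in L^1(\mu)\times L^1(\mu;\R^{d_x})\times L^1(\nu)$, the function $\langle g(u),x\rangle$ is $R$-integrable, since $R=\mu\otimes\nu$ and $\E\|X\|<\infty$. By Fubini and $\E[X]=0$ we get $\int \langle g(u),x\rangle\, dR = \langle \int g\,d\mu, \E[X]\rangle = 0$. Hence
\[
D^\varepsilon(f,g,h) = \int \varphi \, dR - \varepsilon \int e^{(\varphi - c)/\varepsilon}\, dR + \varepsilon .
\]

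\textbf{Step 2 (finiteness of the optimal value).} The optimal value is finite. Indeed, $D^\varepsilon(0,0,0) = \varepsilon - \varepsilon\int e^{-c/\varepsilon}dR > -\infty$. Moreover, by Proposition \ref{prop: weak duality}, $\mathsf{D}^\varepsilon(\mu,\nu) \le \mathsf{T}^\varepsilon(\mu,\nu) \le \int c\, dR < \infty$, using the feasible point $\mu\otimes\nu\in\calQ$ and second moments. Consequently any optimal triplet has $\iota^\varepsilon<\infty$.

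\textbf{Step 3 (strict concavity argument).} Let $(f,g,h)$ and $(\tilde f,\tilde g,\tilde h)$ both be optimal, with combined functions $\varphi$ and $\tilde\varphi$. The midpoint triplet is again admissible, and its combined function is $(\varphi+\tilde\varphi)/2$. The linear part of $D^\varepsilon$ averages exactly. For the exponential part, strict convexity of $t\mapsto e^t$ gives
\[
e^{((\varphi+\tilde\varphi)/2 - c)/\varepsilon} \le \tfrac12\big(e^{(\varphi-c)/\varepsilon} + e^{(\tilde\varphi-c)/\varepsilon}\big),
\]
with strict inequality wherever $\varphi\neq\tilde\varphi$; both right-hand integrals are finite by Step 2. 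So if $R(\varphi\ne\tilde\varphi)>0$, the midpoint would have a strictly larger dual value than the common optimal value, which is a contradiction. Therefore $\varphi=\tilde\varphi$ $R$-a.e.

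\textbf{Step 4 (separating the components).} Set $F:=\tilde f - f$, $G := \tilde g - g$ and $H := \tilde h - h$. Then
\[
F(u) + \langle G(u),x\rangle + H(x,y) = 0 \quad \text{for $\mu\otimes\nu$-a.e. $(u,x,y)$.}
\]
All terms are $R$-integrable, so Fubini applies.
\begin{itemize}
\item Integrating in $u$ against $\mu$ gives $H(x,y) = -a - \langle v, x\rangle$ for $\nu$-a.e. $(x,y)$, where $a:=\int F\,d\mu$ and $v := \int G\, d\mu$.
\item Substituting back, for $\mu$-a.e. $u$ we have $(F(u)-a) + \langle G(u)-v, X\rangle = 0$ almost surely.
\item Taking expectations and using $\E[X]=0$ yields $F(u)=a$.
\item Then multiplying $\langle G(u)-v, X\rangle = 0$ by $X$ and taking expectations (legitimate since $\E\|X\|^2<\infty$) gives $\E[XX^\top](G(u)-v)=0$. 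Hence $G(u)=v$ by Assumption \ref{asmp: dual}~(i).
\end{itemize}
This is exactly the claimed affine-shift relation.

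The only delicate point is Step 3. One must ensure the dual values are finite, which is why weak duality is invoked in Step 2, so that strict convexity of the exponential actually produces a strict increase rather than an $\infty-\infty$ comparison. The measure-theoretic separation in Step 4 is routine once integrability is in hand.
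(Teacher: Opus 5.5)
Your proof is correct and follows essentially the same route as the paper. Strict convexity of the exponential forces $f+\langle g,x\rangle+h=\tilde f+\langle \tilde g,x\rangle+\tilde h$ $R$-a.e., and the components are then separated using $\E[X]=0$ and Assumption~\ref{asmp: dual}~(i). The differences are minor: you average over $u$ and use invertibility of $\E[XX^\top]$ directly where the paper fixes a point $u_0$ and appeals to Lemma~\ref{lem: nondegeneracy}, and you spell out the finiteness of the dual values needed for the strict midpoint inequality, which the paper leaves implicit.
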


The following proposition is a converse to Theorem \ref{thm: duality}~(ii). 

\begin{proposition}
\label{prop: converse}
Suppose that $\pi$ is feasible for the primal problem (\ref{eq: entropicPrimal}) (i.e., $\pi \in \calQ$)  and of the form 
\begin{equation}
\frac{d\pi}{dR}(u,x,y) = e^{\frac{1}{\varepsilon}(f(u)+\langle g(u),x\rangle+h(x,y)-c(u,y))}
\label{eq: exponential}
\end{equation}
for some $(f,g,h) \in L^1(\mu) \times L^1(\mu;\R^{d_x}) \times L^1(\nu)$. Then $\pi$ is optimal for (\ref{eq: entropicPrimal}) and hence $\pi = \pi^\varepsilon$. 
\end{proposition}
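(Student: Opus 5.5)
The plan is to show directly that $\pi$ attains the weak-duality bound of Proposition \ref{prop: weak duality}. If the primal value of $\pi$ equals $D^\varepsilon(f,g,h)$, then
\[
\mathsf{T}^\varepsilon(\mu,\nu) \le \int c\,d\pi + \varepsilon\KL{\pi}{R} = D^\varepsilon(f,g,h) \le \mathsf{D}^\varepsilon(\mu,\nu) \le \mathsf{T}^\varepsilon(\mu,\nu).
\]
So $\pi$ is optimal, and the uniqueness part of Proposition \ref{prop: primal attainment} gives $\pi = \pi^\varepsilon$.

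\textbf{The identity.} First, since $\pi$ is a probability measure with the density (\ref{eq: exponential}), we have $\int e^{\frac{1}{\varepsilon}(f+\langle g,x\rangle+h-c)}\,dR = 1$. Hence $\iota^\varepsilon(f,g,h)=0$ and $D^\varepsilon(f,g,h) = \int f\,d\mu + \int h\,d\nu$.

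Next, taking logarithms of the density,
\[
\varepsilon \log \frac{d\pi}{dR} = f(u) + \langle g(u),x\rangle + h(x,y) - c(u,y).
\]
Formally integrating against $\pi$ gives
\[
\varepsilon\KL{\pi}{R} + \int c\,d\pi = \int f\,d\mu + \int \langle g(u),x\rangle\,d\pi + \int h\,d\nu,
\]
using that the marginals of $\pi$ are $\mu$ and $\nu$. It remains to show that the middle term vanishes. Disintegrating as in (\ref{eq: rcd}), it equals $\int \langle g(u), \int x\,d\pi_u\rangle\,d\mu(u)$, which is $0$ by the mean-independence constraint $\pi\in\calQ$.

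\textbf{Main obstacle: integrability.} The formal computation requires each term to be $\pi$-integrable, so that the integral of the sum splits. The terms $f$ and $h$ are fine, since $f\in L^1(\mu)$, $h\in L^1(\nu)$, and $\pi$ has marginals $\mu,\nu$. The term $c$ is in $L^1(\pi)$ by the second moment assumptions on $\mu$ and $\nu$.

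The delicate term is $\langle g(u),x\rangle$. We only know $g\in L^1(\mu)$ and $x\in L^2(\nu)$, which does not give $\langle g(u),x\rangle \in L^1(\pi)$ for a general coupling. Two routes are available:
\begin{itemize}
\item[(a)] Reuse the argument noted before Proposition \ref{prop: weak duality}: there it is shown that $\langle g(u),x\rangle\in L^1(\pi)$ whenever $\pi\in\calQ$, $\KL{\pi}{R}<\infty$ and $\iota^\varepsilon(f,g,h)<\infty$. The last condition holds here since $\iota^\varepsilon(f,g,h)=0$.
\item[(b)] Argue directly, without assuming $\KL{\pi}{R}<\infty$ in advance.
\end{itemize}

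For route (b), use the entropy inequality $ab \le e^{a} + b\log b - b$ with $b = d\pi/dR$, which controls the positive part of $\log(d\pi/dR)$. The negative part of $b\log b$ is bounded below by $-1/e$, so it is harmless. This gives $\KL{\pi}{R}<\infty$, and then $\langle g(u),x\rangle = \varepsilon\log(d\pi/dR) - f - h + c$ is bounded above by an integrable function.

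To upgrade this to absolute integrability, I would rerun the weak-duality integrability argument applied to $\pi$ itself. If that proves cumbersome, a fallback is to truncate: apply the constraint to $\langle g(u)\mathbbm{1}_{\|g(u)\|\le n},x\rangle$, which is integrable because $x\in L^1(\pi)$, so its $\pi$-integral is $0$ by $\pi\in\calQ$. Then pass $n\to\infty$ via the identity above, using monotone convergence on the sets where the integrand has a fixed sign.

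Once the middle term is shown to vanish, the primal value of $\pi$ equals $D^\varepsilon(f,g,h)$, and the argument of the first paragraph concludes.
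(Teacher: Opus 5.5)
\textbf{Gap: the integrability of $\langle g(u),x\rangle$ under $\pi$, which is the only nontrivial step, is not established.}

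Your overall structure matches the paper: $\iota^\varepsilon(f,g,h)=0$, then primal value of $\pi$ equals $\int f\,d\mu+\int h\,d\nu$, then the weak-duality sandwich, then uniqueness. The problem lies in the integrability step.

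\begin{itemize}
\item Route (a) presupposes $\KL{\pi}{R}<\infty$.
\item In route (b), the sentence ``This gives $\KL{\pi}{R}<\infty$'' does not follow. The inequality $ab\le e^a+b\log b-b$ has $b\log b$ on the larger side. Integrated, it bounds $\int a\,d\pi$ above by $\KL{\pi}{R}$, so it only ever yields lower bounds on $\KL{\pi}{R}$.
\item The argument is in fact circular. Since $f$, $h$, $c$ are $\pi$-integrable, $\KL{\pi}{R}<\infty$ is equivalent to $(\langle g(u),x\rangle)^+\in L^1(\pi)$, which is exactly what you need to prove.
\item The truncation fallback does not close this. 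It gives $\int(\langle g(u),x\rangle)^+\,d\pi=\int(\langle g(u),x\rangle)^-\,d\pi$, but both sides may a priori be $+\infty$.
\end{itemize}

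\textbf{The fix.} Use the fact you already noted, but for the opposite one-sided bound. With $b=d\pi/dR$, $b\log b\ge -e^{-1}$ gives $\int(\log b)^-\,d\pi=\int(b\log b)^-\,dR\le e^{-1}$. Since $c\ge 0$, $(f+h-c)^+\le f^++h^+$, and this is $\pi$-integrable by the marginal constraints. Hence
\[
(\langle g(u),x\rangle)^-\le \varepsilon\Big(\log\frac{d\pi}{dR}\Big)^- + \big(f(u)+h(x,y)-c(u,y)\big)^+\in L^1(\pi),
\]
so $\langle g(u),x\rangle$ is bounded \emph{below}, not above, by a $\pi$-integrable function.

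Next, run the mean-independence argument from the weak-duality proof with the positive and negative parts swapped. For $\mu$-a.e.\ $u$, $\int\|x\|\,d\pi_u<\infty$ and $\int\langle g(u),x\rangle\,d\pi_u=0$, so $\int(\langle g(u),x\rangle)^+\,d\pi_u=\int(\langle g(u),x\rangle)^-\,d\pi_u$. Integrating against $\mu$ gives $(\langle g(u),x\rangle)^+\in L^1(\pi)$; your truncation identity would serve equally well here.

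Then $\langle g(u),x\rangle\in L^1(\pi)$ with $\pi$-integral $0$, and $\KL{\pi}{R}<\infty$ follows from the log-density identity. Your closing sandwich and the uniqueness step then go through unchanged. This is precisely the paper's proof.
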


Combining Theorem \ref{thm: duality}~(ii) and the preceding proposition, we obtain the following characterization of dual potentials, which is akin to the Schr\"{o}dinger system in entropic OT.
\begin{corollary}
Suppose that Assumption \ref{asmp: dual} holds.
For given $(f,g,h) \in L^1(\mu) \times L^1(\mu;\R^{d_x}) \times L^1(\nu)$, they solve the dual problem (\ref{eq: dual}) if and only if they satisfy
\begin{align}
&f(u) = -\varepsilon \log \int_{\R^{d_x+d_y}} e^{\frac{1}{\varepsilon}(\langle g(u), x\rangle + h(x,y) -c(u,y))} \,d\nu(x,y)\quad \text{$\mu$-a.e. $u$}, \label{eq: f}\\
&\int_{\R^{d_x+d_y}} x \cdot e^{\frac{1}{\varepsilon}(\langle g(u), x\rangle + h(x,y) -c(u,y))} d\nu(x,y) = 0 \quad \text{$\mu$-a.e. $u$}, \label{eq: g} \\
&
h(x,y) = -\varepsilon \log \int_{\R^{d_y}} e^{\frac{1}{\varepsilon}(f(u) + \langle g(u), x\rangle  -c(u,y))}\, d\mu(u) \quad \text{$\nu$-a.e. $(x,y)$.} \label{eq: h}
\end{align}
\end{corollary}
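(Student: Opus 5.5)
The corollary follows by combining Theorem \ref{thm: duality}, Proposition \ref{prop: uniqueness}, and Proposition \ref{prop: converse}. The key observation is that the three equations (\ref{eq: f})--(\ref{eq: h}) say exactly two things about $\pi$ defined by (\ref{eq: exponential}). First, $\pi$ is a probability measure with marginals $\mu$ and $\nu$. Second, $\pi$ satisfies the mean-independence constraint.

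\emph{Sufficiency.} Suppose $(f,g,h)$ satisfies (\ref{eq: f})--(\ref{eq: h}), and define $\pi$ by $d\pi/dR = e^{\frac{1}{\varepsilon}(f+\langle g,x\rangle+h-c)}$. Equation (\ref{eq: f}) says that for $\mu$-a.e.\ $u$, $\int e^{\frac{1}{\varepsilon}(f(u)+\langle g(u),x\rangle+h-c)}\,d\nu = 1$. By Tonelli, the first marginal of $\pi$ is therefore $\mu$, and $\pi_u$ has $\nu$-density $e^{\frac{1}{\varepsilon}(f(u)+\langle g(u),x\rangle+h-c)}$. Equation (\ref{eq: h}) similarly gives second marginal $\nu$, so $\pi \in \Pi(\mu,\nu)$. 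Multiplying (\ref{eq: g}) by $e^{f(u)/\varepsilon}$ yields $\int x\,d\pi_u = 0$ for $\mu$-a.e.\ $u$; here $\int\|x\|\,d\pi_u<\infty$ holds for a.e.\ $u$ because $\int \|x\|\,d\pi = \int\|x\|\,d\nu<\infty$. Hence $\pi\in\calQ$, and Proposition \ref{prop: converse} gives $\pi=\pi^\varepsilon$.

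It remains to check that $(f,g,h)$ is dual optimal. The argument of Proposition \ref{prop: converse} (or a direct computation) shows $\iota^\varepsilon(f,g,h)=0$, since $\pi$ has mass one. Then
\[
D^\varepsilon(f,g,h)=\int f\,d\mu+\int h\,d\nu=\int (f+\langle g,x\rangle+h)\,d\pi .
\]
This uses $\int \langle g(u),x\rangle\,d\pi=0$, which follows from the constraint; integrability of $\langle g(u),x\rangle$ under $\pi$ is supplied by the weak duality argument (Proposition \ref{prop: weak duality}). Finally, $\int (f+\langle g,x\rangle+h)\,d\pi = \int c\,d\pi+\varepsilon\KL{\pi}{R}$, since $\varepsilon\log(d\pi/dR) = f+\langle g,x\rangle+h-c$. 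Thus $D^\varepsilon(f,g,h)=\mathsf{T}^\varepsilon(\mu,\nu)$, so by weak duality $(f,g,h)$ attains the supremum.

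\emph{Necessity.} Let $(f,g,h)$ be dual optimal. By Theorem \ref{thm: duality}~(ii) there is an optimal triplet $(f^\varepsilon,g^\varepsilon,h^\varepsilon)$ generating $\pi^\varepsilon$ via (\ref{eq: density}). Reversing the computations above on $\pi^\varepsilon$ shows that this triplet satisfies (\ref{eq: f})--(\ref{eq: h}): the marginal conditions give (\ref{eq: f}) and (\ref{eq: h}), and the constraint $\int x\,d\pi^\varepsilon_u=0$ gives (\ref{eq: g}) after dividing by $e^{f^\varepsilon(u)/\varepsilon}$.

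By Proposition \ref{prop: uniqueness}, which uses Assumption \ref{asmp: dual}~(i), there are $a\in\R$ and $v\in\R^{d_x}$ with $f=f^\varepsilon+a$, $g=g^\varepsilon+v$, and $h=h^\varepsilon-a-\langle v,x\rangle$. Under this shift the exponent $f+\langle g,x\rangle+h$ is unchanged. Substituting into (\ref{eq: f})--(\ref{eq: h}), the terms $a$ and $\langle v,x\rangle$ cancel inside each exponent, against the corresponding shift on the left-hand side in (\ref{eq: f}) and (\ref{eq: h}). In (\ref{eq: g}) the integrand changes only by the constant factor $e^{-a/\varepsilon}$, so it still vanishes. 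Hence $(f,g,h)$ satisfies the system.

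The only delicate point is the integrability bookkeeping in the sufficiency direction: showing that $\langle g(u),x\rangle\in L^1(\pi)$ and that Fubini may be applied to split $\int(f+\langle g,x\rangle+h)\,d\pi$. I would handle this exactly as in the proof of weak duality, where the paper flags this integrability as the nonstandard step.
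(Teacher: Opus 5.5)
Your proposal is correct and is essentially the paper's argument. Equations (\ref{eq: f}) and (\ref{eq: h}) are the marginal constraints and (\ref{eq: g}) is the mean-independence constraint for the measure in (\ref{eq: exponential}), so sufficiency follows from Proposition \ref{prop: converse} with weak duality, and necessity from Theorem \ref{thm: duality}~(ii); your explicit use of Proposition \ref{prop: uniqueness} to pass from the particular potentials $(f^\varepsilon,g^\varepsilon,h^\varepsilon)$ to an arbitrary dual optimizer fills in a step the paper's one-line proof leaves implicit.
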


Indeed, let $\pi$ be a Borel measure on $\R^{d_y} \times \R^{d_x+d_y}$ of the form (\ref{eq: exponential}). 
Equations (\ref{eq: f}) and (\ref{eq: h}) are equivalent to $\pi$ being a coupling for $(\mu,\nu)$. Equation (\ref{eq: g}) is equivalent to $\pi$ satisfying the mean-independence constraint in (\ref{eq: entropicPrimal}). Hence, the conclusion follows from Theorem \ref{thm: duality}~(ii) and Proposition \ref{prop: converse}.

\begin{remark}
\label{rem: Schrodinger}
One can choose versions of dual potentials $(f^\varepsilon,g^\varepsilon,h^\varepsilon)$ so that (\ref{eq: f}) and (\ref{eq: h}) hold for \textit{all} $u \in \R^{d_y}$ and $(x,y) \in \R^{d_x+d_y}$, respectively. Indeed, for given dual potentials $(f^\varepsilon,g^\varepsilon,h^\varepsilon)$, define $\tilde{f}^\varepsilon$ and $\tilde{h}^\varepsilon$ by the right-hand sides on (\ref{eq: f}) and (\ref{eq: h}), respectively, then $\tilde{f}^\varepsilon = f^\varepsilon$ $\mu$-a.e. and $\tilde{h}^\varepsilon = h^\varepsilon$ $\nu$-a.e. Both $\tilde{f}^\varepsilon$ and $\tilde{h}^\varepsilon$ may take $-\infty$, but they are integrable under $\mu$ and $\nu$, respectively, and are finite $\mu$-a.e. and $\nu$-a.e., respectively. 
However, it is unclear whether one can choose a version of $g^\varepsilon$ so that (\ref{eq: g}) holds for all $u \in \R^{d_y}$.
Proposition \ref{prop: exponential} below shows that one can choose such a version of $g^\varepsilon$ at least when $\mu$ and $\nu$ are compactly supported. 
\end{remark}

\subsection{Regularity of dual potentials}
\label{sec: regularity}

In this section, we assume the existence of dual potentials satisfying the Schr\"{o}dinger-like system (\ref{eq: f})--(\ref{eq: h}) and study their regularity. Throughout this section, we choose versions of dual potentials $(f^\varepsilon,g^\varepsilon,h^\varepsilon)$ so that (\ref{eq: f}) and (\ref{eq: h}) hold for \textit{all} $u \in \R^{d_y}$ and $(x,y) \in \R^{d_x+d_y}$, respectively; see Remark \ref{rem: Schrodinger}. Furthermore, let $\calU \subset \R^{d_y}$ denote the support of $\mu$, and let $\calX \subset \R^{d_x},\calY \subset \R^{d_y}$ denote the supports of $X,Y$, respectively. We will assume throughout this section that $\calU$ is compact.  

The first two propositions concern regularity of $h^\varepsilon$. 
\begin{proposition}
\label{prop: continuity}
Suppose that $\calU$ is compact. Pick any $x \in \R^{d_x}$ and $y_1,y_2 \in \R^{d_y}$. If at least one of $h^\varepsilon(x, y_1)$ or $h^\varepsilon(x, y_2)$ is finite, then both are finite and 
\[
|h^\varepsilon(x, y_1) - h^\varepsilon(x, y_2)| \le \sup_{u\in \calU }| c(u,y_1) - c(u,y_2)|.
\]
\end{proposition}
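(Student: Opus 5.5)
The plan is to work directly from the representation (\ref{eq: h}), which by our choice of versions holds for \emph{all} $(x,y)$:
\[
e^{-h^\varepsilon(x,y)/\varepsilon} = \int_{\calU} e^{\frac{1}{\varepsilon}(f^\varepsilon(u)+\langle g^\varepsilon(u),x\rangle - c(u,y))}\,d\mu(u) =: I(x,y) \in (0,\infty].
\]
Since $\mu$ is concentrated on $\calU$, the integral may be taken over $\calU$. Set $\delta := \sup_{u\in\calU}|c(u,y_1)-c(u,y_2)|$. This is finite because $\calU$ is compact and $c(\cdot,y)$ is continuous. Note also that $I(x,y)>0$ always: the integrand is strictly positive wherever $f^\varepsilon(u)>-\infty$, and $f^\varepsilon$ is finite $\mu$-a.e. (it lies in $L^1(\mu)$, and the everywhere version agrees with it $\mu$-a.e.). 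Consequently $h^\varepsilon(x,y) \in [-\infty,\infty)$, and $h^\varepsilon(x,y)$ is finite exactly when $I(x,y)<\infty$.

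The key step is the pointwise comparison of integrands. For every $u\in\calU$,
\[
e^{-c(u,y_1)/\varepsilon} \le e^{\delta/\varepsilon} e^{-c(u,y_2)/\varepsilon}.
\]
Multiplying by the common nonnegative factor $e^{(f^\varepsilon(u)+\langle g^\varepsilon(u),x\rangle)/\varepsilon}$ and integrating against $\mu$ gives $I(x,y_1)\le e^{\delta/\varepsilon}I(x,y_2)$. By symmetry we also get $I(x,y_2)\le e^{\delta/\varepsilon}I(x,y_1)$. These inequalities hold in $[0,\infty]$ with no integrability assumptions, since everything is nonnegative.

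It follows that $I(x,y_1)<\infty$ if and only if $I(x,y_2)<\infty$. So if one of $h^\varepsilon(x,y_1)$, $h^\varepsilon(x,y_2)$ is finite, both are. In that case both $I$ values lie in $(0,\infty)$, and taking $-\varepsilon\log$ yields $|h^\varepsilon(x,y_1)-h^\varepsilon(x,y_2)|\le\delta$.

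The only delicate point is bookkeeping with infinite values: positivity of $I$, so that $h^\varepsilon<+\infty$, and restricting the integral to $\calU$, so that the supremum over $\calU$ suffices. Neither is a real obstacle.
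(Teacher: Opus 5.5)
Your proof is correct and follows essentially the same route as the paper: both compare the integrands in the representation (\ref{eq: h}) pointwise, bounding the factor $e^{(c(u,y_1)-c(u,y_2))/\varepsilon}$ by $e^{\delta/\varepsilon}$ on $\calU$, and then take $-\varepsilon\log$. Your symmetric treatment in $[0,\infty]$, including the explicit check that the integral is strictly positive so that $h^\varepsilon<+\infty$, spells out a point the paper leaves implicit when it assumes $h^\varepsilon(x,y_1)\ge h^\varepsilon(x,y_2)$.
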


\begin{remark}
Since $h^\varepsilon$ is finite $\nu$-a.e., with $\kappa$ denoting the distribution of $X$, the proposition implies that for $\kappa$-a.e. $x$, $h^\varepsilon(x,\cdot)$ is everywhere finite and locally
Lipschitz. 
\end{remark}

\begin{proposition}
\label{prop: convexity}
Suppose that Assumption \ref{asmp: dual}~(i) holds and that $\calU$ is compact. 
For any fixed $y \in \R^{d_y}$, $h^\varepsilon(x, y)$ is concave in $x$.
Furthermore, the function $x\mapsto h^\varepsilon(x, y)$ is finite and locally Lipschitz on the interior of the convex hull of $\calX$. 
\end{proposition}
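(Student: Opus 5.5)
Concavity comes from writing $h^\varepsilon(\cdot,y)$ as $-\varepsilon$ times a log-Laplace transform in $x$, so I would start from (\ref{eq: h}), which by our choice of versions holds for every $(x,y)$:
\[
h^\varepsilon(x,y) = -\varepsilon \log \int_{\R^{d_y}} e^{\frac{1}{\varepsilon}\langle g^\varepsilon(u), x\rangle}\, e^{\frac{1}{\varepsilon}(f^\varepsilon(u) - c(u,y))}\, d\mu(u).
\]
Fix $y$ and introduce the finite positive measure $m_y(du) := e^{(f^\varepsilon(u)-c(u,y))/\varepsilon}\,d\mu(u)$ on $\calU$. Then $e^{-h^\varepsilon(x,y)/\varepsilon} = \int e^{\langle g^\varepsilon(u), x\rangle/\varepsilon}\, dm_y(u)$ is the Laplace transform of the pushforward of $m_y$ under $g^\varepsilon/\varepsilon$, evaluated at $x$.

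First I would check that $m_y$ is a genuine finite, nonzero measure. By Remark \ref{rem: Schrodinger}, $f^\varepsilon$ is finite $\mu$-a.e., so $m_y \neq 0$. For finiteness, $c(\cdot,y)$ is bounded on the compact set $\calU$, so it suffices that $\int e^{f^\varepsilon/\varepsilon}\,d\mu < \infty$. This follows from $\pi^\varepsilon$ being a coupling and Fubini, or one can simply absorb $e^{f^\varepsilon/\varepsilon}$ into the integrand; if the total mass is $+\infty$ the concavity statement still holds in the extended sense.

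Hölder's inequality then gives convexity of $x\mapsto \log\int e^{\langle g^\varepsilon(u),x\rangle/\varepsilon}\,dm_y(u)$ as an extended-real function with values in $(-\infty,\infty]$. It is never $-\infty$ because the integrand is strictly positive and $m_y\neq 0$. Hence $h^\varepsilon(\cdot,y)$ is concave with values in $[-\infty,\infty)$.

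For the second claim I would show that the convex function $\Lambda_y(x) := \log\int e^{\langle g^\varepsilon,x\rangle/\varepsilon}\,dm_y$ is finite on $\mathrm{int}\,\mathrm{conv}(\calX)$. A finite convex function on an open convex set is locally Lipschitz, so this gives the result.

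\begin{enumerate}
\item[(a)] For $\kappa$-a.e.\ $x$, $\Lambda_y(x)<\infty$ for \emph{every} $y$. Since $h^\varepsilon$ is finite $\nu$-a.e., for $\kappa$-a.e.\ $x$ there is some $y'$ with $h^\varepsilon(x,y')$ finite. Proposition \ref{prop: continuity} then gives finiteness at $(x,y)$, since $\sup_{u\in\calU}|c(u,y)-c(u,y')|<\infty$ by compactness of $\calU$.
\item[(b)] The set $\{x:\Lambda_y(x)<\infty\}$ is convex and contains a $\kappa$-full set $A$. I would argue that $\mathrm{conv}(A)\supseteq \mathrm{int}\,\mathrm{conv}(\calX)$.
\end{enumerate}

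Step (b) is the main obstacle. Full measure need not be dense in the naive sense, but $\calX$ is the support of $\kappa$, so $A$ is dense in $\calX$. For a point $z$ in the interior of $\mathrm{conv}(\calX)$, pick finitely many points of $\calX$ whose convex hull contains a neighborhood of $z$. This uses Carathéodory and openness, and Assumption \ref{asmp: dual}~(i) guarantees that $\mathrm{conv}(\calX)$ has nonempty interior, so the statement is not vacuous. Perturb those points slightly to points of $A$ near them; the perturbed hull still contains $z$ by continuity of barycentric coordinates for affinely independent vertices. Hence $z\in\mathrm{conv}(A)$, so $\Lambda_y(z)<\infty$ and $h^\varepsilon(z,y)$ is finite.

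Local Lipschitzness on the open convex set then follows from the standard fact that a convex function finite on an open convex set in $\R^{d_x}$ is locally Lipschitz.
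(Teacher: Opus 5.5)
Your proposal is correct and follows the paper's proof in all essentials. H\"older's inequality on the log-Laplace representation from (\ref{eq: h}) gives concavity, Proposition~\ref{prop: continuity} gives finiteness for $\kappa$-a.e.\ $x$, and local Lipschitzness of a convex function finite on an open convex set finishes the argument. The only variation is in showing that the convex finiteness domain $D$ contains $\mathrm{int}\,\mathrm{conv}(\calX)$. The paper observes $\mathrm{conv}(\calX)\subset\bar D$ and uses $\mathrm{int}\,D=\mathrm{int}\,\bar D$ for convex $D$ with nonempty interior, obtaining the nonempty interior from Assumption~\ref{asmp: dual}~(i) and $\E[X]=0$; your Carath\'eodory-plus-perturbation argument proves this inclusion directly, without invoking Assumption~\ref{asmp: dual}~(i) at that step.
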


For the rest of this section, we will assume that $\calX$ and $\calY$ are both compact, in addition to compactness of $\calU$. The next proposition shows that one can choose a version of $g^\varepsilon$ so that (\ref{eq: g}) holds for all $u$. The proof relies on theory of exponential families (cf. Chapter 3 of \cite{wainwright2008graphical} and \cite{barndorff2014information}).

\begin{proposition}
\label{prop: exponential}
Suppose that Assumption \ref{asmp: dual}~(i) holds  and that $\calU, \calX$, and $\calY$ are compact. 
For every $u \in \R^{d_y}$, there exists a unique  $\theta \in \R^{d_x}$ such that 
\begin{equation}
\int x \cdot \exp\left(\frac{ \langle \theta,x \rangle+ h^\varepsilon(x,y) - c(u,y)}{\varepsilon}\right) d\nu(x,y) = 0.
\label{eq: implicit}
\end{equation}
\end{proposition}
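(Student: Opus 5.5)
For each fixed $u$, I would study the log-partition function
\[
A_u(\theta) := \varepsilon \log \int \exp\left(\frac{\langle \theta,x\rangle + h^\varepsilon(x,y) - c(u,y)}{\varepsilon}\right) d\nu(x,y), \qquad \theta \in \R^{d_x},
\]
and treat (\ref{eq: implicit}) as the first-order condition $\nabla A_u(\theta) = 0$. Equivalently, I would view it as finding the member of an exponential family with mean $0$.

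\emph{Finiteness.} Set $\lambda_u(dx,dy) := e^{(h^\varepsilon(x,y) - c(u,y))/\varepsilon}\, d\nu(x,y)$. I first check that $\lambda_u$ is a finite nonzero measure.
\begin{itemize}
\item \emph{Finite.} Since $h^\varepsilon$ satisfies (\ref{eq: h}) everywhere, $e^{h^\varepsilon/\varepsilon}$ is the reciprocal of $\int e^{(f^\varepsilon + \langle g^\varepsilon, x\rangle - c)/\varepsilon}\, d\mu$. For a.e.\ $u$, the identity (\ref{eq: f}) gives $\int e^{(\langle g^\varepsilon(u),x\rangle + h^\varepsilon - c(u,\cdot))/\varepsilon}\, d\nu = e^{-f^\varepsilon(u)/\varepsilon} < \infty$.
\item \emph{Transfer to all $u$.} Because $\calX$ is compact, the factor $e^{\langle \theta,x\rangle/\varepsilon}$ is bounded above and below on $\calX$. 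Because $\calU$ and $\calY$ are compact, the ratios $e^{(c(u',y) - c(u,y))/\varepsilon}$ are bounded on $\calY$. Hence $\int e^{(\langle\theta,x\rangle + h^\varepsilon - c(u,y))/\varepsilon}\, d\nu$ is finite for every $\theta \in \R^{d_x}$ and every $u \in \R^{d_y}$, provided it is finite for one pair $(\theta, u')$.
\item \emph{Nonzero.} $\lambda_u$ has positive mass because $h^\varepsilon > -\infty$ $\nu$-a.e.
\end{itemize}

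\emph{Convexity and differentiability.} By compactness of $\calX$, differentiation under the integral is justified, so $A_u$ is finite, $C^\infty$ and convex on all of $\R^{d_x}$. Its gradient $\nabla A_u(\theta)$ is exactly the mean of $x$ under the tilted probability $P_\theta \propto e^{\langle\theta,x\rangle/\varepsilon}\lambda_u$. Its Hessian is $\varepsilon^{-1}\mathrm{Cov}_{P_\theta}(x)$.

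\emph{Uniqueness.} Since $P_\theta$ and $\lambda_u$ are equivalent to $\nu$ on the set where $h^\varepsilon$ is finite, which has full $\nu$-measure, the $x$-marginal of $P_\theta$ is equivalent to the law $\kappa$ of $X$. By Assumption \ref{asmp: dual}~(i), $\kappa$ is not supported on any affine hyperplane, so the covariance is positive definite. Thus $A_u$ is strictly convex, and a solution of $\nabla A_u = 0$ is unique if it exists.

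\emph{Existence.} It suffices to show that $A_u$ is coercive; then a minimizer exists and satisfies $\nabla A_u = 0$. Standard exponential family theory says the gradient map is onto the interior of the convex support. So the key point is that $0$ lies in the interior of the convex hull of the support of $\kappa$. I would verify coercivity directly.
\begin{itemize}
\item Fix a unit vector $v$. Because $\E[X] = 0$ and $X$ is not concentrated on the hyperplane $\{\langle v, x\rangle = 0\}$ (by (i)), we have $\kappa(\langle v,x\rangle > \delta) > 0$ for some $\delta > 0$.
\item By compactness of the unit sphere and lower semicontinuity, $\delta$ and the mass $\kappa(\langle v,x\rangle > \delta)$ can be taken uniform in $v$, possibly after shrinking $\delta$.
\item Since $\lambda_u \sim \nu$, it follows that $A_u(tv) \ge t\delta + \varepsilon\log \lambda_u(\langle v,x\rangle > \delta) \to \infty$ uniformly in $v$ as $t \to \infty$.
\end{itemize}

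I expect the main obstacle to be the uniform positivity in this last step. Establishing $\inf_{\|v\|=1} \lambda_u(\langle v,x\rangle>\delta) > 0$ requires a compactness argument over directions: by contradiction, extract a convergent sequence of directions and use the portmanteau theorem.

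Alternatively, I could avoid uniformity altogether. A convex function that tends to $+\infty$ along every ray has bounded sublevel sets. Here each ray limit is $+\infty$ because $\langle v, x\rangle$ has zero mean and is nondegenerate under $\kappa$, so it takes positive values with positive $\lambda_u$-mass.
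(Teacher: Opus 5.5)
Your proposal is correct and follows essentially the same route as the paper. Both arguments have the same three steps. First, the tilted base measure is shown to be finite for every $u$, using one point where the potentials are finite together with compactness of $\calX$ and $\calY$. Second, minimality of the exponential family (from Assumption~\ref{asmp: dual}~(i)) gives uniqueness. Third, $\E[X]=0$ together with nondegeneracy of $X$ places $0$ in the range of $\nabla A_u$, which gives existence.

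The differences are minor. You verify directly, via strict convexity plus coercivity of $A_u$, what the paper obtains by citing the Wainwright--Jordan result that $\nabla A$ is a bijection onto the interior of $\calM$, combined with a separating-hyperplane lemma showing $0\in\operatorname{int}\calM$. For the finiteness check you use (\ref{eq: f}) at a point where $f^\varepsilon(u)>-\infty$, whereas the paper uses the conditional law $\pi_{u_0}$.
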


The preceding proposition guarantees that, given $h^\varepsilon$, one can choose a version of $g^\varepsilon$ such that it is everywhere finite and  (\ref{eq: g}) holds for \textit{all} $u \in \R^{d_y}$. 
It is not difficult to verify that $f^\varepsilon$ and $h^\varepsilon$ are everywhere finite. Furthermore, the next theorem establishes that $(f^\varepsilon,g^\varepsilon,h^\varepsilon)$ are real analytic. 
We refer the reader to \cite{krantz2002primer} as an excellent reference on theory of real analytic functions.

\begin{theorem}[Real analyticity of dual potentials]
\label{thm: analytic}
Suppose that Assumption \ref{asmp: dual}~(i) holds  and that $\calU, \calX$, and $\calY$ are compact.  Then: (i) $f^\varepsilon$ and $g^\varepsilon$ are real analytic functions on $\R^{d_y}$ (more precisely, each coordinate of $g^\varepsilon$ is real analytic), and (ii) $h^\varepsilon$ is a real analytic function on $\R^{d_x+d_y}$.
\end{theorem}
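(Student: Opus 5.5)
The plan is to treat $g^\varepsilon$ as an implicitly defined function through the exponential-family structure, get its analyticity from the real-analytic implicit function theorem, and then read off analyticity of $f^\varepsilon$ and $h^\varepsilon$ from the Schr\"{o}dinger-like equations (\ref{eq: f}) and (\ref{eq: h}), viewed as parameter integrals over compact sets.

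First, fix the versions from Remark \ref{rem: Schrodinger} and Proposition \ref{prop: exponential}. Since $\calX,\calY$ are compact, $h^\varepsilon$ is bounded on $\calX\times\calY$: it is finite $\nu$-a.e. and locally Lipschitz by Propositions \ref{prop: continuity}--\ref{prop: convexity}, and (\ref{eq: h}) with compact $\calU$ gives finite two-sided bounds. Define
\[
A(\theta,u) := \log \int e^{\frac{1}{\varepsilon}(\langle \theta,x\rangle + h^\varepsilon(x,y) - c(u,y))}\, d\nu(x,y), \quad (\theta,u)\in \R^{d_x}\times\R^{d_y}.
\]
The integrand is entire in the complexified variables $(\theta,u)\in\mathbb{C}^{d_x+d_y}$, because $c(u,y)$ is a polynomial in $u$. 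Its modulus is bounded locally uniformly over the compact support of $\nu$, so by dominated convergence and Morera/Hartogs the integral is holomorphic. The integral is strictly positive for real arguments, so $A$ is real analytic on $\R^{d_x+d_y}$.

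Equation (\ref{eq: implicit}) says exactly $\nabla_\theta A(\theta,u)=0$. Moreover, $\nabla_\theta^2 A(\theta,u) = \varepsilon^{-2}\mathrm{Cov}_{\theta,u}(X)$, the covariance of $x$ under the tilted probability measure with density proportional to the integrand. This tilted measure is equivalent to $\nu$, so its $x$-marginal is equivalent to the law of $X$. By Assumption \ref{asmp: dual}~(i), the law of $X$ is not supported on any affine hyperplane, hence the covariance is positive definite. Thus $\theta\mapsto A(\theta,u)$ is strictly convex, and by Proposition \ref{prop: exponential} it has a unique critical point $g^\varepsilon(u)$ for each $u$.

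The real-analytic implicit function theorem (\cite{krantz2002primer}), applied to $F(\theta,u)=\nabla_\theta A(\theta,u)$ with $\partial_\theta F$ invertible at every solution, gives the following. Near each $u_0$ there is a real analytic $\tilde g$ with $F(\tilde g(u),u)=0$ and $\tilde g(u_0)=g^\varepsilon(u_0)$. By uniqueness of the critical point, $\tilde g = g^\varepsilon$ locally, so $g^\varepsilon$ is real analytic on $\R^{d_y}$. This is the step needing care: one must confirm that the nondegeneracy holds at every $u$, not just $\mu$-a.e., which is where invertibility of $\E[XX^\top]$ and the equivalence of the tilted measure to $\nu$ enter. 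Then $f^\varepsilon(u) = -\varepsilon A(g^\varepsilon(u),u)$ by (\ref{eq: f}), a composition of real analytic maps, so $f^\varepsilon$ is real analytic.

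Finally, for $h^\varepsilon$ use (\ref{eq: h}):
\[
h^\varepsilon(x,y) = -\varepsilon\log\int e^{\frac{1}{\varepsilon}(f^\varepsilon(u)+\langle g^\varepsilon(u),x\rangle - c(u,y))}\,d\mu(u).
\]
Because $\calU$ is compact and $f^\varepsilon,g^\varepsilon$ are continuous, hence bounded on $\calU$, the integrand extends to an entire function of complex $(x,y)$, bounded locally uniformly in $u\in\calU$. The integral is therefore holomorphic, and it is positive on real points, so its logarithm is real analytic on $\R^{d_x+d_y}$. The main obstacle I expect is the justification in the $g^\varepsilon$ step: ensuring the global choice of version coincides with the local analytic branch everywhere, and that $h^\varepsilon$ is bounded on $\calX\times\calY$ so the integrals in $A$ are locally uniformly controlled.
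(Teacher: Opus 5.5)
Your proposal is correct and follows essentially the same route as the paper: real analyticity of $A(\theta,u)$, a positive-definite $\theta$-Hessian from Assumption~\ref{asmp: dual}~(i) (the paper phrases this as minimality of the exponential family), the real-analytic implicit function theorem plus the uniqueness in Proposition~\ref{prop: exponential} for $g^\varepsilon$, then $f^\varepsilon(u)=-\varepsilon A(g^\varepsilon(u),u)$, and a Laplace-transform argument for $h^\varepsilon$ once $f^\varepsilon,g^\varepsilon$ are bounded on $\calU$; your holomorphic-extension argument with dominated convergence replaces the paper's citation of Barndorff-Nielsen. One small correction: the two-sided bound on $h^\varepsilon$ over $\calX\times\calY$ is not available at that stage, since Propositions~\ref{prop: continuity}--\ref{prop: convexity} only control $h^\varepsilon$ locally on the interior of the convex hull of $\calX$ and a lower bound would require $f^\varepsilon,g^\varepsilon$ bounded on $\calU$, which is what you are proving; but domination only needs $\int e^{h^\varepsilon/\varepsilon}\,d\nu<\infty$, and this follows either from the Jensen upper bound $h^\varepsilon(x,y)\le -\int f^\varepsilon d\mu-\langle \int g^\varepsilon d\mu, x\rangle+\int c(u,y)\,d\mu(u)$ or, as in the paper, by disintegrating $\pi^\varepsilon$ at a suitable $u_0$.
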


Arguably, the most challenging part is the analysis of $g^{\varepsilon}$. Assume $\varepsilon = 1$ and drop the superscript $\varepsilon$ for simplicity. In view of Proposition \ref{prop: exponential}, $g(u)$ is the unique vector in $\R^{d_x}$ such that $\nabla_\theta A(g(u),u) = 0$, where 
\[
A(\theta,u) = \int e^{\langle \theta,x\rangle + \langle u,y \rangle } \, d\tilde{\nu}(x,y) \quad \text{with} \quad d\tilde{\nu}(x,y) = e^{h(x,y)-\|y\|^2/2} \, d\nu(x,y).
\]
Since $A(\theta,u)$ is the Laplace transform of the base measure $\tilde{\nu}$, it is analytic by \cite[Theorem 2.7.1]{barndorff2014information}. Real analyticity of $g$ follows from applying the real analytic version of the implicit function theorem \cite[Theorem 2.3.5]{krantz2002primer}.

\section{Gaussian case}
\label{sec: gaussian}

\subsection{Closed-form solution}

In this section, we derive closed-form solutions for the entropic VQR problem under Gaussian marginals.

\begin{theorem}
\label{thm: gaussian solution}
Suppose that $\mu = \mathcal{N}(0, I_{d_y})$  and $\nu = \mathcal{N}\big((0^\top, m_Y^\top)^\top, \Sigma\big)$, where $m_Y := \E[Y]$ and the covariance matrix $\Sigma$ is partitioned as
\[
\Sigma = \begin{pmatrix} \Sigma_{XX} & \Sigma_{XY} \\ \Sigma_{YX} & \Sigma_{YY} \end{pmatrix}.
\]
Assume that $\Sigma$ is nonsingular. 
Then the following hold.

\begin{enumerate}
\item[(i)] The optimal coupling $\pi^\varepsilon$ for the entropic VQR problem (\ref{eq: entropicPrimal}) is a nondegenerate multivariate Gaussian distribution $\mathcal{N}(m, \Gamma_\varepsilon)$ with
\[
m := \begin{pmatrix}
0 \\ 0 \\ m_Y
\end{pmatrix} \quad\text{and}\quad \Gamma_\varepsilon := \begin{pmatrix} 
I_{d_y} & O & \Lambda_{\varepsilon} \\ O & \Sigma_{XX} & \Sigma_{XY} \\ \Lambda_{\varepsilon} & \Sigma_{YX} & \Sigma_{YY} \end{pmatrix},
\]
where $\Lambda_{\varepsilon}$ is the $d_y \times d_y$ symmetric positive definite matrix given by
\[
\Lambda_{\varepsilon} := \left(\Sigma_{YY} - \Sigma_{YX}\Sigma_{XX}^{-1}\Sigma_{XY} + \frac{\varepsilon^2}{4}I_{d_y}\right)^{1/2} - \frac{\varepsilon}{2}I_{d_y}.
 \]
\item[(ii)] One can choose versions of dual potentials as
 \begin{equation}
 \begin{aligned}
 f^\varepsilon(u) &= -\frac{1}{2}u^\top (\Lambda_{\varepsilon} - I_{d_y})u - m_Y^\top u - \frac{\varepsilon}{2} \log \det(\varepsilon \Lambda_{\varepsilon} \Omega_{YY}), \\
 g^\varepsilon(u) &= G u, \\
h^\varepsilon(x,y) &= -\frac{1}{2}(G^\top x + y - m_Y)^\top \Psi_\varepsilon (G^\top x + y - m_Y) + \frac{1}{2}\|y\|^2 ,
\end{aligned}
\label{eq: gaussian dual potentials}
\end{equation}
where $G := -\Sigma_{XX}^{-1}\Sigma_{XY}$, $\Psi_\varepsilon := \Lambda_{\varepsilon} \Omega_{YY}$, and $\Omega_{YY} := (\Sigma_{YY} - \Sigma_{YX}\Sigma_{XX}^{-1}\Sigma_{XY})^{-1}$. 
\end{enumerate}
\end{theorem}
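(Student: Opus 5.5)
The plan is to verify the hypotheses of Proposition \ref{prop: converse}: we exhibit a coupling $\pi=\mathcal{N}(m,\Gamma_\varepsilon)$ and check three things. First, $\pi\in\calQ$. Second, $d\pi/dR$ has the exponential form (\ref{eq: exponential}) with the triplet (\ref{eq: gaussian dual potentials}). Third, these potentials lie in the required $L^1$ spaces. Optimality and uniqueness ($\pi=\pi^\varepsilon$) then follow from that proposition. No verification of Assumption \ref{asmp: dual} is needed. Part (ii) then follows from Theorem \ref{thm: duality}, or directly from weak duality (Proposition \ref{prop: weak duality}) together with the identity $\int f\,d\mu+\int h\,d\nu=\int c\,d\pi+\varepsilon\KL{\pi}{R}$, which holds whenever $d\pi/dR$ has the form (\ref{eq: exponential}) and the mean constraint holds.

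\textbf{Feasibility.} The marginals of $\mathcal{N}(m,\Gamma_\varepsilon)$ are $\mu$ and $\nu$ by inspection of the block structure. The mean-independence constraint holds because the $(U,X)$ block of $\Gamma_\varepsilon$ is $O$: jointly Gaussian $U$ and $X$ are then independent, so $\E[\tilde X\mid U]=0$.

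\textbf{Nondegeneracy.} We must show $\Gamma_\varepsilon\succ0$. Write $S:=\Sigma_{YY}-\Sigma_{YX}\Sigma_{XX}^{-1}\Sigma_{XY}\succ0$. Taking the Schur complement with respect to the block $\mathrm{diag}(I,\Sigma_{XX})$ gives $S-\Lambda_\varepsilon^2$. The defining equation of $\Lambda_\varepsilon$ is $(\Lambda_\varepsilon+\tfrac{\varepsilon}{2}I)^2=S+\tfrac{\varepsilon^2}{4}I$, that is,
\[
\Lambda_\varepsilon^2+\varepsilon\Lambda_\varepsilon=S.
\]
Hence $S-\Lambda_\varepsilon^2=\varepsilon\Lambda_\varepsilon$. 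This is positive definite because $\Lambda_\varepsilon\succ0$, which holds since $(S+\tfrac{\varepsilon^2}{4}I)^{1/2}\succ\tfrac{\varepsilon}{2}I$.

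\textbf{Density identity (main step).} Compute $\log(d\pi/dR)$ as the difference of two Gaussian log-densities and match it with $\frac1\varepsilon\big(f(u)+\langle Gu,x\rangle+h(x,y)-c(u,y)\big)$. We would first decompose under $\pi$:
\[
\tilde Y=m_Y+\Lambda_\varepsilon U-G^\top\tilde X+Z,\qquad Z\sim\mathcal{N}(0,\varepsilon\Lambda_\varepsilon)\ \text{independent of }(U,\tilde X).
\]
The conditional covariance is $S-\Lambda_\varepsilon^2=\varepsilon\Lambda_\varepsilon$, and $\Sigma_{YX}\Sigma_{XX}^{-1}=-G^\top$. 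So $\pi$ has density $\varphi_{\mu}(u)\,\varphi_{X}(x)\,\varphi_{\varepsilon\Lambda_\varepsilon}(y-m_Y+G^\top x-\Lambda_\varepsilon u)$. Similarly, $\nu$ factors as $\varphi_X(x)\,\varphi_{S}(y-m_Y+G^\top x)$. The ratio is therefore
\[
\frac{\varphi_{\varepsilon\Lambda_\varepsilon}(w-\Lambda_\varepsilon u)}{\varphi_S(w)},\qquad w:=G^\top x+y-m_Y .
\]
Expanding the quadratic exponent gives three groups of terms. The cross term is $\frac1\varepsilon u^\top w=\frac1\varepsilon\big(u^\top y+\langle Gu,x\rangle-m_Y^\top u\big)$; the $u^\top y$ piece combines with $-c(u,y)=-\tfrac12\|u\|^2+u^\top y-\tfrac12\|y\|^2$. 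The $u$-quadratic term is $-\frac1{2\varepsilon}u^\top\Lambda_\varepsilon u$. The $w$-quadratic term is $-\tfrac12 w^\top\big((\varepsilon\Lambda_\varepsilon)^{-1}-S^{-1}\big)w$. Using $\Lambda_\varepsilon^2+\varepsilon\Lambda_\varepsilon=S$, which commutes with $\Lambda_\varepsilon$, we get
\[
(\varepsilon\Lambda_\varepsilon)^{-1}-S^{-1}=\tfrac1\varepsilon\Lambda_\varepsilon S^{-1}=\tfrac1\varepsilon\Psi_\varepsilon ,
\]
matching $h$. The normalizing constants give $-\tfrac12\log\det(\varepsilon\Lambda_\varepsilon S^{-1})$, which yields the constant in $f$. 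This bookkeeping, especially the $\pm\tfrac12\|u\|^2,\ \tfrac12\|y\|^2$ terms and the matrix identity above, is the step most prone to error, and it is where we would be most careful.

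\textbf{Integrability.} The potentials $f,g,h$ are polynomials of degree at most two, so they are integrable under Gaussian $\mu$ and $\nu$. Proposition \ref{prop: converse} then concludes.
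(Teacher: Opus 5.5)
Your proof is correct, and it takes a genuinely different route from the paper's.

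The paper derives the answer rather than checking it:
\begin{itemize}
\item It first shows (Lemma \ref{lem: Gaussian optimality}) that $\pi^\varepsilon$ must be Gaussian, by comparing entropies with the Gaussian coupling that has the same covariance.
\item It then invokes Theorem \ref{thm: duality}, and hence Assumption \ref{asmp: dual}, which it checks via Lemma \ref{lem: coercive X}. From the $u^\top y/\varepsilon$ cross term it reads off that the $(U,Y)$ block of the precision matrix is $-\varepsilon^{-1}I_{d_y}$.
\item Block inversion turns this into the Riccati-type equation (\ref{eq: Riccati}). The paper proves that $\Lambda_\varepsilon$ is symmetric and commutes with $\Omega_{YY}$, and then solves the equation.
\item Part (ii) requires computing the full precision matrix via Sherman--Morrison--Woodbury and comparing quadratic forms.
\end{itemize}

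You instead verify the stated candidate directly:
\begin{itemize}
\item Feasibility follows from the zero $(U,X)$ block.
\item Nondegeneracy follows from the Schur complement identity $S-\Lambda_\varepsilon^2=\varepsilon\Lambda_\varepsilon$, where $S=\Omega_{YY}^{-1}$.
\item The exponential form follows from the conditional representation $\tilde Y=m_Y+\Lambda_\varepsilon U-G^\top\tilde X+Z$. This is the same representation the paper only uses later, in the proof of Proposition \ref{prop: gaussian approx}.
\item Proposition \ref{prop: converse}, which rests only on weak duality and strict convexity of the KL divergence, then gives optimality and uniqueness.
\item Weak duality also makes $(f^\varepsilon,g^\varepsilon,h^\varepsilon)$ dual optimal immediately.
\end{itemize}
Your bookkeeping checks out, including $(\varepsilon\Lambda_\varepsilon)^{-1}-S^{-1}=\varepsilon^{-1}\Lambda_\varepsilon S^{-1}=\varepsilon^{-1}\Psi_\varepsilon$, the $\pm\tfrac12\|u\|^2$ and $\tfrac12\|y\|^2$ terms, and the log-determinant constant.

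What your route buys is economy and weaker dependencies. You need no Gaussian-optimality lemma, no strong duality or dual attainment, no coercivity assumption, and no Riccati or commutation argument. Part (ii) comes for free, so strong duality in the Gaussian case becomes a byproduct rather than an input.

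What the paper's route buys is that it does not presuppose the answer. It explains where $\Lambda_\varepsilon$ comes from and shows that it is forced by the structure of the optimal density. That is the kind of argument you would need to discover closed forms in other settings, and it showcases the duality theory as the key tool.
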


In the absence of covariates $X$ (in which case entropic VQR reduces to standard entropic OT with quadratic cost), our results are consistent with those of \cite{janati2020entropic}. The proof uses a key observation from our duality theory (note that Gaussian distributions satisfy Assumption \ref{asmp: dual}~(ii) by Lemma \ref{lem: coercive X}) that, the cross term between $u$ and $y$ in the log (Lebesgue) density of $\pi^\varepsilon$ is $u^\top y/\varepsilon$, which implies that the corresponding block in the precision matrix $\Gamma_\varepsilon^{-1}$ is proportional to $I_{d_y}$. Using this, we derive a matrix Riccati-type equation for $\Lambda_\varepsilon$ (see (\ref{eq: Riccati}) below), solving which yields (i). 
Part (ii) follows by comparing two density expressions.

\subsection{Convergence of entropic VQR}

In this section, using the closed-form expression of the optimal coupling  $\pi^{\varepsilon}$, we study convergence of $\pi^\varepsilon$ when $\varepsilon \to 0+$. The next result is straightforward. 

\begin{proposition}
\label{prop: gaussian limit}
Consider the setting of Theorem \ref{thm: gaussian solution}. 
As $\varepsilon \to 0+$, we have $\pi^\varepsilon \to \pi^o$ weakly, where $\pi^o = \mathcal{N}(m,\Gamma_o)$ with
\[
\Gamma_o :=
\begin{pmatrix} 
I_{d_y} & O & \Lambda_{o} \\ O & \Sigma_{XX} & \Sigma_{XY} \\ \Lambda_{o} & \Sigma_{YX} & \Sigma_{YY} \end{pmatrix} 
\quad \text{and} \quad \Lambda_o := (\Sigma_{YY}-\Sigma_{YX}\Sigma_{XX}^{-1}\Sigma_{XY})^{1/2}.
\]
The limiting law $\pi^o$ is optimal for the unregularized VQR problem (\ref{eq: vqr indep}) with the independence constraint.
For $(U,\tilde{X},\tilde{Y}) \sim \pi^o$, one has $\tilde{Y} = m_Y + \Lambda_o U + \Sigma_{YX}\Sigma_{XX}^{-1}\tilde{X}$ a.s.
\end{proposition}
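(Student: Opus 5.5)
Weak convergence will follow from the closed form in Theorem \ref{thm: gaussian solution}. Optimality of $\pi^o$ for (\ref{eq: vqr indep}) will follow from an explicit lower bound on the cost together with the structure of $\Gamma_o$.

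\emph{Step 1 (convergence).} Since $\Sigma_{YY}-\Sigma_{YX}\Sigma_{XX}^{-1}\Sigma_{XY}$ is positive definite (it is the Schur complement of the nonsingular $\Sigma$), the matrix square root is continuous on positive semidefinite matrices. Hence $\Lambda_\varepsilon\to\Lambda_o$ as $\varepsilon\to0+$, and so $\Gamma_\varepsilon\to\Gamma_o$. Gaussian laws with converging means and covariances converge weakly, for example by characteristic functions, even if the limit covariance is degenerate. Therefore $\pi^\varepsilon\to\pi^o=\mathcal N(m,\Gamma_o)$ weakly.

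\emph{Step 2 (structure of $\pi^o$).} Let $(U,\tilde X,\tilde Y)\sim\pi^o$ and set $Z:=\tilde Y-m_Y-\Lambda_oU-\Sigma_{YX}\Sigma_{XX}^{-1}\tilde X$. Using $\mathrm{Cov}(U,\tilde X)=O$, $\mathrm{Cov}(U,\tilde Y)=\Lambda_o$ and $\Lambda_o$ symmetric, I will compute $\mathrm{Cov}(Z)$:
\[
\Sigma_{YY}-\Lambda_o^2-\Sigma_{YX}\Sigma_{XX}^{-1}\Sigma_{XY}-2\Lambda_o^2+2\Lambda_o^2-2\Sigma_{YX}\Sigma_{XX}^{-1}\Sigma_{XY}+2\Sigma_{YX}\Sigma_{XX}^{-1}\Sigma_{XY}+\cdots.
\]
This collapses to $\Sigma_{YY}-\Sigma_{YX}\Sigma_{XX}^{-1}\Sigma_{XY}-\Lambda_o^2=0$. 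Since $\E Z=0$, we get $Z=0$ a.s., which is the stated representation of $\tilde Y$. The marginals of $\Gamma_o$ match $\mu$ and $\nu$, so $\pi^o\in\Pi(\mu,\nu)$. Because $(U,\tilde X)$ is jointly Gaussian with zero cross-covariance, $U\independent\tilde X$. Thus $\pi^o$ is feasible for (\ref{eq: vqr indep}).

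\emph{Step 3 (optimality).} This is the main step. For any feasible $\pi$, the identity $\E\|U-\tilde Y\|^2=\E\|U\|^2+\E\|\tilde Y\|^2-2\E\langle U,\tilde Y\rangle$ reduces the problem to bounding $\E\langle U,\tilde Y\rangle$ from above. Write $\tilde Y=m_Y+\Sigma_{YX}\Sigma_{XX}^{-1}\tilde X+W$, where $W$ has covariance equal to the Schur complement $S$ (this holds because $(\tilde X,\tilde Y)\sim\nu$). Independence of $U$ and $\tilde X$ with $\E U=0$ gives $\E\langle U,\Sigma_{YX}\Sigma_{XX}^{-1}\tilde X\rangle=0$, so $\E\langle U,\tilde Y\rangle=\E\langle U,W\rangle$. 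I would bound this by $\tr(S^{1/2})$ in one of two ways. One option is the Gaussian OT bound $\sup\E\langle U,W\rangle\le \tr\big((S^{1/2}I S^{1/2})^{1/2}\big)=\tr(S^{1/2})$ over all couplings of $U$ with covariance $I$ and $W$ with covariance $S$; this holds with general laws via the covariance-matrix argument, i.e.\ Cauchy--Schwarz on the joint covariance PSD constraint. A more elementary option is $\E\langle U,W\rangle\le\tr(\mathrm{Cov}(U,W))$ combined with the PSD constraint $C S^{-1}C^\top\preceq I$ for $C=\mathrm{Cov}(U,W)$, which gives $\tr C\le\tr S^{1/2}$. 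Equality is attained by $\pi^o$, where $W=\Lambda_oU$. Hence $\pi^o$ is optimal. The only delicate point is justifying the trace bound for arbitrary (non-Gaussian) feasible couplings, and the PSD block-covariance argument handles it.
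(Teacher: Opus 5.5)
Your proof is correct. Steps 1--2 follow the paper's argument. The paper reads off the conditional law of $\tilde Y$ given $(U,\tilde X)$ and finds zero conditional covariance, which is the same computation as your $\mathrm{Cov}(Z)=O$. Your displayed expansion there is garbled; the clean version is $\Sigma_{YY}+\Lambda_o^2+\Sigma_{YX}\Sigma_{XX}^{-1}\Sigma_{XY}-2\Lambda_o^2-2\Sigma_{YX}\Sigma_{XX}^{-1}\Sigma_{XY}$, though your conclusion is right.

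The genuine difference is Step 3. The paper does not prove optimality directly. It invokes Example 2.1 of \cite{carlier2016vector}, i.e., it identifies $\pi^o$ with the coupling generated by the conditional Brenier map $u\mapsto m_Y+\Lambda_o u+\Sigma_{YX}\Sigma_{XX}^{-1}x$. That map is the gradient in $u$ of a convex function and pushes $\mathcal{N}(0,I_{d_y})$ to the Gaussian conditional law of $Y$ given $X=x$; their theory says such a coupling solves the independence-constrained problem. Your second-moment argument is self-contained and in fact gives more. It only uses $\E[\tilde X U^\top]=O$, so the bound $\E\langle U,\tilde Y\rangle\le\tr(S^{1/2})$ holds over every coupling in $\Pi(\mu,\nu)$ in which $U$ is merely uncorrelated with $\tilde X$. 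Since $\pi^o$ attains the bound and satisfies full independence, it is optimal simultaneously for (\ref{eq: vqr indep}), for the mean-independence problem (\ref{eq: vqr init 0}), and for the uncorrelatedness relaxation. This directly justifies the introduction's claim that $\pi^o$ solves (\ref{eq: vqr init 0}), which the paper's citation does not explicitly cover. The price is that attainment of the covariance bound relies on the quadratic cost and on the Gaussian/affine structure. The paper's route, by contrast, applies whenever the conditional vector quantile map is available.

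The one step you should write out is the trace bound. Set $K:=CS^{-1/2}$. The Schur-complement condition gives $KK^\top\preceq I_{d_y}$, so $\|K\|_{\mathrm{op}}\le 1$. By cyclicity, $\tr C=\tr(KS^{1/2})=\tr(S^{1/4}KS^{1/4})\le\|K\|_{\mathrm{op}}\tr(S^{1/2})\le\tr(S^{1/2})$.
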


Furthermore, we obtain the asymptotic expansion of the $2$-Wasserstein distance between $\pi^\varepsilon$ and $\pi^o$. 
\begin{proposition}
\label{prop: gaussian approx}
Under the setting of Theorem \ref{thm: gaussian solution}, 
we have as $\varepsilon \to 0+$,
\[
\mathsf{W}_2^2(\pi^\varepsilon,\pi^o) = \varepsilon \tr (L^{-1}\Lambda_o)+ O(\varepsilon^2) \quad \text{with} \quad L:= I_{d_y} + \Lambda_o^2 + \Sigma_{YX}\Sigma_{XX}^{-2}\Sigma_{XY}.
\]
\end{proposition}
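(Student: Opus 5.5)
The plan is to split $\R^{d_y}\times\R^{d_x+d_y}$ orthogonally into the kernel and the range of the singular covariance $\Gamma_o$. I will show that, up to $O(\varepsilon^2)$, $\mathsf{W}_2^2(\pi^\varepsilon,\pi^o)$ equals the variance that $\pi^\varepsilon$ puts on $\ker\Gamma_o$.

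\emph{The kernel of $\Gamma_o$ and the Riccati identity.} Write $B:=\Sigma_{YX}\Sigma_{XX}^{-1}$. By Proposition \ref{prop: gaussian limit}, $\pi^o$-a.s.\ we have $\Lambda_o U + B\tilde X - (\tilde Y - m_Y)=0$. Set
\[
V:=\begin{pmatrix}\Lambda_o\\ B^\top\\ -I_{d_y}\end{pmatrix}.
\]
Then $V^\top(Z-m)=0$ a.s.\ for $Z\sim\pi^o$, so $\operatorname{ran}V\subset\ker\Gamma_o$. Since $\Sigma$ is nonsingular, $\Gamma_o$ has rank at least $d_x+d_y$, and hence $\ker\Gamma_o=\operatorname{ran}V$. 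Moreover $V^\top V=\Lambda_o^2+BB^\top+I_{d_y}=L$, because $BB^\top=\Sigma_{YX}\Sigma_{XX}^{-2}\Sigma_{XY}$. The orthogonal projection onto $\ker\Gamma_o$ is therefore $P_k=VL^{-1}V^\top$; let $P_r:=I-P_k$.

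Next I compute $V^\top\Gamma_\varepsilon V$, the covariance under $\pi^\varepsilon$ of $\Lambda_oU+B\tilde X-\tilde Y$. From the definition of $\Lambda_\varepsilon$ we get the Riccati identity $\Lambda_\varepsilon^2+\varepsilon\Lambda_\varepsilon=\Lambda_o^2$. Under $\pi^\varepsilon$ we can write $\tilde Y-m_Y=\Lambda_\varepsilon U+B\tilde X+\eta$, where $\eta$ is independent of $(U,\tilde X)$ and has covariance $\Sigma_{YY}-B\Sigma_{XY}-\Lambda_\varepsilon^2=\varepsilon\Lambda_\varepsilon$. This gives
\[
V^\top\Gamma_\varepsilon V=(\Lambda_o-\Lambda_\varepsilon)^2+\varepsilon\Lambda_\varepsilon .
\]
Since $\Lambda_\varepsilon=\Lambda_o-\tfrac{\varepsilon}{2}I+O(\varepsilon^2)$, this equals $\varepsilon\Lambda_o+O(\varepsilon^2)$. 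Consequently
\[
\tr(P_k\Gamma_\varepsilon)=\tr(L^{-1}V^\top\Gamma_\varepsilon V)=\varepsilon\tr(L^{-1}\Lambda_o)+O(\varepsilon^2).
\]

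\emph{Lower bound.} Take any coupling of $Z\sim\pi^\varepsilon$ and $Z'\sim\pi^o$. Both laws have mean $m$ and $P_k(Z'-m)=0$ a.s., so
\[
\E\|Z-Z'\|^2\ge \E\|P_k(Z-Z')\|^2=\E\|P_k(Z-m)\|^2=\tr(P_k\Gamma_\varepsilon).
\]
Hence $\mathsf{W}_2^2(\pi^\varepsilon,\pi^o)\ge \tr(P_k\Gamma_\varepsilon)$.

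\emph{Upper bound.} Given $Z\sim\pi^\varepsilon$, take $Z'$ to be an optimal $\mathsf{W}_2$-coupling partner of $P_r(Z-m)+m$ for $\pi^o$. Because $Z'-m\in\operatorname{ran}P_r$, Pythagoras gives
\[
\mathsf{W}_2^2(\pi^\varepsilon,\pi^o)\le \tr(P_k\Gamma_\varepsilon)+\mathsf{W}_2^2\big(\calN(0,P_r\Gamma_\varepsilon P_r),\calN(0,\Gamma_o)\big).
\]
Both covariances in the last term live on the same $(d_x+d_y)$-dimensional subspace $\operatorname{ran}P_r$. There, $\Gamma_o$ is positive definite, and $P_r\Gamma_\varepsilon P_r=\Gamma_o+O(\varepsilon)$ because $\Gamma_\varepsilon-\Gamma_o$ only involves $\Lambda_\varepsilon-\Lambda_o=O(\varepsilon)$. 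Near a nondegenerate covariance, the Bures–Wasserstein distance is locally Lipschitz in the covariance, since $A\mapsto A^{1/2}$ is smooth on positive definite matrices. For instance, the linear map $T=\Gamma_o^{1/2}C^{-1/2}$ restricted to the range, with $C=P_r\Gamma_\varepsilon P_r$, gives a coupling with cost $\tr\big((T-I)C(T-I)^\top\big)=O(\|C-\Gamma_o\|^2)$. So the second term is $O(\varepsilon^2)$, and combining with the lower bound proves the claim.

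The step that needs the most care is this last one: checking that the range-restricted covariance remains uniformly nondegenerate as $\varepsilon\to0+$, so that the square-root perturbation is genuinely first order and the range contribution is $O(\varepsilon^2)$. The remaining steps are short explicit computations.
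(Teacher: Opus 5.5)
Your proof is correct and follows essentially the paper's route: the same orthogonal split into $\calS=\operatorname{ran}\Gamma_o$ and $\calS^\perp=\ker\Gamma_o$ with projection $VL^{-1}V^\top$ (your $V$ is the paper's $H^\top$), the same regression representation $\tilde Y-m_Y=\Lambda_\varepsilon U+B\tilde X+\eta$ with $\eta\sim\calN(0,\varepsilon\Lambda_\varepsilon)$ for the kernel part, and an $O(\varepsilon^2)$ bound on the range part via $\|\Gamma_o^{1/2}-C^{1/2}\|_{\mathrm F}^2$, which is exactly the cost of your map $T$, combined with perturbation of the matrix square root. The differences are cosmetic. You state separate lower and upper bounds where the paper writes the exact Pythagorean identity, and you invoke smoothness of $A\mapsto A^{1/2}$ where the paper uses Bhatia's explicit Lipschitz bound.
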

The proposition implies that the precise order of $\mathsf{W}_2^2(\pi^\varepsilon,\pi^o)$ is $\varepsilon$ under Gaussian marginals. It is worth pointing out that, for standard entropic OT, the recent paper by \cite{malamut2025convergence} establishes that the approximation rate in $\mathsf{W}_2^2$ of the optimal entropic coupling under quadratic cost is precisely $\varepsilon$, for absolutely continuous marginals that admit a Lipschitz continuous Brenier map; see Theorem 3.9 in \cite{malamut2025convergence}. The above result is consistent with theirs.

\section{Proofs for Section  \ref{sec: duality}}
\label{sec: proof duality}

We recall some properties of the KL divergence. Let $M$ be a Polish metric space. For $P,Q \in \calP(M)$, $\KL{P}{Q} \in [0,\infty]$ and $\KL{P}{Q}=0$ if and only if $P=Q$. In addition, the mapping $P \mapsto \KL{P}{Q}$ is convex, lower semicontinuous for the weak topology (and any stronger topology), and strictly convex on its domain. The lower semicontinuity follows by the Donsker-Varadhan variational representation (cf. Theorem 4.6 in \cite{polyanskiy2025information}).

\subsection{Proof of Proposition \ref{prop: primal attainment}}
Uniqueness follows from strict convexity of $\KL{\cdot}{\mu\otimes \nu}$ on its domain, so we prove the existence. In fact, it is not very hard to verify that $\calQ$ is weakly compact (as each $\pi \in \calQ$ has fixed marginals and $X$ has finite expectation), which yields the existence.  

We provide an alternative argument by rewriting
 the primal problem (\ref{eq: entropicPrimal}) as a weak OT problem \cite{beiglbock2025fundamental}. 
Since
$
\KL{\pi}{\mu \otimes \nu} = \int_{\R^{d_y}} \KL{\pi_u}{\nu}\, d\mu(u)
$ \cite[Theorem 2.15]{polyanskiy2025information},
we have
\[
\int c \, d\pi + \varepsilon \KL{\pi}{\mu\otimes\nu} = \int_{\R^{d_y}} \left(\int_{\R^{d_x+d_y}} c(u,y) \, d\pi_u(x,y) + \varepsilon \KL{\pi_u}{\nu} \right)\, d\mu(u).
\]
Let
\[
\calC:= \left \{\rho \in \mathcal{P}_2(\R^{d_x+d_y}): \int_{\R^{d_x+d_y}} x\, d\rho(x,y) = 0 \right\},
\]
and define the convex indicator
\[
\chi_{\calC}(\rho) := \begin{cases}0, & \text{if $\rho\in \mathcal{C}$,} \\
    \infty, & \text{otherwise.}
    \end{cases}
\]
The mean-independence constraint in (\ref{eq: entropicPrimal}) is equivalent to
$
\int \chi_{\calC}(\pi_u) \, d\mu(u) = 0. 
$
Hence, the primal problem (\ref{eq: entropicPrimal}) is equivalent to the following weak OT problem:
\begin{equation}
\label{weakOTprimal}
\inf_{\pi \in \Pi(\mu, \nu)} \int_{\R^{d_y}} C(u, \pi_u) \, d\mu(u), 
\end{equation}
where the cost $C: \R^{d_y} \times \calP_2(\R^{d_x+d_y}) \to [0,\infty]$ is given by
\begin{equation}
C(u, \rho) := \int_{\R^{d_x+d_y}} c(u,y) \, d\rho(x,y) + \varepsilon \KL{\rho}{\nu} + \chi_\calC(\rho).
\label{eq: weak cost}
\end{equation}
We will apply Theorem 2.2~(i) in \cite{beiglbock2025fundamental} to establish the desired claim. Endow $\calP_2(\R^{d_x+d_y})$ with the $2$-Wasserstein distance. We need to verify that the cost $C$ is Borel, and that $\rho \mapsto C(u,\rho)$ is convex and lower semicontinuous. The mapping $(u,\rho) \mapsto \int_{\R^{d_x+d_y}} c(u,y) \, d\rho(x,y)$ is jointly continuous and linear in $\rho$, and $\rho \mapsto \KL{\rho}{\nu}$ is convex and lower semicontinuous. The set $\calC$ is convex and closed in $\calP_2(\R^{d_x+d_y})$, so $\chi_\calC$ is convex and lower semicontinuous. As such, applying  Theorem 2.2~(i) in \cite{beiglbock2025fundamental}, we obtain the desired claim. \qed

\begin{remark}
As discussed in the introduction, it seems highly nontrivial to verify Condition (B) in \cite{beiglbock2025fundamental} for the cost (\ref{eq: weak cost}), and as such, it is unclear whether their Theorem 1.2 leads to our intended duality results.
\end{remark}
\subsection{Proof of Proposition \ref{prop: weak duality}}

It suffices to show that $\mathsf{T}^\varepsilon(\mu,\nu) \ge D^\varepsilon(f,g,h)$ for all $(f,g,h) \in L^1(\mu) \times L^1(\mu;\R^{d_x}) \times L^1(\nu)$. We may assume that $\iota^\varepsilon(f, g, h) < \infty$; otherwise, $D(f, g, h) = -\infty$ and the conclusion trivially holds.

Pick any feasible coupling $\pi \in \calQ$ with  $\KL{\pi}{R} < \infty$, and let $p_\pi:= \frac{d\pi}{dR}$. Then $\int p_\pi \, dR = \int d\pi = 1$. Let $F(u,x,y):= f(u) + \langle g(u), x\rangle + h(x,y)$. Observe that 
\[
\int e^{\frac{1}{\varepsilon}(F - c)}\,dR = \frac{1}{\varepsilon} \iota^\varepsilon(f, g, h) + 1 < \infty.
\] 
Apply the inequality
\[
\alpha\log\alpha - \alpha \ge \beta \alpha -e^{\beta}, \ \alpha \ge 0, \beta \in \R, 
\]
which follows from Fenchel's inequality,
with $\alpha = p_\pi$ and $\beta = (F-c)/\varepsilon$, to obtain
\begin{equation}
\varepsilon (p_\pi \log p_\pi - p_\pi) \ge (F-c)p_\pi - \varepsilon e^{\frac{1}{\varepsilon}(F-c)}.
\label{eq: fenchel}
\end{equation}

We shall verify that $\langle g(u),x \rangle \in L^1(\pi)$. Since the left-hand side and the second term on the right-hand side of (\ref{eq: fenchel}) are integrable under $R$, we have $(F-c)^+p_\pi \in L^1(R)$, i.e., $(F-c)^+ \in L^1(\pi)$. Observe that
\[
(\langle g(u),x \rangle)^+ \le (F(u,x,y)-c(u,y))^{+} + (f(u)+h(x,y)-c(u,y))^-.
\]
The right-hand side is integrable under $\pi$, so that $(\langle g(u),x \rangle)^+ \in L^1(\pi)$. Now, since $\pi$ is a coupling for $(\mu,\nu)$, $\| x \|$ is integrable under $\pi$, so that (cf. (\ref{eq: rcd}))
    \[
    \int_{\R^{d_x+d_y}} \|x\| \, d\pi_u(x,y) < \infty \ \text{$\mu$-a.e. $u$.}
    \]
 Pick any $u$ for which the integral on the left-hand side is finite. 
   Then
    \[
    \int_{\R^{d_x+d_y}} \langle g(u),x\rangle \, d\pi_u(x,y) = g(u)^\top \int_{\R^{d_x+d_y}} x \, d\pi_u(x,y) = 0.
    \]
    This implies that 
    \[
    \int_{\R^{d_x+d_y}} (\langle g(u),x\rangle)^- \, d\pi_u(x,y) = \int_{\R^{d_x+d_y}} (\langle g(u),x\rangle)^+ \, d\pi_u(x,y).
    \]
    Hence,  
    \[
    \begin{split}
    \int_{\R^{d_y} \times \R^{d_x+d_y}}  (\langle g(u),x\rangle)^{-} \, d\pi(u,x,y) &= \int_{\R^{d_y}} \int_{\R^{d_x+d_y}} (\langle g(u),x\rangle)^- \, d\pi_u(x,y) \, d\mu(u) \\
    &= \int_{\R^{d_y}} \int_{\R^{d_x+d_y}} (\langle g(u),x\rangle)^+ \, d\pi_u(x,y) \, d\mu(u) \\
    &< \infty.
    \end{split}
    \]
    Conclude that $\langle g(u),x\rangle \in L^1(\pi)$, which ensures that, for $(U,\tilde{X},\tilde{Y}) \sim \pi$, 
    \[
   \int_{\R^{d_y} \times \R^{d_x+d_y}} \langle g(u),x \rangle \, d\pi(u,x,y) = \E\big[ \langle g(U),\tilde{X} \rangle \big] = \E\big[\langle g(U),\E[\tilde{X} \mid U] \rangle \big] = 0.
   \]

Finally, using inequality (\ref{eq: fenchel}) again, we obtain
\[
\varepsilon\int (p_\pi\log p_\pi - p_\pi)\,dR \ge \int (F - c)p_\pi\, dR - \varepsilon \int e^{\frac{1}{\varepsilon}(F - c)}\,dR,
\]
that is, 
\[
\varepsilon \KL{\pi}{R} - \varepsilon \ge  \left( \int f \,d\mu + \int h \,d\nu - \int c \,d\pi \right) - \iota^\varepsilon(f, g, h) - \varepsilon.
\]
Rearranging terms, we obtain the desired claim. \qed

\subsection{Proof of Lemma \ref{lem: coercive X}}

Lemma \ref{lem: coercive X} directly follows from the next lemma and the fact that $e^{-c/\varepsilon}$ is bounded.

\begin{lemma}
\label{lem: coercive}
Let $(M,d)$ be a Polish metric space, and let $Q \in \calP(M)$ be satisfy 
\begin{equation}
\int e^{\alpha d(x,x_0)} \, dQ(x) < \infty, \ \forall \alpha > 0,
\label{eq: subexponential}
\end{equation}
for some $x_0 \in M$. 
Then the mapping $P \mapsto \KL{P}{Q}$ is coercive in $\mathsf{W}_1$.
\end{lemma}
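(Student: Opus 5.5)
Fix $a\in(0,\infty)$ and write $S_a := \{P \in \calP_1(M) : \KL{P}{Q} \le a\}$. The goal is to show that $S_a$ is compact in $\mathsf{W}_1$. Recall that a subset of $\calP_1(M)$ is relatively compact in $\mathsf{W}_1$ if and only if it is tight and has uniformly integrable first moments, i.e.\ $\lim_{R\to\infty}\sup_{P}\int_{\{d(x,x_0)>R\}} d(x,x_0)\,dP(x)=0$ (cf.\ Theorem 6.9 in \cite{villani2009optimal}). Closedness then follows from lower semicontinuity of $\KL{\cdot}{Q}$: the $\mathsf{W}_1$-topology is stronger than the weak topology, so any $\mathsf{W}_1$-limit of elements of $S_a$ stays in $S_a$. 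The proof therefore reduces to tightness plus uniform integrability.

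\textbf{Tightness.} The main tool is the Donsker--Varadhan (Gibbs) variational inequality: for every measurable $\varphi\ge 0$,
\[
\int \varphi\,dP \le \KL{P}{Q} + \log\int e^{\varphi}\,dQ .
\]
Let $K$ be a compact set with $Q(K^c)\le\delta$, which exists since $Q$ is tight on the Polish space $M$. Apply the inequality with $\varphi = t\,\mathbbm{1}_{K^c}$. This gives
\[
t\,P(K^c) \le a + \log\bigl(1+\delta(e^t-1)\bigr) \le a + \log(1+\delta e^t).
\]
Choose $t$ large and then $\delta = e^{-t}$. Then $P(K^c)\le (a+\log 2)/t$ uniformly over $P\in S_a$, which proves tightness.

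\textbf{Uniform integrability.} This is the step where the hypothesis \eqref{eq: subexponential} is used, and I expect it to be the main technical point. For $\alpha>0$ and $R>0$, take $\varphi = \alpha\, d(x,x_0)\mathbbm{1}\{d(x,x_0)>R\}$. The variational inequality yields
\[
\alpha\int_{\{d>R\}} d(x,x_0)\,dP \le a + \log\Bigl(1 + \int_{\{d>R\}} e^{\alpha d(x,x_0)}\,dQ\Bigr).
\]
By \eqref{eq: subexponential} and dominated convergence, the last integral tends to $0$ as $R\to\infty$ for each fixed $\alpha$. Hence
\[
\limsup_{R\to\infty}\sup_{P\in S_a}\int_{\{d>R\}} d\,dP \le a/\alpha .
\]
Since $\alpha$ is arbitrary, letting $\alpha\to\infty$ gives uniform integrability. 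In particular every $P\in S_a$ has a finite first moment, so $S_a\subset \calP_1(M)$ automatically.

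\textbf{Conclusion.} Combining the three properties, every sequence in $S_a$ has a subsequence converging weakly, hence in $\mathsf{W}_1$, by uniform integrability, and the limit lies in $S_a$ by lower semicontinuity. So $S_a$ is $\mathsf{W}_1$-compact.

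For Lemma \ref{lem: coercive X}, I would apply Lemma \ref{lem: coercive} with $M=\R^{d_y}\times\R^{d_x+d_y}$, the metric $\tilde{\mathsf d}$, and $Q=\tilde R$. Since $\tilde{\mathsf d}(\cdot,0)\le 2+\|x\|$ and $d\tilde R\le \alpha^{-1}dR$ because $e^{-c/\varepsilon}\le 1$, condition \eqref{eq: subexponential} for $\tilde R$ follows from \eqref{eq: superexponential}.
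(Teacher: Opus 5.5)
Your proof is correct. Its skeleton matches the paper's: tightness plus uniform integrability of $d(\cdot,x_0)$ via Theorem 6.9 in \cite{villani2009optimal}, with closedness from lower semicontinuity of $\KL{\cdot}{Q}$. The difference is that you get both estimates from one tool, the Gibbs/Donsker--Varadhan inequality $\int\varphi\,dP\le\KL{P}{Q}+\log\int e^{\varphi}\,dQ$.

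\textbf{Tightness.} The paper uses data processing together with the bound $\mathsf{kl}(p\,\|\,q)\ge p\log(1/q)-\log 2$. Your choice $\varphi=t\mathbbm{1}_{K^c}$ with $Q(K^c)\le e^{-t}$ reproduces exactly its estimate $P(A)\le(a+\log 2)/\log(1/Q(A))$. So this part is the same bound, derived differently.

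\textbf{Uniform integrability.} This is the genuine difference. The paper invokes the weighted Csisz\'ar--Kullback--Pinsker inequality of Bolley and Villani, which controls $\int\varphi_\lambda\,d|P-Q|$. It yields $\limsup_{\lambda\to\infty}\sup_{P}\int\varphi_\lambda\,dP\le\frac{3}{\alpha}(\sqrt a+a/2)$. You instead plug $\varphi=\alpha\,d(\cdot,x_0)\mathbbm{1}_{\{d(\cdot,x_0)>R\}}$ directly into the variational inequality and get the cleaner bound $a/\alpha$.

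\textbf{What each route buys.} Yours is more elementary and self-contained; the paper already relies on the Donsker--Varadhan representation for lower semicontinuity of KL. It also uses only the one-sided control of $\int\varphi\,dP$ that coercivity actually needs. The weighted Pinsker inequality gives a two-sided, total-variation-type bound, which is stronger than necessary here but of independent interest; that is why the paper highlights it.

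As a small bonus, your estimate shows that $\KL{P}{Q}<\infty$ by itself forces $P\in\calP_1(M)$. So the sublevel set could equally be taken in $\calP(M)$.
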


There is a partial converse to Lemma \ref{lem: coercive}; see Appendix \ref{sec: converse}.

\begin{proof}[Proof of Lemma \ref{lem: coercive}]

The lemma (indirectly) follows from Theorem 2.1 in \cite{wang2010sanov} that establishes the $\mathsf{W}_p$-version of Sanov's theorem on large deviations for any $p \in [1,\infty)$. We provide a more direct proof using the weighted Pinsker inequality of Bolley and Villani \cite{bolley2005weighted}, which may be of independent interest.

Fix any $0<a < \infty$ and consider the sublevel set
\[
\calL_a = \{ P \in \calP_1(M): \KL{P}{Q} \le a \}.
\]
We first show that $\calL_a$ is weakly precompact.\footnote{Weak precompactness of $\calL_a$ also follows from the (classical) Sanov theorem (cf. Theorem 3.2.17  in \cite{deuschel2001large}).} 
Pick any $P \in \calL_a$. By the data processing inequality (cf. Theorem 7.4 in   \cite{polyanskiy2025information}), one has 
\[
\KL{P}{Q} \ge \mathsf{kl}(P(A) \, \| \, Q(A)), \ A \subset M,
\]
where 
\[
\mathsf{kl}(p  \, \| \, q) = p\log \frac{p}{q} + (1-p)\log \frac{1-p}{1-q}
\]
is the binary relative entropy.  Observe that for $p \in [0,1]$ and $q \in (0,1)$,
\[
\mathsf{kl}(p  \, \| \, q) = p\log \frac{1}{q} + \underbrace{\big [ p \log p + (1-p)\log (1-p) \big]}_{\ge -\log 2}+ \underbrace{(1-p)\log \frac{1}{1-q}}_{\ge 0} \ge p\log \frac{1}{q} - \log 2.
\]
Hence,
\[
P(A) \le \frac{a + \log 2}{\log (1/Q(A))},
\]
which implies that $\calL_a$ is uniformly tight and hence weakly precompact by Prohorov's theorem.

To show that $\calL_a$ is compact in $\mathsf{W}_1$, it remains to establish that 
\begin{equation}
\lim_{\lambda \to \infty} \sup_{P \in \calL_a} \int d(\cdot,x_0) \mathbbm{1}_{\{ d(\cdot,x_0) > \lambda \}} \, dP  = 0.
\label{eq: UI}
\end{equation}
See Theorem 6.9 in \cite{villani2009optimal}. Set $\varphi_\lambda := d(\cdot,x_0) \mathbbm{1}_{\{ d(\cdot,x_0) > \lambda \}}$. By the weighted Pinsker inequality from Theorem 2.1 in \cite{bolley2005weighted}, we have 
\[
\begin{split}
\int  \varphi_\lambda  \, dP &\le \int \varphi_\lambda \, dQ + \frac{2}{\alpha} \left (\frac{3}{2} + \log \int e^{\alpha \varphi_\lambda} \, dQ \right ) \left ( \sqrt{\KL{P}{Q}} + \frac{1}{2}\KL{P}{Q} \right ) \\
&\le \int \varphi_\lambda \, dQ +\frac{2}{\alpha} \left (\frac{3}{2} + \log \int e^{\alpha \varphi_\lambda} \, dQ \right ) \left ( \sqrt{a} + \frac{a}{2} \right), \quad \forall P \in \calL_a. 
\end{split}
\]
Since $d(\cdot,x_0) \in L^1(Q)$ by Condition (\ref{eq: subexponential}),  the dominated convergence theorem yields
\[
\lim_{\lambda \to \infty} \int \varphi_\lambda \, dQ  = 0.
\]
In addition,
\[
\int e^{\alpha \varphi_\lambda} \, dQ  = Q(\{ d(\cdot,x_0) \le \lambda \}) + \int_{\{ d(\cdot,x_0) > \lambda\}} e^{\alpha d(\cdot,x_0)} \, dQ.
\]
Again, by the dominated convergence theorem, the right-hand side converges to 1 as $\lambda \to \infty$.
By taking the supremum over $P \in \calL_a$ and taking the limits $\lambda \to \infty$ and  $\alpha \to \infty$ in this order, we obtain (\ref{eq: UI}).
\end{proof}

\begin{remark}[Necessity of Condition \ref{eq: superexponential} in Lemma \ref{lem: coercive X}]
\label{rem: necessity}
Assumption \ref{asmp: dual}~(ii) may hold even when Condition \ref{eq: superexponential} is not met. To see this, observe that, for $\alpha' > 0$,
\[
\int e^{\alpha' \tilde{\mathsf{d}}(\cdot,0)} \, d\tilde{R} \le \alpha^{-1} e^{2\alpha'} \int e^{\alpha' \|x\| - c(u,y)/\varepsilon} \, d\mu(u) d\nu(x,y).
\]
Since $c(u,y)\ge -\|u\|^2/2 + \|y\|^2/4$, we have
\[
\int e^{\alpha' \|x\| - c(u,y)/\varepsilon} \, d\mu(u) d\nu(x,y) \le \underbrace{\int e^{\|u\|^2/(2\varepsilon)} \, d\mu(u)}_{=I} \times \underbrace{\int e^{\alpha'\|x\| - \|y\|^2/(2\varepsilon)} \, d\nu(x,y)}_{=II}. 
\]
The first term $I$ is finite if, e.g., $\mu$ is compactly supported. The second term $II$ is finite for any $\alpha'>0$ if, e.g., $Y=X$, even when Condition (\ref{eq: superexponential}) fails to hold. 
\end{remark}

\subsection{Proof of Theorem \ref{thm: duality}}

\subsubsection{Preliminary lemmas}

Before the proof, we shall recall the following (standard) result. We provide its proof for completeness.

\begin{lemma}
\label{lem: minimizer}
Let $M$ be a metric space and $f: M \to (-\infty,\infty]$ be coercive in the sense that the sublevel sets of $f$ are compact. If $\inf f > -\infty$, then there exists at least one $\bar{x} \in M$ such that $f(\bar{x}) = \inf f$. 
\end{lemma}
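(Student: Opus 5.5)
The plan is the standard minimizing-sequence argument, using only the two hypotheses: $\inf f > -\infty$ and compactness of sublevel sets. Set $m := \inf f$. If $m = \infty$, then $f \equiv \infty$. This case is degenerate. Either one reads the statement with $m<\infty$ (the intended use), or one notes that every $\bar x \in M$ attains $m$ trivially, provided $M \neq \emptyset$. So assume $m \in \R$.

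First, choose a minimizing sequence $(x_n) \subset M$ with $f(x_n) \downarrow m$; for instance, require $f(x_n) \le m + 1/n$. Fix any finite level $a > m$, say $a := m+1$. Then every $x_n$ lies in the sublevel set $S_a := \{x \in M : f(x) \le a\}$, which is compact by coercivity. Since $M$ is a metric space, compactness is equivalent to sequential compactness. Hence some subsequence $x_{n_k}$ converges to a point $\bar x \in S_a$.

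The only step needing care is to show $f(\bar x) \le m$, since no lower semicontinuity is assumed explicitly. I would get it from coercivity applied at every level. For each $\delta > 0$, all but finitely many of the $x_{n_k}$ lie in $S_{m+\delta}$. This set is compact, hence closed in the metric space $M$, so the limit $\bar x$ also lies in $S_{m+\delta}$. That is, $f(\bar x) \le m + \delta$ for every $\delta > 0$. Letting $\delta \downarrow 0$ gives $f(\bar x) \le m$. Since $f(\bar x) \ge m$ trivially, $f(\bar x) = m = \inf f$. The remark that carries the argument is that compactness of all sublevel sets implies they are closed, which is exactly lower semicontinuity of $f$.
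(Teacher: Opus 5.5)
Your proposal is correct and follows essentially the same route as the paper: a minimizing sequence trapped in the compact sublevel set $\{f \le \inf f + 1\}$, a convergent subsequence, and lower semicontinuity to conclude. The one difference is that you justify lower semicontinuity explicitly, via closedness of the compact sublevel sets $\{f \le m+\delta\}$, whereas the paper simply asserts that coercivity implies lower semicontinuity.
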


\begin{proof}[Proof of Lemma \ref{lem: minimizer}]
The conclusion is trivial if $f \equiv \infty$, so we assume that $a := \inf f$ is finite. Let $x_n$ be a sequence such that $f(x_n) \to a$. Since $x_n \in \{ f \le a+1 \}$ for sufficiently large $n$, there exists a convergent subsequence $x_{n_k}$, $x_{n_k} \to \bar{x}$. Coercivity (in our definition) implies lower semicontinuity, so
\[
f(\bar{x}) \le \liminf_{k \to \infty} f(x_{n_k}) = a,
\]
completing the proof.
\end{proof}

We will also use the following lemma, which is a small modification to (a special case of) Theorem 3.1 in \cite{csiszar1975divergence}.

\begin{lemma}
\label{lem: csiszar}
Let $\Omega$ be a measurable space and $\calP(\Omega)$ be the set of all probability measures on $\Omega$. For given measurable functions $\phi_i: \Omega \to \R \ (i \in \{1,\dots,n\})$, let $\calQ \subset \calP(\Omega)$ be a nonempty convex set such that $\int \phi_i \, dP = 0$ for all $i \in \{1,\dots,n\}$ and for all $P \in \calQ$. For a given $R \in \calP(\Omega)$, suppose that there exists a (unique) $Q \in \calQ$ such that $\KL{Q}{R} = \inf_{P \in \calQ} \KL{P}{R} < \infty$, and that  
\begin{equation}
\calQ':=\left \{ P \in \calP(\Omega) : P \ll Q, \frac{dP}{dQ} \le 2, \int \phi_i \, dP = 0, \forall i \in \{1,\dots,n\} \right \} \subset \calQ.
\label{eq: csiszar}
\end{equation}
Then $Q$ has a density of the form
\[
\frac{dQ}{dR} = a e^{\sum_{i=1}^nb_i \phi_i}
\]
for some $a > 0$ and $b_i \in \R$.
\end{lemma}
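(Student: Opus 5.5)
We follow Csisz\'ar's perturbation argument (Theorem 3.1 in \cite{csiszar1975divergence}), with the assumption (\ref{eq: csiszar}) standing in for the linear structure of $\calQ$. Set $\calQ_0:=\{P\in\calP(\Omega): \int\phi_i\,dP=0,\ \forall i\}$. Let $p:=dQ/dR$ and, for any bounded measurable $\psi$ with $\int\psi\,dQ=0$ and $\int\psi\phi_i\,dQ=0$ for all $i$, consider the perturbation $dP_t=(1+t\psi)\,dQ$ with $|t|\,\|\psi\|_\infty\le 1$. Then $P_t$ is a probability measure, $dP_t/dQ\le 2$, and $\int\phi_i\,dP_t=0$ (formally; see below for integrability). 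Hence $P_t\in\calQ'\subset\calQ$, so $t\mapsto \KL{P_t}{R}$ is minimized at $t=0$.

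Differentiating at $t=0$ gives the first-order condition
\[
\int \psi\log p\,dQ=0 .
\]
To justify the differentiation, write $\KL{P_t}{R}=\int(1+t\psi)\log\big((1+t\psi)p\big)\,dQ$. Since $(1+t\psi)\log(1+t\psi)$ is bounded and smooth in $t$, and $\int|\psi\log p|\,dQ<\infty$, the derivative is $\int\psi\log p\,dQ$. Here we need $\log p\in L^1(Q)$, which follows from $\KL{Q}{R}<\infty$: the positive part is integrable, and $\int(\log p)^-\,dQ=\int p(\log p)^-\,dR\le e^{-1}$. Because $t$ ranges over both signs, the derivative must vanish.

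We then conclude by linear algebra. The condition says that $\log p$ is orthogonal, in the duality between $L^1(Q)$ and $L^\infty(Q)$, to every bounded $\psi$ that annihilates $1,\phi_1,\dots,\phi_n$ under $Q$. By the finite-codimension annihilator lemma, $\log p$ lies $Q$-a.e. in $\mathrm{span}\{1,\phi_1,\dots,\phi_n\}$. This yields $p=a e^{\sum b_i\phi_i}$ $Q$-a.e.

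\textbf{Main obstacle: integrability of the $\phi_i$.} The constraints $\int\psi\phi_i\,dQ=0$ only make sense if $\phi_i\in L^1(Q)$. This holds because $Q\in\calQ$, and membership in $\calQ$ implies $\int\phi_i\,dQ=0$, so the integrals are finite. Since $\psi$ is bounded, $\psi\phi_i\in L^1(Q)$ as well, and the annihilator argument can be run in $L^1(Q)$. Concretely, if $\log p\notin V:=\mathrm{span}\{1,\phi_i\}$, which is finite-dimensional and hence closed, Hahn--Banach gives some $\psi\in L^\infty(Q)$ that vanishes on $V$ and has $\int\psi\log p\,dQ\neq0$, a contradiction.

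\textbf{Second subtlety: sets where $p=0$.} We must show $p>0$ $R$-a.e., otherwise the representation fails off the support of $Q$. For this we follow Csisz\'ar's argument: mixing $Q$ with a measure charging $\{p=0\}$ would strictly decrease the KL divergence to first order, since the derivative is $-\infty$. However, this mixture need not satisfy $dP/dQ\le 2$, so it lies outside $\calQ'$. We therefore expect the paper to either establish this positivity through an additional argument specific to its setting, or to interpret the density identity only on the support of $Q$ and handle equivalence $Q\sim R$ separately. We will attempt positivity by perturbing toward $\tilde R$-restricted measures within $\calQ$ if the application allows.
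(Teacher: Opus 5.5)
Your core argument is the one the paper intends. Its proof says to run Csisz\'ar's Theorem 3.1 argument with $\calQ'$ in place of the linear family, using that $Q$ is an algebraic inner point of $\calQ'$ and that finite-dimensional subspaces of $L^1(Q)$ are closed. Your perturbations $dP_t=(1+t\psi)\,dQ$, the integrability checks for $\phi_i$ and $\log p$, the first-order condition, and the Hahn--Banach step are all correct. Together they give $p=ae^{\sum_i b_i\phi_i}$ $Q$-a.e., i.e.\ on $\{p>0\}$.

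The positivity issue you flag is a genuine gap, but it is a gap in the statement, not in your method: without an extra hypothesis the lemma is false. Take $\Omega=\{1,2\}$, $R$ uniform, $n=1$, $\phi_1=\mathbbm{1}_{\{2\}}$ and $\calQ=\{\delta_1\}$, which is even the full linear family. Every hypothesis holds, with $Q=\delta_1$, $\KL{\delta_1}{R}=\log 2$ and $\calQ'=\{\delta_1\}$. Yet $dQ/dR=2\cdot\mathbbm{1}_{\{1\}}$ vanishes on an $R$-atom, so it cannot equal $ae^{b_1\phi_1}$. This is why Csisz\'ar's result carries an exceptional set off which the density vanishes; the paper's one-line proof glosses over this.

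What is needed is some $P_0\in\calQ$ with $P_0\sim R$ and $\KL{P_0}{R}<\infty$. Given it, your mixing idea works and does not need $\calQ'$ at all. The mixture $P_t=(1-t)Q+tP_0$ lies in $\calQ$ by convexity. Suppose $R(\{p=0\})>0$, and set $q_0:=dP_0/dR>0$. Then $(\KL{P_t}{R}-\KL{Q}{R})/t\to-\infty$ as $t\downarrow 0$, by monotone convergence. Pointwise, the integrands decrease monotonically as $t\downarrow0$ and are bounded above by their integrable value at $t=1$. On $\{p=0\}$ they equal $q_0\log(tq_0)\to-\infty$. This contradicts minimality of $Q$.

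Equivalently, Csisz\'ar's Pythagorean inequality $\KL{P}{R}\ge\KL{P}{Q}+\KL{Q}{R}$ for $P\in\calQ$ forces $P_0\ll Q$, hence $R\ll Q$. This is the same fact, via Corollary 1.13 of Nutz's notes, that the paper uses to get $F\in L^1(R)$ for $\pi^\varepsilon$.

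In the paper's application (reference $\tilde R$, $\calQ=\calQ_n$, $Q=\pi_n$) one takes $P_0=\mu\otimes\nu$. It lies in $\calQ_n$ and is equivalent to $\tilde R$, with $\KL{\mu\otimes\nu}{\tilde R}=\log\alpha+\varepsilon^{-1}\int c\,d(\mu\otimes\nu)<\infty$. So $\pi_n\sim\tilde R$, and the representation does hold $\tilde R$-a.e. there. Rather than trying to extract positivity from $\calQ'$, add this hypothesis to the lemma, or state the conclusion only on $\{dQ/dR>0\}$.
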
 

\begin{proof}[Proof of Lemma \ref{lem: csiszar}]
Theorem 3.1 in \cite{csiszar1975divergence} assumes that $\calQ$ (in our context) is the set of \textit{all} $P \in \calP(\Omega)$ such that $\int f_i \, dP =0$ for all $i$. In our case, we allow $\calQ$ to be smaller than that, but Condition (\ref{eq: csiszar}) ensures that $Q$ minimizes $\KL{\cdot}{R}$ over $\calQ'$, which contains $Q$ as an algebraic inner point. So, mimicking the argument in the proof of Theorem 3.1 in \cite{csiszar1975divergence} yields the claim of the lemma (recall that any finite-dimensional subspace of $L^1(Q)$ is closed in $L^1(Q)$). 
\end{proof}

The following lemma is inspired by Lemma 2.10 in \cite{nutz2021introduction}. In the proof of Theorem \ref{thm: duality} below, we will construct dual potentials from the $R$-a.e. limit of $f_n(u)+\langle g_n(u),x\rangle + h_n(x,y)$ for suitable measurable functions $(f_n,g_n,h_n)$. Because of the presence of the interaction term $\langle g_n(u),x \rangle$, special care is needed to construct dual potentials in such a way that they are separately measurable, i.e., one has to choose ``canonical'' points more carefully than standard entropic OT.  
\begin{lemma}
\label{lem: measure}
Suppose Assumption \ref{asmp: dual}~(i) holds. If a Borel subset $S \subset \R^{d_y} \times \R^{d_x+d_y}$ has full $R$-measure, then one can choose $u^* \in \R^{d_y},(x_i^*,y_i^*) \in \R^{d_x + d_y} \ (i \in \{0,1,\dots,d_x\})$ with $(u^*,x_i^*,y_i^*) \in S$ for all $i \in \{0,1,\dots,d_x\}$ for which the following hold: 
\begin{enumerate}
    \item[(i)] the $d_x+1$ vectors
    \[
    \binom{1}{x_0^*},\dots,\binom{1}{x_{d_x}^*}
    \]
    form a basis of $\R^{d_x+1}$; and
    \item[(ii)] there exist $\calU_0 \subset \R^{d_y}$ and $\calZ_0 \subset \R^{d_x+d_y}$ with $\mu(\calU_0)=\nu(\calZ_0)=1$ such that, for $S_0:=(\calU_0 \times \calZ_0) \cap S$, it holds that $(u^*,x_i^*,y_i^*) \in S_0$ for all $i \in \{0,1,\dots,d_x\}$ and
\[
(u,x,y) \in S_0 \Longrightarrow (u^*,x,y) \in S_0 \ \text{and} \ (u,x_i^*,y_i^*) \in S_0, \ \forall i \in \{0,1,\dots,d_x\}. 
\]
\end{enumerate}
\end{lemma}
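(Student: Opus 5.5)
We will use Fubini's theorem to pass to sections of $S$, and then pick the canonical points in the right order. Write $\kappa$ for the law of $X$. For $u \in \R^{d_y}$ set $S^u := \{(x,y) : (u,x,y) \in S\}$, and for $z=(x,y)$ set $S_z := \{u : (u,x,y) \in S\}$. Since $R(S^c)=0$, Fubini gives a Borel set $\calU_1$ with $\mu(\calU_1)=1$ such that $\nu(S^u)=1$ for all $u \in \calU_1$. Likewise it gives a Borel set $\calZ_1$ with $\nu(\calZ_1)=1$ such that $\mu(S_z)=1$ for all $z \in \calZ_1$. (Measurability of $u \mapsto \nu(S^u)$ is standard for Borel $S$.)

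\textbf{Choosing the points.} First pick $u^* \in \calU_1$; in particular $\nu(S^{u^*} \cap \calZ_1)=1$. Next choose $(x_i^*,y_i^*) \in S^{u^*}\cap \calZ_1$ for $i=0,\dots,d_x$ so that the vectors $(1,x_i^*)$ are linearly independent. This is possible because, by Assumption \ref{asmp: dual}~(i) and $\E[X]=0$, $\kappa$ is not carried by any affine hyperplane. Indeed, if $\kappa(\{x: a+\langle b,x\rangle=0\})=1$ with $(a,b)\neq 0$, then $\E[(a+\langle b,X\rangle)^2]=a^2+b^\top\E[XX^\top]b=0$, which forces $a=0$ and $b=0$. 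We argue inductively. Suppose $(x_0^*,\dots,x_k^*)$ have been chosen with $k<d_x$, so that the vectors $(1,x_j^*)$ span a proper subspace of $\R^{d_x+1}$. The set $\{x : (1,x)\in\mathrm{span}\}$ lies in an affine hyperplane, hence has $\kappa$-measure strictly less than $1$. Since the full-measure set $S^{u^*}\cap\calZ_1$ meets its complement, we can pick the next point there. The base step $k=0$ is trivial.

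\textbf{Defining $\calU_0,\calZ_0$.} Let $\calZ_0 := \calZ_1 \cap S^{u^*}$ and $\calU_0 := \calU_1 \cap \bigcap_{i=0}^{d_x} S_{(x_i^*,y_i^*)}$. These are Borel sets, $\nu(\calZ_0)=1$, and $\mu(\calU_0)=1$ since each $(x_i^*,y_i^*)\in\calZ_1$. Moreover $u^*\in\calU_0$: we have $u^* \in \calU_1$, and $(u^*,x_i^*,y_i^*)\in S$ by construction. Also $(x_i^*,y_i^*)\in\calZ_0$, so $(u^*,x_i^*,y_i^*)\in S_0$.

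\textbf{Verifying the implication in (ii).} Take $(u,x,y)\in S_0$. Then $(x,y)\in\calZ_0\subset S^{u^*}$, so $(u^*,x,y)\in S$; combined with $u^*\in\calU_0$ and $(x,y)\in\calZ_0$, this gives $(u^*,x,y)\in S_0$. Also $u\in\calU_0\subset S_{(x_i^*,y_i^*)}$, so $(u,x_i^*,y_i^*)\in S$; since $(x_i^*,y_i^*)\in\calZ_0$, we get $(u,x_i^*,y_i^*)\in S_0$.

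\textbf{Main obstacle.} The delicate step is the order of choices. We must fix $u^*$ first, so that $S^{u^*}$ has full measure. The $x_i^*$ must then lie in both $S^{u^*}$ and $\calZ_1$. Only after these choices can $\calU_0$ be shrunk, using the sections at the finitely many chosen points. The affine-independence argument relies on the non-degeneracy implied by Assumption \ref{asmp: dual}~(i).
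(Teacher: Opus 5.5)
Your proof is correct and follows the paper's argument step for step. You fix $u^*$ with a full-measure section, take $\calZ_0$ to be that section intersected with $\{z:\mu(S_z)=1\}$, choose the affinely independent points inside $\calZ_0$, and only then shrink $\calU_0$ by the finitely many sections at those points. The one difference is that you prove the nondegeneracy step inline, by induction using $a^2+b^\top\E[XX^\top]b=0$ and intersecting with the cylinder over the hyperplane; the paper delegates this to Lemma~\ref{lem: nondegeneracy}, which argues via the projection of $\calZ_0$ onto the $x$-coordinates and the completion of the law of $X$, a measurability detail your version avoids.
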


\begin{proof}[Proof of Lemma \ref{lem: measure}]
For notational convenience, we write $z=(x,y)$.
For $u \in \R^{d_y}$, let $S_u : = \{ z : (u,z) \in S \}$. Define $S_z$ analogously. By Fubini's theorem, $\nu(S_u) = 1$  for $\mu$-a.e. $u$. Pick any $u^*$ for which $\nu(S_{u^*}) =1$, and set $\calZ_0 := \{ z : \mu (S_z)=1 \} \cap S_{u^*}$, which has full $\nu$-measure. By Lemma \ref{lem: nondegeneracy} below, one can choose $z_0^*,\dots,z_{d_x}^* \in \calZ_0$ (with $z_i^* = (x_i^*,y_i^*))$ for which (i) holds. Now, set $\calU_0 := \{ u : \nu(S_u)  = 1 \} \cap S_{z_0^*} \cap \cdots \cap S_{z_{d_x}^*}$, which has full $\mu$-measure. By construction, the sets $\calU_0,\calZ_0$ satisfy (ii). 
\end{proof}

\begin{lemma}
\label{lem: nondegeneracy}
Suppose  Assumption \ref{asmp: dual}~(i) holds. If a Borel subset $\calZ_0 \subset \R^{d_x + d_y}$ has  full $\nu$-measure, then
one can find $d_x+1$ points $(x_0,y_0),\dots,(x_{d_x},y_{d_x}) \in \calZ_{0}$ such that the vectors $(1,x_0^\top)^\top, \dots, (1,x_{d_x}^\top)^\top$ form a basis of $\mathbb{R}^{d_x+1}$.
\end{lemma}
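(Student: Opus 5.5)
We argue by contradiction, using only the invertibility of $\E[XX^\top]$ in Assumption \ref{asmp: dual}~(i) together with $\E[X]=0$. Let $\kappa$ be the law of $X$, and set $V := \mathrm{span}\{(1,x^\top)^\top : (x,y) \in \calZ_0\} \subset \R^{d_x+1}$. If $V = \R^{d_x+1}$, then since $V$ is spanned by vectors of the form $(1,x^\top)^\top$, we can extract a basis from this spanning family. That basis consists of $d_x+1$ vectors $(1,x_i^\top)^\top$ with $(x_i,y_i) \in \calZ_0$, which is exactly the desired conclusion. It therefore suffices to rule out $V \subsetneq \R^{d_x+1}$.

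Suppose $V$ is a proper subspace. Then there is a nonzero $(a,b) \in \R \times \R^{d_x}$ orthogonal to $V$, so that $a + \langle b, x\rangle = 0$ for every $(x,y) \in \calZ_0$. Because $\nu(\calZ_0) = 1$, we get $a + \langle b, X\rangle = 0$ almost surely. Taking expectations and using $\E[X] = 0$ gives $a = 0$, hence $\langle b,X\rangle = 0$ a.s. Then $b^\top \E[XX^\top] b = \E[\langle b,X\rangle^2] = 0$, and invertibility of $\E[XX^\top]$ (which is positive semidefinite) forces $b = 0$. This contradicts $(a,b) \neq 0$, so $V = \R^{d_x+1}$.

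The argument involves no real obstacle. The only step needing care is the passage from ``$\calZ_0$ lies in an affine hyperplane'' to an almost-sure statement about $X$, and this is immediate because $\calZ_0$ has full $\nu$-measure. Measurability is also unproblematic, since the set $\{(x,y) : a+\langle b,x\rangle = 0\}$ is closed.
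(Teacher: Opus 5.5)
Your proof is correct and follows essentially the same contradiction argument as the paper. If the span is proper, there is a nonzero $(a,b)$ with $a+\langle b,x\rangle=0$ on $\calZ_0$, which forces $X$ onto an affine hyperplane and contradicts Assumption \ref{asmp: dual}~(i). Your version is slightly cleaner in two respects. First, you pass to the almost-sure statement through the closed set $\{(x,y): a+\langle b,x\rangle=0\}\supset\calZ_0$, whereas the paper uses the projection $\calX_0$ and must handle it via completion. Second, you spell out how $\E[X]=0$ and the quadratic form $b^\top\E[XX^\top]b$ turn ``concentrated on a hyperplane'' into a contradiction.
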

\begin{proof}[Proof of Lemma \ref{lem: nondegeneracy}]
Let $\calX_{0} := \{ x  : (x,y) \in \calZ_{0} \ \text{for some} \ y \}$. Observe that $\calX_{0}$ has full measure with respect to the completion of the law of $X$. The claim of the lemma states that one can find $x_0,\dots,x_{d_x} \in \calX_{0}$ such that $(1,x_0^\top)^\top, \dots, (1,x_{d_x}^\top)^\top$ form a basis of $\mathbb{R}^{d_x+1}$.
Suppose on the contrary that there are no such $d_x+1$ vectors. Then $\{ (1,x^\top)^\top : x \in \calX_{0} \}$ is contained in a $d_x$-dimensional vector subspace of $\R^{d_x+1}$. Hence, there exists a nonzero vector $w = (w_1,w_2^\top)^\top \in \R^{d_x+1}$ such that $w_1 + w_2^\top x = 0$ for all $x \in \calX_{0}$. Observe that $w_2$ cannot be zero; otherwise $w_1 = 0$, contradicting $w \ne 0$. This, however, implies that $X$ is concentrated on an affine hyperplane, contradicting Assumption \ref{asmp: dual}~(i). 
\end{proof}

We are now ready to prove Theorem \ref{thm: duality}. 
\subsubsection{Proof of Theorem \ref{thm: duality}}
We split the proof into a few steps. 

\underline{Step 1}. We first show that there exist measurable functions $f: \R^{d_y} \to \R, g: \R^{d_y} \to \R^{d_x}$, and $h: \R^{d_x+d_y} \to \R$ such that 
\begin{equation}
\log \frac{d\pi^\varepsilon}{d\tilde{R}}(u,x,y)  = f(u) + \langle g(u),x \rangle + h(x,y).
\label{eq: density R}
\end{equation}
We shall translate the feasible set into countably many unconditional moment constraints. To this end, we first verify the following lemma. Recall the definitions of $\calQ$ in (\ref{eq: feasible set}) and $\tilde{\calP}_1$ in (\ref{eq: modified P1}). For a metric space $M$, let $BL_1(M)$ denote the set of all $1$-Lipschitz functions $M \to [-1,1]$. 
Observe that, when $M$ is $\sigma$-compact, $BL_1(M)$ is separable for the topology of uniform convergence on compacta.
\begin{lemma}
\label{lem: moment condition}
One can find countably many functions  $\{ (\frakf_i,\frakg_i,\frakh_i) : i \in \bN \}$, with $\frakf_i: \R^{d_y} \to \R, \frakg_i: \R^{d_y} \to \R^{d_x}$, and $ \frakh_i: \R^{d_x+d_y} \to \R$, such that $\frakf_i \in BL_1(\R^{d_y}), \frakh_i \in BL_1(\R^{d_x+d_y})$, and each coordinate of $\frak{g}_i$ belongs to $BL_1(\R^{d_y})$, for all $i \in \bN$, and that for a given $\pi \in \tilde{\calP}_1$,
\begin{equation}
\label{eq: moment condition}
\pi \in \calQ \Longleftrightarrow 
\begin{cases}
 \int_{\R^{d_y} \times \R^{d_x+d_y}} \frakf_i (u) \,d\pi (u,x,y) = \int \frakf_i \, d\mu \\
\int_{\R^{d_y} \times \R^{d_x+d_y}} \langle \frakg_i(u), x \rangle \,d\pi(u,x,y) = 0,\\
\int_{\R^{d_y} \times \R^{d_x+d_y}} \frakh_i(x,y) \,d\pi(u,x,y) = \int \frakh_i \, d\nu \\
\end{cases}
\forall i \in \bN.
\end{equation}
\end{lemma}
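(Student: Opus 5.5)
Pick countable families that are dense for uniform convergence on compacta: $\{\frakf_i\}\subset BL_1(\R^{d_y})$ and $\{\frakh_i\}\subset BL_1(\R^{d_x+d_y})$. For the $\frakg_i$, take all vectors of the form $\psi e_k$, where $\psi$ ranges over a countable dense subset of $BL_1(\R^{d_y})$ and $e_k$ is a standard basis vector of $\R^{d_x}$; each coordinate is then in $BL_1(\R^{d_y})$. Enumerate all resulting triples by $i\in\bN$, padding with zero functions where needed.

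For the implication ``$\Rightarrow$'', let $\pi\in\calQ$. The marginal conditions are immediate because $\pi\in\Pi(\mu,\nu)$. For the middle condition, $\|x\|\in L^1(\pi)$ and $\frakg_i$ is bounded, so $\langle \frakg_i(u),x\rangle\in L^1(\pi)$. Disintegrating as in (\ref{eq: rcd}) gives
\[
\int \langle \frakg_i(u),x\rangle\,d\pi=\int \Big\langle \frakg_i(u),\int x\,d\pi_u(x,y)\Big\rangle d\mu(u)=0 .
\]

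For ``$\Leftarrow$'', let $\pi\in\tilde\calP_1$ satisfy (\ref{eq: moment condition}).
\begin{enumerate}
\item[(1)] \emph{Marginals.} Any $\varphi\in BL_1$ is a pointwise limit of a subsequence of the dense family with uniform bound $1$. By dominated convergence, $\int\varphi(u)\,d\pi=\int\varphi\,d\mu$ for all $\varphi\in BL_1(\R^{d_y})$. Bounded Lipschitz functions determine measures, so the $U$-marginal of $\pi$ is $\mu$; likewise the $(X,Y)$-marginal is $\nu$. Hence $\pi\in\Pi(\mu,\nu)$ and the disintegration $\pi_u$ exists.
\item[(2)] \emph{Extension to all of $BL_1$.} Set $m(u):=\int x\,d\pi_u(x,y)$. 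It is defined $\mu$-a.e. and lies in $L^1(\mu;\R^{d_x})$, since $\E\|\tilde X\|<\infty$. The middle condition says $\int \psi(u)\, m_k(u)\,d\mu(u)=0$ for every $\psi$ in the countable dense set and every $k$. Approximate an arbitrary $\psi\in BL_1$ pointwise with bound $1$ and use dominated convergence (dominating function $\|m\|\in L^1(\mu)$). This gives $\int\psi\, m_k\,d\mu=0$ for all $\psi\in BL_1(\R^{d_y})$.
\item[(3)] \emph{Conclusion.} The finite signed measure $m_k\,d\mu$ integrates every bounded Lipschitz function to zero, so it is the zero measure. Indeed, bounded Lipschitz functions approximate indicators of closed sets boundedly and pointwise, and a $\pi$-system argument finishes. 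Hence $m=0$ $\mu$-a.e., i.e.\ $\pi\in\calQ$.
\end{enumerate}

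The main point needing care is step (2): passing the middle constraint from the countable family to all test functions. This works only because $\tilde{\calP}_1$ guarantees $\|x\|\in L^1(\pi)$, which supplies the dominating function, and because the dense family is chosen with uniform bound $1$, so that pointwise bounded approximation is available. Separability of $BL_1$ on the $\sigma$-compact spaces, as noted just before the lemma, provides the countable families.
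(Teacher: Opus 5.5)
Your proposal is correct and follows essentially the same route as the paper: countable dense subfamilies of $BL_1$ (for uniform convergence on compacta) handle the marginal constraints, and $\frakg_{i,j}=\frakf_i e_j$ handles the mean-independence constraint. From there, dominated convergence (using $\|x\|\in L^1(\pi)$) passes from the countable family to all of $BL_1(\R^{d_y})$, and approximating indicators of closed sets plus a $\pi$-$\lambda$ argument shows that the vector measure $A\mapsto\int \mathbbm{1}_A(u)\,x\,d\pi(u,x,y)=\int_A m\,d\mu$ vanishes; your version is only slightly more explicit, writing this measure through the conditional mean $m(u)$ and spelling out the ``$\Rightarrow$'' direction that the paper calls trivial.
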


\begin{proof}[Proof of Lemma \ref{lem: moment condition}]
The first and third constraints in (\ref{eq: moment condition}) ensure that $\pi \in \Pi(\mu,\nu)$. Constructing such functions $\frakf_i$ and $\frakh_i$ is standard. For example, take $\{ \frakf_i : i \in \bN \}$ to be a countable dense subset for $BL_1(\R^{d_y})$ with respect to the topology of uniform convergence on compacta. Then the first constraint in (\ref{eq: moment condition}) holds for all $i \in \bN$ if and only if the first marginal of $\pi$ agrees with $\mu$. The construction of $\frakh_i$ is similar. 

The second constraint in (\ref{eq: moment condition}) ensures that the mean-independence constraint in (\ref{eq: entropicPrimal}) holds. Observe that
\[
\begin{split}
\int_{\R^{d_x+d_y}} x \, d\pi_u (x,y) = 0 \ \text{$\mu$-a.e. $u$} &\Longleftrightarrow 
\int_{\R^{d_y} \times \R^{d_x+d_y}} \mathbbm{1}_A(u)  x \, d\pi (u,x,y) = 0, \ \forall A \in \calB(\R^{d_y}) \\
&\Longleftrightarrow \int_{\R^{d_y} \times \R^{d_x+d_y}} f(u)  x \, d\pi (u,x,y) = 0, \ \forall f \in BL_1(\R^{d_y}).  
\end{split}
\]
The first equivalence follows by the definition of the conditional expectation. To verify the second equivalence, define a (vector-valued) signed measure on $\calB(\R^{d_y})$ by $\rho (A) := \int_{\R^{d_y} \times \R^{d_x+d_y}} \mathbbm{1}_A(u)  x \, d\pi (u,x,y)$.  The ``$\Rightarrow$''direction is trivial. For the ``$\Leftarrow$'' direction, for any closed $A \subset \R^{d_y}$, one can approximate $\mathbbm{1}_A$ by bounded Lipschitz functions to obtain $\rho(A)=0$. Applying Dynkin's $\pi$-$\lambda$ theorem, we conclude $\rho(A)=0$ for all $A \in \calB(\R^{d_y})$. 

Let $\{e_1, \dots, e_{d_x}\}$ be the standard basis in  $\mathbb{R}^{d_x}$, and take $\frakg_{i,j} = \frakf_i \cdot e_j$. Observe that, by the dominated convergence theorem, 
\[
\begin{split}
&\int_{\R^{d_y} \times \R^{d_x+d_y}} f(u)  x \, d\pi (u,x,y) = 0, \ \forall f \in BL_1(\R^{d_y})  \\
&\Longleftrightarrow \int_{\R^{d_y} \times \R^{d_x+d_y}} \langle \frakg_{i,j}(u), x \rangle \,d\pi(u,x,y) = 0, \ \forall i \in \bN, \forall j \in \{ 1,\dots,d_x \}. 
\end{split}
\]
This completes the proof. 
\end{proof}

For each $n$, define the set 
\[
\calQ_n := \left\{\pi \in \tilde{\calP}_1 : \text{(\ref{eq: moment condition}) holds all $i \le n$} \right\}.
\]
Each $\calQ_n$ is nonempty, convex, closed in $\tilde{\mathsf{W}}_1$ (cf. Theorem 6.9 in \cite{villani2009optimal}), and nonincreasing in $n$ with $\bigcap_{n=1}^\infty \calQ_n = \calQ$.  
Now, since $Q \mapsto \KL{Q}{\tilde{R}}$ is strictly convex (on its domain) and coercive in $\tilde{\mathsf{W}}_1$ (the coercivity is guaranteed by Assumption \ref{asmp: dual}~(ii)), by Lemma \ref{lem: minimizer}, 
there exists a (unique) $\pi_n \in \calQ_n$ such that
\[
\KL{\pi_n}{\tilde{R}} = \inf_{\pi \in \calQ_n} \KL{\pi}{\tilde{R}}. 
\]
As Condition (\ref{eq: csiszar}) above holds for $\calQ = \calQ_n, R=\tilde{R}$, $Q=\pi_n$, and $\phi_i$'s suitably chosen,  we can apply Lemma \ref{lem: csiszar} to conclude that there exist $(f_n,g_n,h_n) \in L^\infty(\mu) \times L^\infty(\mu;\R^{d_x}) \times L^\infty(\nu)$ such that
\[
\log \frac{d\pi_n}{d\tilde{R}} (u,x,y)  = f_n(u) + \langle g_n(u),x \rangle + h_n(x,y).
\]
These are constructed as follows: $f_n$ is a linear combination of $\{ 1,\frakf_1,\dots,\frakf_n \}$, and $g_n,h_n$ are linear combinations of $\{ \frakg_i : i \le n\}$ and $\{ \frakh_i : i \le n \}$, respectively. 

Arguing exactly as in the proof of \cite[Proposition 1.17]{nutz2021introduction}, we have
\begin{align}
&\pi_n \to \pi^\varepsilon \ \text{in total variation}, \ \text{i.e.}, \ \frac{d\pi_n}{d\tilde{R}} \to \frac{d\pi^\varepsilon}{d\tilde{R}} \ \text{in $L^1(\tilde{R})$}, \label{eq: tv conv} \\
&\log \frac{d\pi_n}{d\tilde{R}} \to \log \frac{d\pi^\varepsilon}{d\tilde{R}} \ \text{in $L^1(\pi^\varepsilon)$}.  \label{eq: L1}
\end{align}
In \cite[Proposition 1.17]{nutz2021introduction}, $\calQ_n$ is assumed to be closed in total variation, which does not hold unless $X$ is bounded in our case, but the said assumption is used to guarantee the existence of entropic projection $\pi_n$ of $\tilde{R}$ onto $\calQ_n$, which was established via a different route in our case. Given the existence of $\pi_n$, the argument in the proof of \cite[Proposition 1.17]{nutz2021introduction} goes through verbatim. 

Now, by (\ref{eq: tv conv}), after passing to a subsequence if necessary and using $\tilde{R} \sim R$, we conclude 
\begin{equation}
F(u,x,y):=\log \frac{d\pi^\varepsilon}{d\tilde{R}}(u,x,y) = \lim_{n \to \infty} \big( f_n(u) + \langle g_n(u),x \rangle + h_n(x,y) \big) \in [-\infty,\infty)
\label{eq: limit}
\end{equation}
for $R$-a.e. $(u,x,y)$. Recall that the feasible set $\calQ$ is convex and that $\pi^\varepsilon$ minimizes $\KL{\pi}{\tilde{R}}$ over all $\pi \in \calQ$. Since $\KL{R}{\tilde{R}} < \infty$,  Corollary 1.13 of \cite{nutz2021introduction} yields that $F \in L^1(R)$, which implies that $F$ is finite $R$-a.e.
Let $S \subset \R^{d_x+d_y}$ be the set of full $R$-measure on which the limit (in $\R$) on the right-hand side of (\ref{eq: limit}) exists. 

We invoke Lemma \ref{lem: measure} above to find sets $\calU_0 \subset \R^{d_y}, \calZ_0 \subset \R^{d_x+d_y}$ with $\mu(\calU_0)=\nu(\calZ_0)=1$ and points $u^* \in \R^{d_y}, (x_i^*,y_i^*) \in \R^{d_x+d_y} \ (i \in \{ 0,1,\dots,d_x \})$ for which (i) and (ii) in Lemma \ref{lem: measure} hold. Recall $S_0 = (\calU_0 \times \calZ_0) \cap S$. 
If we define 
\[
\begin{split}
&\tilde{f}_n(u) = f_n(u) - f_n(u^*), \\
&\tilde{g}_n(u) = g_n(u) - g_n(u^*), \\
&\tilde{h}_n(x,y) = h_n(x,y) + f_n(u^*) + \langle g_n(u^*),x \rangle,
\end{split}
\]
then $\tilde{f}_n(u^*)=0, \tilde{g}_n(u^*)=0$, and 
\[
F_n(u,x,y):=f_n(u)+\langle g_n(u),x \rangle + h_n(x,y) \\
=\tilde{f}_n(u)+\langle \tilde{g}_n(u),x \rangle + \tilde{h}_n(x,y).
\]
For notational convenience, relabel $(\tilde{f}_n,\tilde{g}_n,\tilde{h}_n)$ as $(f_n,g_n,h_n)$. 
Set $\calU_{00} := \{ u \in \calU_0 : (u,x,y) \in S_0 \ \text{for some} \ (x,y) \in \calZ_0 \}$ and $\calZ_{00} := \{ (x,y) \in \calZ_0 : (u,x,y) \in S_0 \ \text{for some} \ u \in \calU_0 \}$. These sets need not be Borel measurable, but are analytic and hence $\mu$- and $\nu$-completion measurable, respectively (cf. Chapter 13 in \cite{Dudley_2002}). Observe that $\bar{\mu}(\calU_{00}) = \bar{\nu}(\calZ_{00})=1$, with $\bar{\mu}, \bar{\nu}$ denoting the completions of $\mu, \nu$, respectively. 

Pick any $(x,y) \in \calZ_{00}$.
Since $(u^*, x,y) \in S_0$, we have
\[
F(u^*, x,y) = \lim_{n\to\infty} F_n(u^*, x,y) = \lim_{n \to \infty} h_n(x,y).
\]
So let us define
\[
h(x,y) := 
\begin{cases}
    F(u^*, x,y) & \text{if $(x,y) \in \calZ_{00}$,} \\
    0 & \text{otherwise},
\end{cases}
\]
and $G(u,x,y) := F(u,x,y) - h(x,y)$. On $S_0$, one has
\[
G(u, x,y) = \lim_{n\to\infty} (F_n(u, x,y) - h_n(x,y)) 
= \lim_{n\to\infty} \big ( f_n(u) + \langle g_n(u),x \rangle \big ).
\]
Let $H_n(u) := (f_n(u), g_n(u)^{\top})^{\top}$ be a vector-valued mapping $H_n: \R^{d_y} \to \mathbb{R}^{d_x+1}$, and let $v(x,y) := (1,x^\top)^\top$ be another vector-valued mapping $v: \R^{d_x+d_y} \to \mathbb{R}^{d_x+1}$. We have
\[
G(u,x,y) = \lim_{n \to \infty}\langle H_n(u),v(x,y) \rangle.
\]
Pick any $u \in \calU_{00}$. Observe that $(u,x_i^*,y_i^*) \in S_0$ for all $i \in \{0,1,\dots,d_x \}$. 
Let $V$ be the invertible $(d_x+1) \times (d_x+1)$ matrix whose rows are $v(x_i^*,y_i^*)^\top$, and define
\[
\varphi_n(u) := \begin{pmatrix} \langle H_n(u), v(x_0^*,y_0^*) \rangle \\ \vdots \\ \langle H_n(u), v(x_{d_x}^*,y_{d_x}^*) \rangle \end{pmatrix} = V  H_n(u).
\]
By construction, the limit of  $\varphi_n(u)$ exists for $u \in \calU_{00}$, so define
\[
\varphi(u) := \lim_{n\to\infty} \varphi_n(u).
\]
Finally, define
\[
\big(f(u), g(u)^\top\big)^\top:= \lim_{n\to\infty} H_n(u) = \lim_{n\to\infty} V^{-1} \varphi_n(u) = V^{-1} \varphi(u), \ u \in \calU_{00},
\]
and set $(f(u), g(u)^\top)^\top = 0$ outside $\calU_{00}$. 
The functions $f$ and $g$, as constructed, are $\mu$-completion measurable. They need not be Borel measurable, but one can find Borel measurable $\tilde{f}: \R^{d_y} \to \R$ and $\tilde{g}: \R^{d_y} \to \R^{d_x}$ such that $f$ and $g$ differ from $\tilde{f}$ and $\tilde{g}$, respectively, only on $\mu$-null sets. Likewise, one can find Borel measurable $\tilde{h}: \R^{d_x+d_y} \to \R$ that differs from $h$ only on a $\nu$-null set. For notational convenience, relabel $(\tilde{f},\tilde{g},\tilde{h})$ as $(f,g,h)$. 
We have shown that
\[
\log \frac{d\pi^\varepsilon}{d\tilde{R}}(u,x,y)  = F(u,x,y) = f(u) + \langle g(u),x \rangle + h(x,y)
\]
for $R$-a.e. $(u,x,y)$.

\underline{Step 2}. Next, we shall show that $(f,g,h) \in L^1(\mu) \times L^1(\mu;\R^{d_x}) \times L^1(\nu)$. Recall that $F \in L^1(R)$. By Fubini's theorem, the function $\tilde{F}(x, y) := \int_{\R^{d_y}} F(u, x, y)\, d\mu(u)$ is in $L^1(\nu)$. Since
\[
\begin{split}
\tilde{F}(x, y) &= \int_{\R^{d_y}} \big(f(u) + \langle g(u),x \rangle \big) \,d\mu(u) + h(x, y) 
\end{split}
\]
we see that $K(x):=\int_{\R^{d_y}} \big(f(u) + \langle g(u),x \rangle \big) \,d\mu(u)$ is finite for all $(x,y)$ on a set of full $\nu$-measure. 

Now, by Lemma \ref{lem: nondegeneracy}, one can find $d_x+1$ points $x_0, x_1, \dots, x_{d_x}$ such that $K(x_i)$ are finite for all $i \in \{0,\dots,d_x\}$, and that $v_i := x_i - x_0$ ($i \in \{ 1,\dots,d_x\}$) form a basis of $\R^{d_x}$. 
Since
$K(x_i) - K(x_0) = \int_{\R^{d_y}} \langle g(u),v_i \rangle \, d\mu(u)$,
we see that $\langle g(\cdot),v_i \rangle \in L^1(\mu)$ for all $i \in \{1,\dots,d_x\}$. 
This implies that $g \in L^1(\mu;\R^{d_x})$ and  $f \in L^1(\mu)$. Finally, since 
$
|h(x,y)| \le |\tilde{F}(x,y)| + \| f \|_{L^1(\mu)} + \| g \|_{L^1(\mu)} \| x \|$, 
$\tilde{F} \in L^1(\nu)$, and $\E[\|X\|] < \infty$, 
we conclude that $h \in L^1(\nu)$. In addition, since $|\langle g(u),x\rangle| \le |F(u,x,y)| + |f(u)| + |h(x,y)|$ and $F \in L^1(\pi^\varepsilon)$ by (\ref{eq: L1}), we have that $\langle g(u),x\rangle$ is integrable under $\pi^\varepsilon$. 

We have shown that there exist $(f,g,h) \in L^1(\mu) \times L^1(\mu;\R^{d_x}) \times L^1(\nu)$ such that (\ref{eq: density R}) holds. 
Since
\[
\begin{split}
\log \frac{d\pi^\varepsilon}{dR}(u,x,y) &= \log \frac{d\pi^\varepsilon}{d\tilde{R}}(u,x,y) + \log \frac{d\tilde{R}}{dR}(u,x,y) \\
&= f(u) + \langle g(u),x \rangle + h(x,y) - \log \alpha - \frac{1}{\varepsilon}c(u,y),
\end{split}
\]
the expression (\ref{eq: density}) holds with $f^\varepsilon = \varepsilon(f-\log \alpha), g^\varepsilon = \varepsilon g$, and $h^\varepsilon = \varepsilon h$. 

\underline{Step 3}. Finally, we establish the conclusion of the theorem. Observe that
\[
\begin{split}
    \varepsilon \KL{\pi^{\varepsilon}}{R} &=  \varepsilon \int \log\left(\frac{d\pi^{\varepsilon}}{dR}\right) \,d\pi^{\varepsilon} \\
    &= \int  \big ( f^\varepsilon (u) + \langle g^\varepsilon (u),x \rangle + h^\varepsilon(x,y) -c(u,y) \big)\, d\pi^{\varepsilon}(u,x,y) \\
    &=\int f^\varepsilon \, d\mu + \int h^\varepsilon \, d\nu - \int c \, d\pi^\varepsilon. 
\end{split}
\]
For the third equality, we used the fact that, for $(U,\tilde{X},\tilde{Y}) \sim \pi^\varepsilon$,  we have $\E[|\langle g^\varepsilon (U),\tilde{X}\rangle|] < \infty$ as we have verified before and
\[
\int \langle g^\varepsilon (u),x \rangle \, d\pi^\varepsilon(u,x,y)=\E\big[\langle g^\varepsilon (U),\tilde{X}\rangle\big] =\E\big[\langle g^\varepsilon (U), \E[\tilde{X} \mid U] \rangle\big] = 0.
\]
Observe that $\iota^\varepsilon(f^\varepsilon,g^\varepsilon,h^\varepsilon)=0$ as $\pi^\varepsilon$ is a probability measure. 
Conclude that
\[
\begin{split}
\mathsf{T}^\varepsilon (\mu,\nu) &= \int c \,d\pi^\varepsilon + \varepsilon\KL{\pi^\varepsilon}{R} \\
&= \int f^\varepsilon \, d\mu + \int h^\varepsilon \, d\nu  \\
&= D^\varepsilon (f^\varepsilon,g^\varepsilon,h^\varepsilon) \le \mathsf{D}^\varepsilon(\mu,\nu). 
\end{split}
\]
Combining the weak duality from Proposition \ref{prop: weak duality}, the strong duality holds, and $(f^\varepsilon,g^\varepsilon,h^\varepsilon)$ solve the dual problem. This completes the proof. 
\qed

\subsection{Proof of Proposition \ref{prop: uniqueness}}
Observe that the dual objective can be written as
\[
\begin{split}
D^\varepsilon (f,g,h) &= \int \big(f(u)+\langle g(u),x\rangle + h(x,y) \big) \, dR(u,x,y) \\
&\quad - \varepsilon \int e^{\frac{1}{\varepsilon} (f(u)+\langle g(u),x\rangle + h(x,y) -c(u,y))} \, dR(u,x,y) + \varepsilon
\end{split}
\]
for $(f,g,h) \in L^1(\mu) \times L^1(\mu;\R^{d_x}) \times L^1(\nu)$. 
If $(f,g,h), (\tilde f,\tilde g,\tilde h)$ both maximize the dual objective, then
by strict convexity of the exponential function, we have
\[
f(u) + \langle g(u), x\rangle + h(x,y) = \tilde{f}(u) + \langle \tilde{g}(u),x\rangle + \tilde{h}(x,y) \quad \text{$R$-a.e. $(u,x,y)$}.
\]
Define
\[
\frakf(u) = \tilde{f}(u) - f(u), \quad \frakg(u) = \tilde{g}(u) - g(u), \quad \frakh(x,y) = \tilde{h}(x,y) - h(x,y).
\]
We have
\[
\frakh(x,y) = - \left( \frakf (u) + \langle \frakg(u), x \rangle \right) \quad \text{$R$-a.e. $(u,x,y)$}.
\]
Since the left-hand side does not depend on $u$, there exists $u_0 \in \R^{d_y}$ such that
\[
\frakh(x,y)=-a-\langle v,x \rangle \quad \text{$\nu$-a.e. $(x,y)$,}
 \]
where $a=\frakf(u_0)$ and $v = \frakg(u_0)$. This implies that
 \[
(\frakf(u) - a) + \langle \frakg(u)- v,x \rangle = 0 \quad \text{$R$-a.e. $(u,x,y)$}.
\]
Under Assumption \ref{asmp: dual}~(i) (cf. Lemma \ref{lem: nondegeneracy}),  one has $\frakf - a = 0$ and $\frakg- v = 0$ $\mu$-a.e.
\qed

\subsection{Proof of Proposition \ref{prop: converse}}
    Observe that 
    \[
    \int \left(\log \frac{d\pi}{dR}\right)^- \, d\pi  = \int \left(\log \frac{d\pi}{dR}\right)^- \frac{d\pi}{dR} \, dR <  \infty
    \]
    because $a\log a \ge -e^{-1}$ for $a \ge 0$. 
    Since
    \[
        (\langle g(u),x\rangle)^- \le \varepsilon \left(\log \frac{d\pi}{dR}\right)^- + \left(f(u) + h(x,y) - c(u,y)\right)^+,
    \]
    it follows that $(\langle g(u),x\rangle)^-  \in L^1(\pi)$. Arguing as in the proof of Proposition \ref{prop: weak duality}, we see that $\langle g(u),x\rangle \in L^1(\pi)$,  which ensures $\int_{\R^{d_y} \times \R^{d_x+d_y}} \langle g(u),x\rangle d\pi(u,x,y) = 0$ as $\pi \in \calQ$. The rest follows from weak duality. 
    \qed
    

\subsection{Proofs for Section \ref{sec: regularity}}
Let $\kappa$ denote the marginal distribution of $X$, so that $d\nu = d\nu_x d\kappa$ (recall that $\nu_x$ denotes the conditional distribution of $Y$ given $X$).

\begin{proof}[Proof of Proposition \ref{prop: continuity}]
 Let $y_1, y_2$ be such that $h^\varepsilon(x, y_1)  \ge h^\varepsilon(x, y_2)$. Let $H(u, x, y) := f^\varepsilon(u) + \langle g^\varepsilon(u),x\rangle - c(u,y)$, so that
$h(x,y_1) = - \varepsilon \log \int e^{H(u,x,y_1)/\varepsilon} \,d\mu(u) > -\infty$. Then
\[
\begin{split}
h^\varepsilon(x, y_2) &= - \varepsilon \log \int e^{H(u,x,y_1)/\varepsilon} \cdot  e^{(c(u,y_1)-c(u,y_2))/\varepsilon} \,d\mu(u) \\
 &\ge -\sup_{u \in \calU}|c(u,y_1)-c(u,y_2)| + h^\varepsilon(x,y_1)\\
&> -\infty.
\end{split}
\]
Now, we observe
\[
\begin{split}
0 \le h^\varepsilon(x, y_1) - h^\varepsilon(x, y_2) & =  \varepsilon \log \int e^{H(u,x,y_2)/\varepsilon} \,d\mu(u) - \varepsilon \log \int e^{H(u,x,y_1)/\varepsilon}  \,d\mu(u) \\
 &= \varepsilon \log \int e^{H (u,x,y_1)/\varepsilon} \cdot  e^{(c(u,y_1)-c(u,y_2))/\varepsilon}  \,d\mu(u) \\
 &\qquad - \varepsilon \log \int e^{H(u,x,y_1)/\varepsilon}  \,d\mu(u) \\
&\le \sup_{u \in \calU}(c(u,y_1)-c(u,y_2)), 
\end{split}
\] 
completing the proof.
\end{proof}

\begin{proof}[Proof of Proposition \ref{prop: convexity}]
For notational convenience, we set $\varepsilon = 1$ and omit the dependence on $\varepsilon$. The general case follows analogously. 
Let $w(u) := e^{f(u)-c(u,y)}$ (recall that $y$ is fixed)  and  $I(x) := \int e^{\langle g(u), x \rangle} w(u) \,d\mu(u)$.  Since $h(x, y) = -\varepsilon \log I(x)$, to show that $h(x,y)$ is concave in $x$, it suffices to verify that $\log I$ is convex. This follows from the fact that $I(x)$ has the form of a Laplace transform (cf. \cite[Theorem 7.1]{barndorff2014information}), but we include a self-contained proof for completeness. Pick any $\lambda \in (0,1)$. By H\"older's inequality, 
\[
\begin{split}
I(\lambda x_1 + (1-\lambda)x_2) &= \int e^{\langle g(u), \lambda x_1 + (1-\lambda)x_2 \rangle} w(u) \,d\mu(u) \\
 &= \int \left( e^{\langle g(u), x_1 \rangle} w(u) \right)^\lambda \left( e^{\langle g(u), x_2 \rangle} w(u) \right)^{1-\lambda} \,d\mu(u) \\
&\le \left( \int e^{\langle g(u), x_1 \rangle} w(u) \,d\mu(u) \right)^\lambda \left( \int e^{\langle g(u), x_2 \rangle} w(u) \,d\mu(u) \right)^{1-\lambda}\\
&= I(x_1)^\lambda I(x_2)^{1-\lambda}.
\end{split}
\]
This implies convexity of $\log I$. 

To prove the second claim, define $D := \{x \in \R^{d_x}: h(x, y) > -\infty\}$, which is convex. Let $S$ be the convex hull of $\calX$. Since $\kappa(D) = 1$, we see that $S \subset \bar{D}$, where $\bar{D}$ denotes the closure of $D$. 
 We shall show that $S$ has nonempty interior. Indeed, $0$ is in the interior of $S$, since otherwise there exists a nonzero vector $v \in \mathbb{R}^{d_x}$ such that $\langle v,x \rangle \ge 0$ for all $x \in S$ by the separating hyperplane theorem (cf. \cite[Exercise 1.9]{brezis2011functional}), which implies $\langle v,X \rangle=0$ a.s. because $\E[X]=0$, contradicting Assumption \ref{asmp: dual}~(i).  

Next, we shall show that $D$  has nonempty interior. Otherwise,  \cite[Theorem 6.2.6]{Dudley_2002} yields that $\bar{D}$ is contained in some hyperplane, which contradicts the fact that $S$ has nonempty interior. Now, since $D$ has nonempty interior,  the interior of $D$ agrees with the interior of $\bar{D}$ (see, e.g., \cite[Exercise 1.7]{brezis2011functional}). As such, the interior of $S$ is contained in the interior of $D$. The conclusion follows from the fact that a convex function is locally Lipschitz on the interior of its domain \cite[Theorem 3.1.11]{nesterov2018lectures}. 
\end{proof}

\begin{proof}[Proof of Proposition \ref{prop: exponential}]
Again, we set $\varepsilon = 1$ and omit the dependence on $\varepsilon$.
 Let $w_u(x,y) := e^{h(x,y)-c(u,y)}$, so equation (\ref{eq: implicit}) reduces to
 \begin{equation}
 \int x e^{\langle \theta,x\rangle} w_u(x,y) \, d\nu(x,y) = 0. \label{eq: implicit 2}
 \end{equation}
Define a Borel measure $\tilde{\kappa}_u$ on $\R^{d_x}$ by
\begin{equation}
\frac{d\tilde{\kappa}_u(x)}{d\kappa(x)} := \int w_u(x,y) \, d\nu_x  (y) =: \tilde{w}_u(x).
\label{eq: kappa}
\end{equation}
Since $w_u(x,y)$ is strictly positive for all $(u,y)$ and for $\kappa$-a.e. $x$, $\tilde{\kappa}_u$ and $\kappa$ are mutually absolutely continuous. 
We shall verify that $\tilde{\kappa}_u$ is a finite measure for all $u \in \R^{d_y}$. Observe that, for any $u_0, u \in \R^{d_y}$, 
\[
\tilde{w}_u(x)\le e^{\sup_{y \in \calY}|c(u_0,y)-c(u,y)|} \tilde{w}_{u_0}(x).
\]
As such, it suffices to verify that $\tilde{\kappa}_{u_0}$ is a finite measure for some $u_0$. 
Recall that ($\pi = \pi^{\varepsilon}$ with $\varepsilon=1$)
\[
d\pi (u,x,y)= w_u(x,y) e^{f(u)+\langle g(u),x \rangle} \, d\mu (u)d\nu_x(y) d\kappa(x).
\]
Since the first marginal of $\pi$ agrees with $\mu$ by construction, one can choose $u_0 \in \calU$ such that $f(u_0)$ and $g(u_0)$ are both finite and the conditional distribution $\pi_{u_0}$ agrees with 
\[
d\pi_{u_0}(x,y) = w_{u_0}(x,y) e^{f(u_0)+\langle g(u_0),x \rangle} d\nu_x(y) d\kappa(x),
\]
that is,
\[
w_{u_0}(x,y) d\nu_x(y) d\kappa(x) =  e^{-f(u_0)-\langle g(u_0),x \rangle} d\pi_{u_0}(x,y).
\]
Since $\calX$ is compact, 
\[
\begin{split}
\tilde{\kappa}_{u_0} (\R^{d_x}) &= \int_{\R^{d_x+d_y}} e^{-f(u_0)-\langle g(u_0),x \rangle} d\pi_{u_0}(x,y) \\
&\le e^{-f(u_0) + \| g(u_0) \| \sup_{x \in \calX}\|x\|} < \infty.
\end{split}
\]

We have verified that $\tilde{\kappa}_u$ is a finite measure for all $u \in \R^{d_y}$. From now on, we pick and fix any $u \in \R^{d_y}$ and omit the dependence on $u$. 
Define
\[
p_{\theta}(x) := e^{\langle \theta,x \rangle - A(\theta)} \quad \text{with} \quad A(\theta) := \log \int  e^{\langle \theta, x \rangle} 
\,d\tilde{\kappa}(x).
\]
Since $\tilde{\kappa}$ is supported in $\calX$, $A(\theta)$ is finite for all $\theta \in \R^{d_x}$, and $\{ p_\theta : \theta \in \R^{d_x} \}$ constitutes an exponential family of densities with base measure $\tilde{\kappa}$. Furthermore, Assumption \ref{asmp: dual}(i) guarantees that there is no nonzero vector $v \in \R^{d_x}$ such that $\langle v,x \rangle$ is constant $\kappa$-a.e. (or equivalently  $\tilde{\kappa}$-a.e.), so the exponential family $\{ p_\theta : \theta \in \R^{d_x} \}$ is \textit{minimal} \cite[p.40]{wainwright2008graphical}. To establish the conclusion of the proposition, we will use theory of exponential families (cf. Chapter 3 in \cite{wainwright2008graphical}).

By \cite[Proposition 3.1]{wainwright2008graphical}, $A(\theta)$ has derivatives of all orders on $\R^{d_x}$. Observe that 
\[
\text{(\ref{eq: implicit 2})} \Longleftrightarrow \nabla A(\theta) = 0.
\]
By Proposition 3.2 and Theorem 3.3 of \cite{wainwright2008graphical}, $\nabla A$ is a bijection between $\R^{d_x}$ and the interior of the \textit{mean parameter space} $\calM:= \{ \int x \, d\rho(x) : \rho \in \calP_1(\R^{d_x}),  \rho \ll \kappa \}$ (recall that $\tilde{\kappa}$ and $\kappa$ are mutually absolutely continuous). 
Lemma \ref{lem: mean parameter} below shows that the interior of $\calM$ contains $0$.
Hence, there exists a unique $\theta$ with $\nabla A(\theta) = 0$. 
\end{proof}

It remains to prove Lemma \ref{lem: mean parameter} that was used in the proof of Proposition \ref{prop: exponential}. 

\begin{lemma}
\label{lem: mean parameter}
Recall the mean parameter space $\calM = \{ \int x d\rho(x) : \rho \in \calP_1(\R^{d_x}), \rho \ll \kappa \}$. Under Assumption \ref{asmp: dual}~(i), the interior of $\calM$ contains $0$. 
\end{lemma}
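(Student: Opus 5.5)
The plan is to show that $\calM$ is a convex set containing $0$ and not contained in any affine hyperplane, and then that $0$ cannot lie on its boundary. Convexity is immediate: if $\rho_1,\rho_2\ll\kappa$ lie in $\calP_1$, then $\lambda\rho_1+(1-\lambda)\rho_2\ll\kappa$ has mean equal to the corresponding convex combination. Taking $\rho=\kappa$ gives $0=\E[X]\in\calM$, since $\kappa$ has a finite first moment.

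Suppose, for contradiction, that $0$ is not an interior point of $\calM$. There are two cases.

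If $\calM$ has empty interior, then, being convex, it lies in an affine hyperplane. Since $0\in\calM$, this hyperplane is a linear subspace $\{m:\langle v,m\rangle=0\}$ with $v\neq 0$.

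If $\calM$ has nonempty interior but $0\in\partial\calM$, the supporting hyperplane theorem gives some $v\neq 0$ with $\langle v,m\rangle\ge 0$ for all $m\in\calM$.

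Both cases therefore yield $v\ne0$ with $\langle v,m\rangle\ge0$ for every $m\in\calM$.

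The key step is to contradict this by constructing a suitable $\rho$. By Assumption \ref{asmp: dual}~(i), $\langle v,X\rangle$ is not a.s.\ zero. Since $\E\langle v,X\rangle=0$, this gives $\kappa(\{x:\langle v,x\rangle<0\})>0$. Let $A:=\{x:\langle v,x\rangle<0,\ \|x\|\le K\}$, choosing $K$ large enough that $\kappa(A)>0$. Define $\rho:=\kappa(\cdot\cap A)/\kappa(A)$. Then $\rho\ll\kappa$, $\rho$ is compactly supported and hence lies in $\calP_1$, and
\[
\Big\langle v,\int x\,d\rho(x)\Big\rangle=\frac{1}{\kappa(A)}\int_A\langle v,x\rangle\,d\kappa(x)<0,
\]
because the integrand is strictly negative on a set of positive measure. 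This contradicts $\langle v,m\rangle\ge0$ on $\calM$, so $0\in\operatorname{int}\calM$.

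The only delicate point is the case split above: I need to be sure that a boundary point of a convex set in $\R^{d_x}$ always admits a supporting hyperplane, including when the interior is empty. This is standard in finite dimensions, and both cases have been reduced to the same separation inequality.
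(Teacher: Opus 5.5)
Your proof is correct and follows essentially the same route as the paper: you separate $0$ from the convex set $\calM$ with a nonzero $v$, then test against the normalized restriction of $\kappa$ to a set where $\langle v,x\rangle<0$, which contradicts $\E[X]=0$ together with Assumption~(i). The paper phrases the last step as $\E[\langle v,X\rangle \mathbbm{1}_A(X)]\ge 0$ for every Borel $A$ with $\kappa(A)>0$. Your truncation $\|x\|\le K$ is harmless but unnecessary, since $\kappa$ already has a finite first moment.
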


\begin{proof}
Observe that $\calM$ is convex and $0 \in \calM$. Suppose on the contrary that $0$ is not in the interior of $\calM$. Then, by the separating hyperplane theorem, there exists a nonzero vector $v \in \R^{d_x}$ such that $\langle v,x \rangle \ge 0$ for all $x \in \calM$. For any $A \in \calB(\R^{d_x})$ with $\kappa (A) > 0$, we see that $\frac{\E[X\mathbbm{1}_A(X)]}{\kappa(A)} \in \calM$ (choose $\rho$ as $d\rho = \frac{1}{\kappa (A)} \mathbbm{1}_{A} \, d\kappa$). Hence,
\[
\E[ \langle v,X \rangle \mathbbm{1}_{A}(X)] \ge 0, \ \forall A \in \calB(\R^{d_x}).
\]
This implies $\langle v, X \rangle \ge 0$ a.s. and hence $\langle v, X \rangle = 0$ a.s. because $\E[X]=0$. But this contradicts Assumption \ref{asmp: dual}~(i). 
\end{proof}

\begin{proof}[Proof of Theorem \ref{thm: analytic}]
Again, we set $\varepsilon = 1$ and omit the dependence on $\varepsilon$.

(i) Let $A(\theta, u)$ be defined by
\[
\begin{split}
e^{A(\theta, u)} &= \int e^{\langle \theta, x \rangle + h(x,y) - c(u,y)} \,d\nu(x,y) \\
& = e^{-\|u\|^2/2} \underbrace{\int e^{\langle \theta,x\rangle + \langle u,y \rangle + h(x,y)-\|y\|^2/2} \,d\nu(x,y)}_{=:\mathsf{Z}(\theta, u)},
\end{split}
\]
so that $A(\theta,u) = -\|u\|^2/2 + \log \mathsf {Z}(\theta,u)$. Observe that $\log \mathsf{Z}(\theta,u)$ is the log-partition function corresponding to the exponential family $e^{\langle \theta,x\rangle + \langle u,y \rangle - \log \mathsf{Z}(\theta,u)}$ with base measure $e^{h(x,y)-\|y\|^2/2} d\nu(x,y)$. The base measure is finite by a similar argument to the previous proof, and $\mathsf{Z}(\theta,u)$ is finite for all $(\theta,u) \in \R^{d_x+d_y}$ because $\nu$ is compactly supported. 
Now, by \cite[Theorem 7.2]{barndorff2014information}, $\log Z(\theta,u)$ is real analytic on $\R^{d_x+d_y}$, and so is $A(\theta, u)$.

Observe that for $u \in \R^{d_y}$, $g(u)$ is the unique vector in $\R^{d_x}$ satisfying $\nabla_{\theta} A(g(u),u)=0$. Furthermore, one can write
\[
A(\theta, u) = \log \int_{\mathbb{R}^{d_x}} e^{\langle \theta,x \rangle} \,d\tilde{\kappa}_u(x),
\]
where $\tilde{\kappa}_u$ is defined by (\ref{eq: kappa}). 
By \cite[Proposition 3.1(a)]{wainwright2008graphical}, $\nabla_\theta^2 A(\theta, u)$ is symmetric positive semidefinite. Minimality of the exponential family $\{ e^{\langle \theta,x \rangle - A(\theta,u)} : \theta \in \R^{d_x} \}$ (with base measure $\tilde{\kappa}_u$), as verified in the proof of the previous proposition, implies that $\nabla_\theta^2 A(\theta, u)$ is indeed positive definite; see the proof of \cite[Proposition 3.1(b)]{wainwright2008graphical}.  
 Now, since $A$ is real analytic, applying the real analytic implicit function theorem \cite[Theorem 2.3.5]{krantz2002primer},  each coordinate of $g$ is real analytic. 

Furthermore, by definition, $f(u) = - A(g(u), u)$ (which in particular implies that $f$ is everywhere finite). Since $A$ and $g$ are both real analytic, their composition is real analytic. 

(ii) Observe that 
\[
h(x,y) = -\|y\|^2/2 - \log \int e^{\langle g(u),x \rangle +\langle u,y \rangle} e^{f(u)-\|u\|^2/2} \, d\mu(u).  
\]
The right-hand side is finite for all $(x,y) \in \R^{d_x+d_y}$ since $f$ and $g$ are bounded on $\calU$. The conclusion follows from \cite[Theorem 7.2]{barndorff2014information}.
\end{proof}

\section{Proofs for Section \ref{sec: gaussian}}
\label{sec: proof gaussian}

\subsection{Proof of Theorem \ref{thm: gaussian solution}~(i)}

We first verify that the optimal coupling must be Gaussian. 
\begin{lemma}
\label{lem: Gaussian optimality}
If $\mu$ and $\nu$ are both nondegenerate Gaussian, then the optimal coupling $\pi^\varepsilon$  for (\ref{eq: entropicPrimal}) is nondegenerate Gaussian.
\end{lemma}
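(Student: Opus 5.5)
Lemma \ref{lem: Gaussian optimality} is a structural statement and does not need the explicit solution. I would prove it by showing that the Gaussian projection of $\pi^\varepsilon$ is feasible and does no worse, and then appealing to uniqueness.

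\textbf{Step 1: the Gaussian projection is feasible.} Let $\gamma$ be the Gaussian measure with the same mean and covariance as $\pi^\varepsilon$. The covariance exists because $\pi^\varepsilon\in\Pi(\mu,\nu)$ and both marginals have finite second moments. Since $\mu$ and $\nu$ are Gaussian, the $u$-marginal and the $(x,y)$-marginal of $\gamma$ are exactly $\mu$ and $\nu$, so $\gamma\in\Pi(\mu,\nu)$.

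Next I would check the mean-independence constraint. For $(U,\tilde X,\tilde Y)\sim\pi^\varepsilon$, the constraint $\E[\tilde X\mid U]=0$ gives $\E[\tilde X\,\phi(U)]=0$ for every $\phi$ with $\E[|\phi(U)|^2]<\infty$. Taking $\phi$ linear shows $\mathrm{Cov}(\tilde X,U)=0$. Under $\gamma$ the pair $(U,\tilde X)$ is jointly Gaussian with zero cross-covariance, so $U\independent\tilde X$, and $\E[\tilde X]=0$. Hence $\E_\gamma[\tilde X\mid U]=0$ and $\gamma\in\calQ$.

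\textbf{Step 2: the Gaussian projection has no larger cost.} The term $\int c\,d\pi$ depends only on first and second moments, so it takes the same value under $\gamma$ and under $\pi^\varepsilon$.

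For the entropy term, note that $\KL{\pi^\varepsilon}{R}<\infty$ (take $R$ itself as a competitor) forces $\pi^\varepsilon\ll\mathrm{Leb}$, since $R$ has a Lebesgue density. Because $R=\mu\otimes\nu$ is nondegenerate Gaussian, $\log\frac{dR}{d\mathrm{Leb}}$ is a quadratic polynomial. Therefore
\[
\KL{\pi}{R}=-H(\pi)-\int\log\tfrac{dR}{d\mathrm{Leb}}\,d\pi,
\]
where the second term again depends only on the first two moments. Among all laws with a given covariance, the Gaussian maximizes differential entropy, so $H(\gamma)\ge H(\pi^\varepsilon)$, with equality only if $\pi^\varepsilon=\gamma$. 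It follows that $\KL{\gamma}{R}\le\KL{\pi^\varepsilon}{R}$, and so the objective at $\gamma$ is at most the objective at $\pi^\varepsilon$.

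\textbf{Step 3: conclude.} By the uniqueness in Proposition \ref{prop: primal attainment}, $\pi^\varepsilon=\gamma$, so $\pi^\varepsilon$ is Gaussian. It is nondegenerate because a degenerate Gaussian is singular with respect to Lebesgue measure, whereas $\pi^\varepsilon\ll R\sim\mathrm{Leb}$.

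The only delicate point is the entropy comparison. It requires the covariance of $\pi^\varepsilon$ to be finite and $H(\pi^\varepsilon)$ to be well defined rather than $+\infty$. Both follow from the finite second moments and from $\KL{\pi^\varepsilon}{R}<\infty$.
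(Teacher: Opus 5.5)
Your proposal is correct and follows essentially the same route as the paper: replace $\pi^\varepsilon$ by the Gaussian with the same first two moments, check feasibility via uncorrelated-implies-independent, note that the quadratic cost is unchanged while the entropy does not decrease, and conclude from uniqueness of the primal optimizer. The differences are cosmetic. The paper writes $\KL{\pi}{\mu\otimes\nu}=\Ent(\mu)+\Ent(\nu)-\Ent(\pi)$ and gets the entropy inequality from $\KL{\pi^\varepsilon}{\pi_G}\ge 0$, whereas you use the quadratic log-density of $R$ and the maximum-entropy principle, and you spell out the nondegeneracy and finiteness of $\Ent(\pi^\varepsilon)$ that the paper leaves implicit.
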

\begin{proof}[Proof of Lemma \ref{lem: Gaussian optimality}]
For a Borel probability measure $\rho$ on a finite dimensional Euclidean space with $d\rho (x) = f(x) \, dx$, let $\Ent (\rho)$ denote the differential entropy, 
\[
\Ent (\rho) :=-\int f(x) \log f(x) \, dx.
\]
Any coupling $\pi \in \Pi(\mu,\nu)$ with $\KL{\pi}{\mu \otimes \nu} < \infty$
is absolutely continuous with respect to Lebesgue measure, and the KL divergence decomposes as 
\[
\KL{\pi}{\mu\otimes\nu} = \Ent(\mu) + \Ent(\nu) - \Ent(\pi).
\]
 Since $\Ent(\mu)$ and $\Ent(\nu)$ depend only on the marginals,  the entropic VQR problem (\ref{eq: entropicPrimal}) is equivalent to
\begin{equation}
\label{eq: equivalent formulation}
\inf_{\pi \in \Pi(\mu,\nu)} \left \{ \E[\|U-\tilde{Y}\|^2/2] - \varepsilon \Ent (\pi) : (U,\tilde{X},\tilde{Y}) \sim \pi, \ \E[\tilde{X} \mid U ] = 0 \ \text{a.s.} \right \}.
\end{equation}
Let $\pi_G$ be the Gaussian measure that shares the same mean vector and covariance matrix as $\pi^\varepsilon$. Since the marginals are Gaussian, $\pi_G$ is a coupling for $(\mu,\nu)$. We shall show that $\pi_G = \pi^\varepsilon$. 
To simplify notation, below, $\E_{\pi}$ means that the expectation is taken with respect to $(U,\tilde{X},\tilde{Y}) \sim \pi$. 

First, $\E_{\pi^\varepsilon}[\tilde{X} \mid U] = 0$ implies $\E_{\pi^\varepsilon}\big[\tilde{X} U^\top\big] = 0$, and by construction, $\E_{\pi_G}\big [\tilde{X}U^\top \big] = 0$. 
For a multivariate Gaussian distribution, uncorrelatedness implies independence, so $\tilde{X} \independent U$ under $\pi_G$. This implies $\mathbb{E}_{\pi_G}[\tilde{X} \mid U] = 0$ a.s. and that $\pi_G$ is feasible for (\ref{eq: equivalent formulation}). 

Next, since $\pi^\varepsilon$ and $\pi_G$ share the same mean vector and covariance matrix, we have $\E_{\pi_G}[\|U-\tilde Y\|^2] = \E_{\pi^\varepsilon}[\|U-\tilde Y\|^2]$.
In addition, identifying a measure and its Lebesgue density, we have 
\[
\begin{split}
0 \le \KL{\pi^\varepsilon}{\pi_G} &= \int \pi^\varepsilon (w) \log \frac{\pi^\varepsilon (w)}{\pi_G(w)} \, dw \\
&=-\Ent (\pi^\varepsilon) + \E_{\pi^\varepsilon}\left [-\log \pi_G(U,\tilde{X},\tilde{Y}) \right] \\
&= -\Ent (\pi^\varepsilon) + \underbrace{\E_{\pi_G}\left [-\log \pi_G(U,\tilde{X},\tilde{Y}) \right]}_{=\Ent(\pi_G)},
\end{split}
\]
that is, $\text{Ent}(\pi_G) \geq \text{Ent}(\pi^\varepsilon)$.
Since $\pi^\varepsilon$ is the unique minimizer of (\ref{eq: equivalent formulation}), we conclude $\pi^\varepsilon = \pi_G$.
    \end{proof}

\begin{proof}[Proof of Theorem \ref{thm: gaussian solution}~(i)]
Lemma \ref{lem: Gaussian optimality} implies that $\pi^\varepsilon = N(m, \Gamma_\varepsilon)$ with 
\[
m = \begin{pmatrix}0 \\ 0 \\ m_Y \end{pmatrix} \quad \text{and} \quad \Gamma_\varepsilon = \begin{pmatrix} I_{d_y} & O & \Lambda_{\varepsilon} \\ O & \Sigma_{XX} & \Sigma_{XY} \\ \Lambda_{\varepsilon}^\top & \Sigma_{YX} & \Sigma_{YY} 
\end{pmatrix}
.
\]
 where $\Lambda_{\varepsilon}$ is the only unknown. The matrix $\Gamma_\varepsilon$ is nonsingular because $\pi^\varepsilon$ is nondegenerate. Partition the precision matrix $\Theta := \Gamma_\varepsilon^{-1}$ as 
 \[
\Theta =
 \begin{pmatrix}
\Theta_{UU} & \Theta_{UX} & \Theta_{UY} \\
\Theta_{XU} & \Theta_{XX} & \Theta_{XY} \\
\Theta_{YU} & \Theta_{YX} & \Theta_{YY}
 \end{pmatrix}
=
\begin{pmatrix}
\Theta_{UU} & \Theta_{UZ} \\
\Theta_{ZU} & \Theta_{ZZ}
\end{pmatrix}
 .
 \]

 To find $\Lambda_{\varepsilon}$, we observe that, by Theorem \ref{thm: duality}, $\pi^\varepsilon$ has the expression
\[
\begin{split}
d\pi^\varepsilon(u,x,y) &= \exp\left(\frac{1}{\varepsilon}\left(f^{\varepsilon}(u) + \langle g^\varepsilon(u), x\rangle  +h^{\varepsilon}(x,y) -\frac{1}{2}\|u-y\|^2\right)\right) \, d\mu(u)\,d\nu(x,y) \\
&=\frac{1}{(2\pi)^{(d_x+2d_y)/2}\sqrt{\det (\Sigma)}} \exp\left(\frac{1}{\varepsilon}\left(f^{\varepsilon}(u) + \langle g^\varepsilon(u), x\rangle  +h^{\varepsilon}(x,y) -\frac{1}{2}\|u-y\|^2\right)\right) \\
&\quad \times \exp \left ( -\frac{1}{2} \|u\|^2 - \frac{1}{2}(z-m)^\top \Sigma^{-1}(z-m) \right ) \, du dz,
\end{split}
\]
where $z = (x^\top, y^\top)^{\top}$. 
Inside the exponential function, the only cross term between $u$ and $y$ is 
$u^\top y/\varepsilon$. 
Comparing the densities on both sides, we have
\[
\frac{1}{\varepsilon}u^\top y = - \frac{1}{2}\left(u^\top \Theta_{UY}y  + y^\top \Theta_{YU}u\right) = -u^\top \Theta_{UY}y.
\]
This implies
\[
\Theta_{UY} = - \frac{1}{\varepsilon}I_{d_y}.
\]
In addition, from $\Gamma_\varepsilon \Theta = I_{d_x + 2d_y}$, the product of the second block row of $\Gamma_\varepsilon$ and the first block column of $\Theta$ is zero matrix:  $\Sigma_{XX} \Theta_{XU} + \Sigma_{XY} \Theta_{YU} = O$. 
Substituting $\Theta_{YU} = \Theta_{UY}^\top = -\frac{1}{\varepsilon} I_{d_y}$, we find 
\[
\Theta_{XU} = \frac{1}{\varepsilon} \Sigma_{XX}^{-1} \Sigma_{XY}.
\]

Now, we use the block matrix inversion formulas to relate $\Theta_{UY}$ to  $\Lambda_{\varepsilon}$. Partition $\Gamma_\varepsilon$ as 
\begin{equation}
\Gamma_\varepsilon = 
\begin{pmatrix}
I_{d_y} & B \\ B^\top & \Sigma
\end{pmatrix}
\quad \text{with} \quad B := \begin{pmatrix}
O & \Lambda_{\varepsilon}
\end{pmatrix}.
\label{eq: partition}
\end{equation}
As $\Sigma$ is positive definite, its Schur complement $I_{d_y} - B\Omega B^\top$ is positive definite (see, e.g., \cite[Corollary 3.1]{ouellette1981schur}). 
From the block matrix inversion formula  \cite[Exercise 5.16(b)]{abadir2005matrix}, we have 
\[
\Theta_{UZ} =- (I_{d_y} - B \Omega B^\top)^{-1} B \Omega \quad \text{with} \quad \Omega = 
\begin{pmatrix}
\Omega_{XX} & \Omega_{XY} \\
\Omega_{YX} & \Omega_{YY}
\end{pmatrix}
:=\Sigma^{-1}.
\]
Observe that $B\Omega = \begin{pmatrix}
\Lambda_{\varepsilon}\Omega_{YX} & \Lambda_{\varepsilon}\Omega_{YY}
\end{pmatrix}$ and $I_{d_y} - B \Omega B^\top = I_{d_y} - \Lambda_{\varepsilon} \Omega_{YY} \Lambda_{\varepsilon}^\top$. 
Extracting the block matrix corresponding to $Y$, we obtain
\[
\Theta_{UY} = - (I_{d_y} - \Lambda_{\varepsilon} \Omega_{YY} \Lambda_{\varepsilon}^\top)^{-1} \Lambda_{\varepsilon} \Omega_{YY}.
\]
Since  $\Theta_{UY} = - \frac{1}{\varepsilon}I_{d_y}$, we obtain the equation
\begin{equation}
\label{eq: Riccati}
\Lambda_{\varepsilon} \Omega_{YY} \Lambda_{\varepsilon}^\top + \varepsilon \Lambda_{\varepsilon} \Omega_{YY} - I_{d_y} = O.
\end{equation}
In particular, the equation implies that $\Lambda_{\varepsilon}\Omega_{YY}$ is symmetric. 
Recalling that $I_{d_y} - B \Sigma^{-1} B^\top= I_{d_y} - \Lambda_{\varepsilon} \Omega_{YY} \Lambda_{\varepsilon}^\top$ is positive definite, we conclude that $\Lambda_{\varepsilon} \Omega_{YY}$ is positive definite. 

Let $\Psi_\varepsilon := \Lambda_{\varepsilon} \Omega_{YY}$. If  $\Psi_\varepsilon$ and $\Omega_{YY}^{-1}$ commute, then $\Psi_\varepsilon\Omega_{YY}^{-1} = \Lambda_{\varepsilon}$ is symmetric positive definite, since the product of two commuting symmetric positive definite matrices is symmetric positive definite. We shall verify that $\Psi_\varepsilon$ and $\Omega_{YY}^{-1}$ indeed commute.  Plugging $\Lambda_{\varepsilon} = \Psi_\varepsilon \Omega_{YY}^{-1}$ into equation (\ref{eq: Riccati}), we see that $(\Psi_\varepsilon \Omega_{YY}^{-1}) \Omega_{YY} (\Psi_\varepsilon \Omega_{YY}^{-1})^\top + \varepsilon \Psi_\varepsilon = I_{d_y}$, or $\Psi_\varepsilon (\Psi_\varepsilon \Omega_{YY}^{-1})^\top + \varepsilon \Psi_\varepsilon = I_{d_y}$. Since  $\Psi_\varepsilon$ and $\Omega_{YY}^{-1}$ are symmetric, $\Psi_\varepsilon \Omega_{YY}^{-1} \Psi_\varepsilon + \varepsilon \Psi_\varepsilon = I_{d_y}$. Right-multiplying by $\Psi_\varepsilon^{-1}$ gives $\Psi_\varepsilon  \Omega_{YY}^{-1} + \varepsilon I_{d_y} = \Psi_\varepsilon^{-1}$. Left-multiplying by $\Psi_\varepsilon^{-1}$ gives $\Omega_{YY}^{-1} \Psi_\varepsilon + \varepsilon I_{d_y} = \Psi_\varepsilon^{-1}$. Comparing two, we see that $\Psi_\varepsilon  \Omega_{YY}^{-1} = \Omega_{YY}^{-1} \Psi_\varepsilon$, which yields that $\Lambda_{\varepsilon}$ is symmetric  positive definite. 

Finally, we observe that $\Omega_{YY}\Lambda_{\varepsilon} =\Omega_{YY} \Psi_\varepsilon\Omega_{YY}^{-1} = \Psi_\varepsilon = \Lambda_{\varepsilon}\Omega_{YY}$, so $\Lambda_{\varepsilon}$ and $\Omega_{YY}$ commute as well. 
Equation (\ref{eq: Riccati}) now reads as $\Lambda_{\varepsilon}^2 + \varepsilon \Lambda_{\varepsilon} = \Omega_{YY}^{-1}$, or
\[
\left(\Lambda_{\varepsilon} + \frac{\varepsilon}{2}I_{d_y}\right)^2 = \Omega_{YY}^{-1} + \frac{\varepsilon^2}{4}I_{d_y}.
\]
Solving this leads to 
\[
\Lambda_{\varepsilon} = \left(\Omega_{YY}^{-1} + \frac{\varepsilon^2}{4} I_{d_y}\right)^{1/2} - \frac{\varepsilon}{2} I_{d_y}.
\]
Using the block matrix inversion formula again, we have $\Omega_{YY}^{-1} = \Sigma_{YY} - \Sigma_{YX}\Sigma_{XX}^{-1}\Sigma_{XY}$. This completes the proof of Part (i).
\end{proof}

\subsection{Proof of Theorem \ref{thm: gaussian solution}~(ii)}
To find the dual potentials, we first need to find $\Theta$ explicitly. We have already obtained $\Theta_{UY} = - \frac{1}{\varepsilon}I_{d_y}$ and $\Theta_{XU} = \frac{1}{\varepsilon} \Sigma_{XX}^{-1} \Sigma_{XY}$. 

From the block matrix inversion formula applied to $\Theta = \Gamma_\varepsilon^{-1}$, we obtain  
\[
\begin{pmatrix}
\Theta_{UX} & \Theta_{UY}
\end{pmatrix} = - \Theta_{UU} \begin{pmatrix}
O & \Lambda_\varepsilon
\end{pmatrix} \begin{pmatrix}
\Omega_{XX} & \Omega_{XY} \\ \Omega_{YX} & \Omega_{YY}
\end{pmatrix}.
\]
Since $\Theta_{UY} = - \frac{1}{\varepsilon}I_{d_y}$,  we deduce that $\Theta_{UU} = \frac{1}{\varepsilon} (\Lambda_{\varepsilon} \Omega_{YY})^{-1}$. But according to equation (\ref{eq: Riccati}), $\Lambda_{\varepsilon} \Omega_{YY} (\Lambda_{\varepsilon} + \varepsilon I_{d_y}) = I_{d_y}$, which implies $(\Lambda_{\varepsilon} \Omega_{YY})^{-1} = \Lambda_{\varepsilon} + \varepsilon I_{d_y}$. Conclude that 
\[
\Theta_{UU} = \frac{1}{\varepsilon} (\Lambda_{\varepsilon} + \varepsilon I_{d_y}) = \frac{1}{\varepsilon} \Lambda_{\varepsilon} + I_{d_y}.
\]

It remains to find $\Theta_{ZZ} = (\Sigma - B^\top B)^{-1}$. Recall the partition (\ref{eq: partition}). From the Sherman-Morrison-Woodbury formula, we have
\[
(\Sigma - B^\top B)^{-1} = \Sigma^{-1} + \Sigma^{-1}B^\top (I_{d_y} - B\Sigma^{-1}B^\top)^{-1}B\Sigma^{-1}
\]
Recalling that $\Lambda_\varepsilon$ is symmetric, we observe that
\[
\Sigma^{-1}B^\top = \begin{pmatrix}
\Omega_{XY} \Lambda_{\varepsilon} \\
\Omega_{YY} \Lambda_{\varepsilon}
\end{pmatrix} \quad \text{and} \quad 
I_{d_y}- B\Sigma^{-1}B^\top = I_{d_y} - \Lambda_{\varepsilon}\Omega_{YY}\Lambda_{\varepsilon}^\top.
\]
Since, by equation (\ref{eq: Riccati}), $I_{d_y} - \Lambda_{\varepsilon}\Omega_{YY}\Lambda_{\varepsilon} = \varepsilon \Lambda_{\varepsilon}\Omega_{YY}$, we see that
\[
(I - B\Sigma^{-1}B^\top )^{-1} = \frac{1}{\varepsilon}\Omega_{YY}^{-1}\Lambda_{\varepsilon}^{-1}.
\]
Combining these yields
\[
\Theta_{ZZ} = \Omega + \frac{1}{\varepsilon} \begin{pmatrix} \Omega_{XY} \Lambda_{\varepsilon} \\ \Omega_{YY} \Lambda_{\varepsilon} \end{pmatrix} (\Omega_{YY}^{-1} \Lambda_{\varepsilon}^{-1}) \begin{pmatrix} \Lambda_{\varepsilon} \Omega_{YX} & \Lambda_{\varepsilon} \Omega_{YY} \end{pmatrix}.
\]
The expression can be further simplified.
From $\Sigma \Omega = I_{d_x+d_y}$ we have $\Sigma_{XX} \Omega_{XY} + \Sigma_{XY} \Omega_{YY} = O$, or $\Omega_{XY} = G\Omega_{YY}$ with $G = -\Sigma_{XX}^{-1} \Sigma_{XY}$.
Plugging this, we have
\[
\begin{split}
\Theta_{ZZ} &= \Omega + \frac{1}{\varepsilon} \left( \begin{pmatrix} G \\ I_{d_y} \end{pmatrix} \Omega_{YY} \Lambda_{\varepsilon} \right) (\Omega_{YY}^{-1} \Lambda_{\varepsilon}^{-1}) \left( \Lambda_{\varepsilon} \Omega_{YY} \begin{pmatrix} G^\top & I_{d_y} \end{pmatrix} \right) \\
&= \Omega + \frac{1}{\varepsilon} \begin{pmatrix} G \\ I_{d_y} \end{pmatrix} (\Lambda_{\varepsilon} \Omega_{YY}) \begin{pmatrix} G^\top & I_{d_y} \end{pmatrix} \\
&= \Omega + \frac{1}{\varepsilon} \begin{pmatrix} G \Psi_\varepsilon G^\top & G \Psi_\varepsilon \\ \Psi_\varepsilon G^\top & \Psi_\varepsilon \end{pmatrix}
\end{split}
\]
where $\Psi_\varepsilon = \Lambda_{\varepsilon}\Omega_{YY}$. 

In conclusion, denoting by 
$\Theta_0 := \begin{pmatrix}
I_{d_y} & O \\
O & \Omega
\end{pmatrix}$
the precision matrix for the reference measure $\mu \otimes \nu$, we have
\[
\Delta \Theta := \Theta - \Theta_0 = \frac{1}{\varepsilon} \begin{pmatrix}
\Lambda_{\varepsilon} & -G^\top & -I_{d_y} \\
-G & G \Psi_\varepsilon G^\top & G \Psi_\varepsilon \\
-I_{d_y} & \Psi_\varepsilon G^\top & \Psi_\varepsilon
\end{pmatrix}.
\]
Since $\pi^\varepsilon$ is multivariate Gaussian, for $w = (u^\top,x^\top,y^\top)^\top$,
\[
\begin{split}
\log\frac{d\pi}{d(\mu\otimes \nu)}(w) &= -\frac{1}{2}(w-m)^\top \Theta (w -m) + \frac{1}{2}(w-m)^\top \Theta_0(w-m) + \frac{1}{2}\log\frac{\det (\Theta)}{\det (\Theta_0)} \\
\end{split}
\]

We shall find the constant $\frac{1}{2}\log\frac{\det (\Theta)}{\det (\Theta_0)}$. 
Observe that $\det(\Theta) = \det(\Gamma_\varepsilon)^{-1}$ and $\det(\Theta_0) = \det(\Sigma)^{-1}$. From the partition (\ref{eq: partition}),  Schur's formula yields
\[
\begin{split}
    \det(\Gamma_\varepsilon) &= \det(\Sigma) \det(I_{d_y} - \Lambda_{\varepsilon} \Omega_{YY} \Lambda_{\varepsilon}) \\
    &= \det(\Sigma) \det(\varepsilon \Lambda_{\varepsilon} \Omega_{YY})
\end{split} 
\]
where the last step follows from equation (\ref{eq: Riccati}). Therefore,
\[
\frac{\det (\Theta)}{\det (\Theta_0)} = \frac{\det(\Sigma)^{-1} \det(\varepsilon \Lambda_{\varepsilon} \Omega_{YY})^{-1}}{\det(\Sigma)^{-1}} = \det(\varepsilon \Lambda_{\varepsilon} \Omega_{YY})^{-1}.
\]

It remains to find explicit formulas for $(f^\varepsilon, g^\varepsilon, h^\varepsilon)$ so that
\begin{equation}
\begin{split}
&\frac{1}{\varepsilon}\left(f^\varepsilon(u) + \langle g^\varepsilon(u), x\rangle + h^\varepsilon(x,y) - \frac{1}{2}\|u-y\|^2\right) \\
&\quad = -\frac{1}{2}(w-m)^\top \Delta \Theta (w-m) - \frac{1}{2}\log \det(\varepsilon \Lambda_{\varepsilon} \Omega_{YY}).
\end{split}
\label{eq: density gaussian}
\end{equation}
By direct calculation, 
\[
\begin{split}
\varepsilon(w-m)^\top \Delta \Theta (w-m) &= u^\top \Lambda_{\varepsilon} u + x^\top G\Psi_\varepsilon G^\top x + (y-m_Y)^\top \Psi_\varepsilon(y-m_Y) \\
&\qquad -2u^\top G^\top x -2u^\top (y-m_Y) + 2x^\top G\Psi_\varepsilon(y-m_Y).
\end{split}
\]
It is straightforward to verify that choosing $(f^\varepsilon,g^\varepsilon,h^\varepsilon)$ as in (\ref{eq: gaussian dual potentials}) satisfies (\ref{eq: density gaussian}).
\qed

\subsection{Proof of Proposition \ref{prop: gaussian limit}}

The weak convergence follows directly from the closed-form expression of $\pi^\varepsilon$. 
For $(U,\tilde{X},\tilde{Y}) \sim \pi^o$, the conditional distribution of $\tilde{Y}$ given $(U,\tilde{X})$ is a multivariate Gaussian distribution with mean 
\[
m_Y + 
\begin{pmatrix}
\Lambda_o & \Sigma_{YX} 
\end{pmatrix}
\begin{pmatrix}
I_{d_y} & O \\
O & \Sigma_{XX}^{-1}
\end{pmatrix}
\begin{pmatrix}
U \\
\tilde{X}
\end{pmatrix}
=m_Y + \Lambda_o U + \Sigma_{YX}\Sigma_{XX}^{-1}\tilde{X},
\]
and covariance matrix
\[
\Sigma_{YY}-\begin{pmatrix}
\Lambda_o & \Sigma_{YX} 
\end{pmatrix}
\begin{pmatrix}
I_{d_y} & O \\
O & \Sigma_{XX}^{-1}
\end{pmatrix}
\begin{pmatrix}
\Lambda_o \\ \Sigma_{YX} 
\end{pmatrix}
=\Sigma_{YY}-\Lambda_o^2-\Sigma_{YX}\Sigma_{XX}^{-1}\Sigma_{XY} = O,
\]
that is, $\tilde{Y} = m_Y + \Lambda_o U + \Sigma_{YX}\Sigma_{XX}^{-1}\tilde{X}$ a.s. In view of Example 2.1 in \cite{carlier2016vector}, $\pi^o$ is optimal for (\ref{eq: vqr indep}).
\qed

\subsection{Proof of Proposition \ref{prop: gaussian approx}}

    Assume without loss of generality that $m_Y = 0$. We write $w=(u,x,y) \in \R^{N}$ with $N:=d_x+2d_y$ instead of $w=(u^\top,x^\top,y^\top)^\top$ for notational convenience.     Observe that $\pi^o$ is supported on the vector subspace
    \[
    \mathcal{S} := \left\{(u,x,y) \in \R^{N} :y = \Lambda_o u  -G^\top x \right\}.
    \]
    Let $\calS^\bot$ denote the orthocomplement of $\calS$ and $\mathsf{P}_{\calS^\bot}$ denote the projection matrix onto $\calS^\bot$ with $\mathsf{P}_{\calS}:=I_{N}-\mathsf{P}_{\calS^\bot}$. 
    We observe that for any $W^\varepsilon \sim \pi^\varepsilon$ and $W^o \sim \pi^o$, 
    \[
    \| W^\varepsilon-W^o \|^2 = \| \mathsf{P}_{\calS^\bot}W^\varepsilon\|^2 + \| \mathsf{P}_{\calS}W^\varepsilon-W^o\|^2.
    \]
    This implies 
    \[
    \mathsf{W}_2^2(\pi^\varepsilon,\pi^o) = \E[\| \mathsf{P}_{\calS^\bot}W^\varepsilon\|^2] + \mathsf{W}_2^2(\pi^\varepsilon \circ \mathsf{P}_{\calS}^{-1},\pi^o). 
    \]
 The first term on the right-hand side depends only on the marginal $\pi^\varepsilon$. We split the rest of the proof into two steps.

 \underline{Step 1}. We first establish that 
 \[
 \E[\| \mathsf{P}_{\calS^\bot}W^\varepsilon\|^2] = \varepsilon \tr (L^{-1}\Lambda_o) + O(\varepsilon^{2}). 
 \]
Let $U \sim \calN(0,I_{d_y})$ and $X \sim \calN(0,\Sigma_{XX})$ be independent and generate $Y^\varepsilon$ as
\[
Y^\varepsilon = \Lambda_\varepsilon U -G^\top X + \eta_\varepsilon, \quad \eta_\varepsilon \sim \calN(0,\varepsilon \Lambda_\varepsilon), \ \eta_\varepsilon \independent (U,X).
\]
It is not difficult to see that $W^\varepsilon := (U,X,Y^\varepsilon) \sim \pi^\varepsilon$.    We have
\[
W^\varepsilon = \underbrace{(U,X,\Lambda_o U - G^\top X)}_{\in \calS} + (0,0,(\Lambda_\varepsilon-\Lambda_o)U+\eta_\varepsilon),
\]
so that $\mathsf{P}_{\calS^\bot}W^\varepsilon = \mathsf{P}_{\calS^\bot}(0,0,(\Lambda_\varepsilon-\Lambda_o)U+\eta_\varepsilon)$. 
We shall verify the following lemma.

    \begin{lemma}
        \label{lem: distance formula}
        For any $r \in \R^{d_y}$, the distance from $(0,0,r) \in \R^{N}$ to $\calS$ is $\sqrt{r^\top L^{-1}r}$.
    \end{lemma}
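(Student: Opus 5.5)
The plan is to write $\calS$ as the kernel of a linear map and compute the distance by projecting onto the row space of that map. By definition $\calS = \{ w=(u,x,y) : Aw = 0\}$, where $A$ is the $d_y \times N$ matrix
\[
A := \begin{pmatrix} -\Lambda_o & G^\top & I_{d_y} \end{pmatrix}.
\]
Hence $\calS^\perp$ is the row space of $A$. Since $A$ contains the identity block, it has full row rank $d_y$, so $AA^\top$ is invertible. The orthogonal projection onto $\calS^\perp$ is then $\mathsf{P}_{\calS^\perp} = A^\top (AA^\top)^{-1} A$.

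For any $w \in \R^N$, the squared distance from $w$ to $\calS$ equals
\[
\|\mathsf{P}_{\calS^\perp} w\|^2 = w^\top A^\top (AA^\top)^{-1} A w = (Aw)^\top (AA^\top)^{-1}(Aw).
\]
For $w=(0,0,r)$ we have $Aw = r$, so the squared distance is $r^\top (AA^\top)^{-1} r$. It remains to identify $AA^\top$ with $L$. A direct block computation gives
\[
AA^\top = \Lambda_o\Lambda_o^\top + G^\top G + I_{d_y}.
\]
Now $\Lambda_o$ is symmetric, being a matrix square root of a positive definite matrix, so $\Lambda_o\Lambda_o^\top = \Lambda_o^2$. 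Also $G = -\Sigma_{XX}^{-1}\Sigma_{XY}$ with $\Sigma_{XX}$ symmetric, so
\[
G^\top G = \Sigma_{YX}\Sigma_{XX}^{-2}\Sigma_{XY}.
\]
Thus $AA^\top = I_{d_y} + \Lambda_o^2 + \Sigma_{YX}\Sigma_{XX}^{-2}\Sigma_{XY} = L$. This yields the claimed distance $\sqrt{r^\top L^{-1} r}$, and also shows that $L$ is positive definite, hence invertible.

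There is no real obstacle here. The only care needed is sign and transpose bookkeeping: $y = \Lambda_o u - G^\top x$ rearranges to $-\Lambda_o u + G^\top x + y = 0$, which is consistent with the representation $Y^\varepsilon = \Lambda_\varepsilon U - G^\top X + \eta_\varepsilon$. One should also double-check the projection formula $A^\top(AA^\top)^{-1}A$, which holds because $A$ has full row rank.
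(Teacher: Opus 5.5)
Your proof is correct and is essentially the paper's argument. The paper writes $\calS=\ker H$ with $H=\begin{pmatrix}\Lambda_o & -G^\top & -I_{d_y}\end{pmatrix}=-A$, uses $\mathsf{P}_{\calS^\bot}=H^\top(HH^\top)^{-1}H$ with $HH^\top=L$, and evaluates at $(0,0,r)$, exactly as you do; you additionally spell out the block computation of $AA^\top$ and the full-row-rank justification, which the paper leaves implicit.
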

    \begin{proof}[Proof of Lemma \ref{lem: distance formula}]
        Observe that $\mathcal{S} = \{w \in \R^{N}: Hw = 0\}$ with $H:= \begin{pmatrix}
            \Lambda_o & -G^\top & -I_{d_y}
        \end{pmatrix}$, so that $\mathsf{P}_{\calS^\bot}=H^\top (H H^\top)^{-1} H$ with $HH^\top =  L$. Since $H(0,0,r) = -r$, the desired claim follows.
        \end{proof}
Now, we have
\[
\begin{split}
\| \mathsf{P}_{\calS^\bot}(0,0,(\Lambda_\varepsilon-\Lambda_o)U+\eta_\varepsilon)\|^2 &= \big((\Lambda_\varepsilon-\Lambda_o)U+\eta_\varepsilon\big)^\top L^{-1}\big((\Lambda_\varepsilon-\Lambda_o)U+\eta_\varepsilon\big) \\
&=\tr \Big( L^{-1} \big((\Lambda_\varepsilon-\Lambda_o)U+\eta_\varepsilon\big)\big((\Lambda_\varepsilon-\Lambda_o)U+\eta_\varepsilon\big)^\top \Big).
\end{split}
\]
Taking expectation, we have
\[
\begin{split}
\E[\| \mathsf{P}_{\calS^\bot}W^\varepsilon\|^2] &= \tr \Big ( L^{-1} \big ( (\Lambda_\varepsilon - \Lambda_o)^2 + \varepsilon \Lambda_\varepsilon \big) \Big) \\
&=\varepsilon \tr (L^{-1}\Lambda_o) + \tr \big ( L^{-1}  (\Lambda_\varepsilon - \Lambda_o)^2 \big ) + \varepsilon \tr \big(L^{-1}(\Lambda_\varepsilon - \Lambda_o)\big). 
\end{split}
\]
Using spectral expansions, one can see that $\Lambda_\varepsilon  - \Lambda_o = O(\varepsilon)$, so that the last two terms on the right-hand side are $O(\varepsilon^2)$. This finishes the proof of Step 1. 

\underline{Step 2}. In this step, we shall show that 
\[
\mathsf{W}_2^2(\pi^\varepsilon \circ \mathsf{P}_{\calS}^{-1},\pi^o) = O(\varepsilon^2),
\]
which, combined with Step 1, leads to the conclusion of the proposition. 

        Let $\mathsf{Q} \in \R^{N \times(d_x + d_y)}$ be a matrix whose columns consist of orthonormal vectors spanning $\mathcal{S}$, so that $\mathsf{P}_{\calS} = \mathsf{Q} \mathsf{Q}^\top$. Observe that 
        \[
    \mathsf{W}_2^2(\pi^\varepsilon \circ \mathsf{P}_{\calS}^{-1},\pi^o) = \inf_{W^\varepsilon \sim \pi^\varepsilon,W^o \sim \pi^o} \E[\| \mathsf{Q}^\top W^\varepsilon - \mathsf{Q}^\top W^o\|^2]. 
        \]
        We have $\mathsf{Q}^\top W^\varepsilon \sim \calN(0,\mathsf{Q}^\top \Gamma_\varepsilon \mathsf{Q})$ and $\mathsf{Q}^\top W^o \sim \calN(0,\mathsf{Q}^\top \Gamma_o \mathsf{Q})$. Now, \cite[Exercise 1.8.3(b)]{chewi2025statistical} yields
        \[
      \inf_{W^\varepsilon \sim \pi^\varepsilon,W^o \sim \pi^o} \E[\| \mathsf{Q}^\top W^\varepsilon - \mathsf{Q}^\top W^o\|^2] \le \| (\mathsf{Q}^\top\Gamma_\varepsilon\mathsf{Q})^{1/2} - (\mathsf{Q}^\top\Gamma_o \mathsf{Q})^{1/2}\|_{\text{F}}^2,
        \]
        where $\| \cdot \|_{\mathrm{F}}$ denotes the Frobenius norm. 
       Observe that $\| \mathsf{Q}^\top(\Gamma_\varepsilon - \Gamma_o) \mathsf{Q}\|_{\text{F}} \leq \|\Gamma_\varepsilon - \Gamma_o\|_{\text{F}}$.
        Recalling the explicit formulas for $\Gamma_\varepsilon$ and $\Gamma_o$, we have $\|\Gamma_\varepsilon - \Gamma_o\|_{\text{F}}^2 = 2\|\Lambda_\varepsilon -\Lambda_o\|_{\text{F}}^2 \le d_y\varepsilon^2 / 2$. Since $\lambda_{\min} ((\mathsf{Q}^\top \Gamma_o \mathsf{Q})^{1/2}) = \sqrt{\lambda_{\min} (\mathsf{Q}^\top \Gamma_o \mathsf{Q})} > 0$ (with $\lambda_{\min}(\cdot)$ denoting the minimum eigenvalue), \cite[Problem X.5.5]{bhatia1997matrix} yields
        \[
        \| (\mathsf{Q}^\top\Gamma_\varepsilon\mathsf{Q})^{1/2} - (\mathsf{Q}^\top\Gamma_o \mathsf{Q})^{1/2}\|_{\text{F}} \le \frac{1}{\sqrt{\lambda_{\min} (\mathsf{Q}^\top \Gamma_o \mathsf{Q})}} \| \mathsf{Q}^\top(\Gamma_\varepsilon - \Gamma_o) \mathsf{Q}\|_{\text{F}}.
        \]
      Conclude that 
        \[
\mathsf{W}_2^2(\pi^\varepsilon \circ \mathsf{P}_{\calS}^{-1},\pi^o)  \le \frac{1}{2\lambda_{\min} (\mathsf{Q}^\top \Gamma_o \mathsf{Q})} d_y \varepsilon^2 = O(\varepsilon^2),
        \]
completing the proof.
\qed

\appendix
\section{Partial converse of Lemma \ref{lem: coercive}}
\label{sec: converse}

The following lemma provides a partial converse of Lemma \ref{lem: coercive}. 

\begin{lemma}
Let $(M,d)$ be a Polish metric space.
Suppose that $Q \in \calP_1(M)$, but
\[
\int e^{\alpha d(x,x_0)} \, dQ(x) = \infty
\]
for every $\alpha > 0$ for some $x_0 \in M$. Then there exists a sequence $P_n \in \calP_1(M)$ such that $\limsup_{n \to \infty} \KL{P_n}{Q} < \infty$ but $\lim_{n \to \infty} \mathsf{W}_1(P_n,Q) = \infty$. In particular, $\KL{\cdot}{Q}$ is not coercive in $\mathsf{W}_1$.
\end{lemma}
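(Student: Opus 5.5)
The construction is an exponential tilt of $Q$. Set $r(x):=d(x,x_0)$. For $\alpha>0$ and $t>0$, define the probability measures
\[
dP_{\alpha,t}:=\frac{e^{\alpha (r\wedge t)}}{Z_{\alpha,t}}\,dQ, \qquad Z_{\alpha,t}:=\int e^{\alpha(r\wedge t)}\,dQ<\infty .
\]
These are bounded perturbations of $Q$, so $P_{\alpha,t}\in\calP_1(M)$. Their divergence is
\[
\KL{P_{\alpha,t}}{Q}=\alpha\int (r\wedge t)\,dP_{\alpha,t}-\log Z_{\alpha,t}.
\]
For fixed $\alpha$, we have $Z_{\alpha,t}\uparrow\infty$ as $t\to\infty$, by monotone convergence and the hypothesis $\int e^{\alpha r}\,dQ=\infty$. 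The problem is that the first term $\alpha\int (r\wedge t)\,dP_{\alpha,t}$ may then also blow up. A two-parameter tuning is needed.

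\emph{Choice of parameters.} The cleaner route is a Gibbs-variational inequality with a second tilt parameter. Take the pair $(\alpha,2\alpha)$. By Donsker--Varadhan,
\[
\int 2\alpha(r\wedge t)\,dP_{\alpha,t}\le \KL{P_{\alpha,t}}{Q}+\log Z_{2\alpha,t}.
\]
Write $\Lambda(\beta):=\log Z_{\beta,t}$, which is convex in $\beta$. Then $\int (r\wedge t)\,dP_{\alpha,t}=\Lambda'(\alpha)$ and $\KL{P_{\alpha,t}}{Q}=\alpha\Lambda'(\alpha)-\Lambda(\alpha)$, with $\Lambda(0)=0$.

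\emph{Key step.} Pick $\beta_n\downarrow 0$ and $t_n\to\infty$ so that $\Lambda_{t_n}$ is of order $1$ at scale $\beta_n$, say
\[
\Lambda_{t_n}(2\beta_n)\le 2 \quad\text{while}\quad \Lambda_{t_n}(2\beta_n)-\Lambda_{t_n}(\beta_n)\ge \tfrac12 .
\]
Such a choice exists because of two facts. First, for fixed $\beta$, $\Lambda_t(\beta)\to\infty$ as $t\to\infty$. Second, for fixed $t$, $\Lambda_t(\beta)\to 0$ as $\beta\to0$, continuously. So for each $n$ one can choose $\beta$ by continuity with $\Lambda_t(2\beta)=2$, taking $t$ large enough. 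The second inequality, $\Lambda_t(\beta)\le 3/2$, is the delicate point; I would try to get it from convexity together with $\Lambda_t(0)=0$, which gives $\Lambda_t(\beta)\le \Lambda_t(2\beta)/2=1$.

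\emph{Consequences.} Set $\alpha=\beta_n$ and let $s$ lie between $\beta_n$ and $2\beta_n$. Convexity gives
\[
\Lambda'(s)\ge \frac{\Lambda(2\beta_n)-\Lambda(\beta_n)}{\beta_n}\ge \frac{1}{\beta_n}\to\infty .
\]
Then use the tilt at $s$, i.e.\ $P_n:=P_{s,t_n}$. Convexity also gives $s\Lambda'(s)\le \Lambda(2s)-\Lambda(s)$, so we want $\Lambda$ controlled at $4\beta_n$. Accordingly, I would normalize the choice above at $4\beta_n$ instead of $2\beta_n$, for example $\Lambda(4\beta_n)=4$, which gives $\Lambda(2\beta_n)\le 2$ and $\Lambda(\beta_n)\le 1$ by convexity. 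This yields:
\begin{itemize}
\item the divergence bound $\KL{P_n}{Q}=s\Lambda'(s)-\Lambda(s)\le \Lambda(2s)\le 4$;
\item the mean lower bound $\int r\,dP_n\ge\Lambda'(s)\ge (\Lambda(2\beta_n)-\Lambda(\beta_n))/\beta_n$.
\end{itemize}

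\emph{Main obstacle.} The hardest step is ensuring that $\Lambda(2\beta_n)-\Lambda(\beta_n)$ stays bounded below. If it failed, convexity with $\Lambda(0)=0$ would force $\Lambda(2\beta_n)$ small; this contradicts the normalization, since $\Lambda(4\beta_n)=4$ together with convexity gives increments growing geometrically. So I would pick $s\in[2\beta_n,4\beta_n]$ and use the increment over that interval, which is at least $4-2=2$. That gives $\Lambda'(s)\ge 1/\beta_n$, with $\KL{P_n}{Q}\le\Lambda(8\beta_n)$; the normalization then has to be placed one dyadic step further out. Finally, $\mathsf{W}_1(P_n,Q)\ge \int r\,dP_n-\int r\,dQ\to\infty$, since $\int r\,dQ<\infty$ and $\beta_n\to0$. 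Hence $\KL{\cdot}{Q}$ is not coercive in $\mathsf{W}_1$: the sublevel set $\{\KL{\cdot}{Q}\le C\}$ is unbounded in $\mathsf{W}_1$.
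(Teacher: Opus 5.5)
Your exponential tilt is a reasonable idea, but the step you flag as the main obstacle is never resolved, and the argument as written cannot close. Convexity of $\Lambda_t$ with $\Lambda_t(0)=0$ gives only one-sided information. A lower bound on $\Lambda_t'(s)$ comes from backward differences $(\Lambda_t(s)-\Lambda_t(a))/(s-a)$ with $a<s$. So your slope bound for $s$ strictly between $\beta_n$ and $2\beta_n$ does not follow from convexity; it holds only at $s=2\beta_n$. The divergence bound $s\Lambda_t'(s)\le\Lambda_t(2s)-\Lambda_t(s)$, on the other hand, needs control to the right of $s$. A single normalization cannot supply both. That is why each repair pushes the normalization ``one dyadic step further out'', and the regress never terminates. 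The claim meant to stop it is wrong. For $d_k:=\Lambda_t(2^k\beta)-\Lambda_t(2^{k-1}\beta)$, convexity gives $d_{k+1}\ge 2d_k$, so later increments dominate earlier ones, not conversely. Hence $\Lambda_t(4\beta_n)=4$ puts no lower bound on $\Lambda_t(2\beta_n)-\Lambda_t(\beta_n)$: a convex function can be nearly flat on $[0,2\beta_n]$ and then climb to $4$.

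This is not just bookkeeping; calibrating $\Lambda_{t_n}$ to be of order $1$ is the wrong scale. Take $Q$ on $[0,\infty)$, $x_0=0$, with $Q(r\ge x)=e^{-\sqrt{x}}$. One checks that $\Lambda_t(s)=c$ with $c>0$ fixed forces $st-\sqrt t=O(1)$, and then $P_{s,t}$ puts mass $\approx 1-e^{-c}$ within $O(\sqrt t)$ of $t$. Tilting at $\lambda s$ with a fixed $\lambda\ge 1$ then gives divergence of order $\sqrt t$. A fixed $\lambda<1$ gives a tilted measure whose mean stays bounded in $t$. So each of your normalizations fails for this $Q$. Here, bounded divergence in fact forces $\Lambda_t(s)\to 0$: only mass of order $t^{-1/2}$ may sit near $t$.

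The fix is to calibrate $s\Lambda_t'(s)$ instead of $\Lambda_t$. The map $s\mapsto s\Lambda_t'(s)$ is continuous, nondecreasing, zero at $0$ and unbounded, so pick $s_t$ with $s_t\Lambda_t'(s_t)=1$. Then $\KL{P_{s_t,t}}{Q}=1-\Lambda_t(s_t)\le 1$. Moreover $s_t\to 0$: if $s_{t_k}\ge\delta>0$ along a subsequence, monotonicity and convexity would give $\Lambda_{t_k}(\delta)\le\delta\Lambda_{t_k}'(\delta)\le 1$, contradicting $\Lambda_t(\delta)\to\infty$. Hence $\int r\,dP_{s_t,t}\ge\Lambda_t'(s_t)=1/s_t\to\infty$.

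The paper reaches the same calibration without log-Laplace calculus. It takes $P_n=(1-q_n^{-1})Q+q_n^{-1}Q(\cdot\mid r\ge r_n)$ with $q_n=\log(1/Q(r\ge r_n))$, choosing $r_n\to\infty$ so that $q_n/r_n\to 0$; this is possible because the tail is not exponentially small. Then $\KL{P_n}{Q}\le 1$ by convexity, and $\mathsf{W}_1(P_n,Q)\ge q_n^{-1}(r_n-\int r\,dQ)\to\infty$.
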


\begin{proof}
Set $\bar{F}(r):= Q(\{ d(\cdot,x_0) \ge r \})$. We first verify that 
\[
\liminf_{r \to \infty}r^{-1}\log \big(1/\bar{F}(r)\big) = 0.
\]
Suppose on the contrary that the left-hand side is strictly positive (including $\infty$). Then, there exist $\beta > 0, r_0 > 0$ such that 
\[
\bar{F}(r) \le e^{-\beta r}, \ r \ge r_0,
\]
which, however, implies that $d(\cdot,x_0)$ is sub-exponential under $Q$, contradicting the assumption. 

Now, choose $r_n \to \infty$ such that 
\[
\lim_{n \to \infty} r_n^{-1}q_n= 0 \quad \text{with} \quad q_n := \log \big(1/\bar{F}(r_n)\big). 
\]
Assuming that $n$ is large enough, we define 
\[
P_n := (1-q_n^{-1})Q + q_n^{-1}Q_n \quad \text{with} \quad dQ_n := e^{q_n}\mathbbm{1}_{\{ d(\cdot,x_0) \ge r_n \}} \, dQ. 
\]
By convexity of $\KL{\cdot}{Q}$, 
\[
\KL{P_n}{Q} \le q_n^{-1} \KL{Q_n}{Q} = 1.
\]
On the other hand, by the Kantorovich-Rubinstein duality,
\[
\begin{split}
\mathsf{W}_1(P_n,Q) &\ge \int d(\cdot,x_0) \, d(P_n-Q) = q_n^{-1} \int d(\cdot,x_0) \, d(Q_n-Q) \\
&\ge q_n^{-1}(r_n-a),
\end{split}
\]
where $a := \int d(\cdot,x_0) \, dQ < \infty$. Since $\lim_{n \to \infty} q_n^{-1}r_n = \infty$ by construction, we obtain the desired claim. 
\end{proof}
\bibliographystyle{alpha}
\bibliography{reference}

@book{koenker2005quantile,
  title={Quantile Regression},
  author={Koenker, R.},
  year={2005},
  publisher={Cambridge University Press}
}

@article{sinkhorn1967diagonal,
  title={Diagonal equivalence to matrices with prescribed row and column sums},
  author={Sinkhorn, R.},
  journal={The American Mathematical Monthly},
  volume={74},
  number={4},
  pages={402--405},
  year={1967},
  publisher={JSTOR}
}

@article{franklin1989scaling,
  title={On the scaling of multidimensional matrices},
  author={Franklin, J. and Lorenz, J.},
  journal={Linear Algebra and its Applications},
  volume={114},
  pages={717--735},
  year={1989},
  publisher={Elsevier}
}

@article{gonzalez2022weak,
  title={Weak limits of entropy regularized optimal transport; potentials, plans and divergences},
  author={Gonzalez-Sanz, A. and Loubes, J.-M. and Niles-Weed, J.},
  journal={arXiv preprint arXiv:2207.07427},
  year={2022}
}

@article{genevay2019sample,
  title={Sample complexity of {S}inkhorn divergences},
  author={Genevay, A. and Chizat, L. and Bach, F. and Cuturi, M. and Peyr{\'e}, G.},
  journal={Proceedings of the 22nd International Conference on Artificial Intelligence and Statistics},
  pages={1574--1583},
  year={2019},
  volune={89},
  organization={PMLR}
}

@article{goldfeld2024limit,
  title={Limit theorems for entropic optimal transport maps and Sinkhorn divergence},
  author={Goldfeld, Z. and Kato, K. and Rioux, G. and Sadhu, R.},
  journal={Electronic Journal of Statistics},
  volume={18},
  number={1},
  pages={980--1041},
  year={2024},
  publisher={The Institute of Mathematical Statistics and the Bernoulli Society}
}

@incollection{follmer1988random,
  title={Random fields and diffusion processes},
  author={F{\"o}llmer, H.},
  booktitle={{\'E}cole d'{\'E}t{\'e} de Probabilit{\'e}s de Saint-Flour XV--XVII, 1985--87},
  pages={101--203},
  year={1988},
  publisher={Springer}
}

@article{nutz2022entropic,
author = {Nutz, M. and Wiesel, J.},
  title={Entropic optimal transport: Convergence of potentials},
  journal={Probability Theory and Related Fields},
  volume={184},
  number={1},
  pages={401--424},
  year={2022},
  publisher={Springer}
}

@book{deuschel2001large,
  title={Large Deviations},
  author={Deuschel, J.-D. and Stroock, D. W.},
  year={2001},
  publisher={American Mathematical Society}
}

@article{wang2010sanov,
  title={Sanov’s theorem in the {W}asserstein distance: a necessary and sufficient condition},
  author={Wang, R. and Wang, X. and Wu, L.},
  journal={Statistics \& Probability Letters},
  volume={80},
  number={5-6},
  pages={505--512},
  year={2010},
  publisher={Elsevier}
}

@article{hallin2022measure,
  title={Measure transportation and statistical decision theory},
  author={Hallin, M.},
  journal={Annual Review of Statistics and Its Application},
  volume={9},
  number={1},
  pages={401--424},
  year={2022},
  publisher={Annual Reviews}
}

@article{ghosal2022multivariate,
  title={Multivariate ranks and quantiles using optimal transport: Consistency, rates and nonparametric testing},
  author={Ghosal, P. and Sen, B.},
  journal={The Annals of Statistics},
  volume={50},
  number={2},
  pages={1012--1037},
  year={2022},
  publisher={Institute of Mathematical Statistics}
}

@article{chernozhukov2017monge,
  title={Monge--{K}antorovich depth, quantiles, ranks and signs},
  author={Chernozhukov, V. and Galichon, A. and Hallin, M. and Henry, M.},
  journal={The Annals of Statistics},
  volume={45},
  number={1},
  pages={223--256},
  year={2017},
}

@article{koenker1978regression,
  title={Regression quantiles},
  author={Koenker, R. and Bassett Jr, G.},
  journal={Econometrica},
  volume={46},
  number={1},
  pages={33--50},
  year={1978},
  publisher={JSTOR}
}

@article{janati2020entropic,
  title={Entropic optimal transport between unbalanced {G}aussian measures has a closed form},
  author={Janati, H. and Muzellec, B. and Peyr{\'e}, G. and Cuturi, M.},
  journal={Advances in Neural information Processing Systems},
  volume={33},
  pages={10468--10479},
  year={2020}
}

@article{malamut2025convergence,
  title={Convergence rates of the regularized optimal transport: Disentangling suboptimality and entropy},
  author={Malamut, H. and Sylvestre, M.},
  journal={SIAM Journal on Mathematical Analysis},
  volume={57},
  number={3},
  pages={2533--2558},
  year={2025},
  publisher={SIAM}
}

@article{mena2019statistical,
  title={Statistical bounds for entropic optimal transport: sample complexity and the central limit theorem},
  author={Mena, G. and Niles-Weed, J.},
  journal={Advances in Neural Information Processing Systems},
  volume={32},
  year={2019}
}

@article{cuturi2013sinkhorn,
  title={Sinkhorn distances: Lightspeed computation of optimal transport},
  author={Cuturi, M.},
  journal={Advances in Neural Information Processing Systems},
  volume={26},
  year={2013}
}

@article{peyre2019computational,
  title={Computational optimal transport: With applications to data science},
  author={Peyr{\'e}, G. and Cuturi, M.},
  journal={Foundations and Trends{\textregistered} in Machine Learning},
  volume={11},
  number={5-6},
  pages={355--607},
  year={2019},
  publisher={Now Publishers, Inc.}
}

@book{santambrogio2015optimal,
  title={Optimal Transport for Applied Mathematicians},
  author={Santambrogio, F.},
  year={2015},
  publisher={Springer}
}

@article{brenier1991polar,
  title={Polar factorization and monotone rearrangement of vector-valued functions},
  author={Brenier, Y.},
  journal={Communications on Pure and Applied Mathematics},
  volume={44},
  number={4},
  pages={375--417},
  year={1991},
  publisher={Wiley Online Library}
}

@article{follmer1997entropy,
  title={Entropy minimization and {S}chr{\"o}dinger processes in infinite dimensions},
  author={F{\"o}llmer, H. and Gantert, N.},
  journal={The Annals of Probability},
  volume={25},
  number={2},
  pages={901--926},
  year={1997},
  publisher={Institute of Mathematical Statistics}
}

@article{nutz2024martingale,
  title={On the Martingale {S}chr\"{o}dinger Bridge between Two Distributions},
  author={Nutz, M. and Wiesel, J.},
  journal={arXiv preprint arXiv:2401.05209},
  year={2024}
}

@book{krantz2002primer,
  title={A Primer of Real Analytic Functions},
  author={Krantz, S. G. and Parks, H. R.},
  edition={Second},
  year={2002},
  publisher={Springer Science \& Business Media}
}

@article{carlier2016vector,
  title={Vector quantile regression: an optimal transport approach},
  author={Carlier, G. and Chernozhukov, V. and Galichon, A.},
  journal={The Annals of Statistics},
  volume={44},
  number={3},
  pages={1165--1192},
  year={2016}
}

@article{carlier2022vector,
  title={Vector quantile regression and optimal transport, from theory to numerics},
  author={Carlier, G. and Chernozhukov, V. and De Bie, G. and Galichon, A.},
  journal={Empirical Economics},
  volume={62},
  number={1},
  pages={35--62},
  year={2022},
  publisher={Springer}
}

@book{chewi2025statistical,
  title={Statistical Optimal Transport},
  author={Chewi, S. and Niles-Weed, J. and Rigollet, P.},
  series={Lecture Notes in Mathematics},
  year = {2025},
  publisher={Springer}
}

@article{carlier2023convergence,
  title={Convergence rate of general entropic optimal transport costs},
  author={Carlier, G. and Pegon, P. and Tamanini, L.},
  journal={Calculus of Variations and Partial Differential Equations},
  volume={62},
  number={4},
  pages={116},
  year={2023},
  publisher={Springer}
}

@article{carlier2017vector,
  title={Vector quantile regression beyond the specified case},
  author={Carlier, G. and Chernozhukov, V. and Galichon, A.},
  journal={Journal of Multivariate Analysis},
  volume={161},
  pages={96--102},
  year={2017},
  publisher={Elsevier}
}

@article{eckstein2024convergence,
  title={Convergence rates for regularized optimal transport via quantization},
  author={Eckstein, S. and Nutz, M.},
  journal={Mathematics of Operations Research},
  volume={49},
  number={2},
  pages={1223--1240},
  year={2024},
  publisher={INFORMS}
}

@article{ghosal2025convergence,
  title={On the convergence rate of {S}inkhorn’s algorithm},
  author={Ghosal, P. and Nutz, M.},
  journal={Mathematics of Operations Research},
  year={2025},
  publisher={INFORMS}
}

@book{panaretos2020invitation,
  title={An Invitation to Statistics in Wasserstein Space},
  author={Panaretos, V. M. and Zemel, Y.},
  series={SpringerBriefs in Probability and Mathematical Statistics},
  year={2020},
  publisher={Springer Nature}
}

@article{beiglbock2013model,
  title={Model-independent bounds for option prices—a mass transport approach},
  author={Beiglb{\"o}ck, M. and Henry-Labordere, P. and Penkner, F.},
  journal={Finance and Stochastics},
  volume={17},
  number={3},
  pages={477--501},
  year={2013},
  publisher={Springer}
}

@article{galichon2014stochastic,
  title={A stochastic control approach to No-Arbitrage bounds given marginals, with an application to Lookback options},
  author={Galichon, A. and Henri-Labord{\`e}re, P. and Touzi, N.},
  journal={The Annals of Applied Probability},
  volume={24},
  number={1},
  pages={312--336},
  year={2014}
}

@article{beiglbock2025fundamental,
  title={The Fundamental Theorem of Weak Optimal Transport},
  author={Beiglb{\"o}ck, M. and Pammer, G. and Riess, L. and Schrott, S.},
  journal={arXiv preprint arXiv:2501.16316},
  year={2025}
}

@article{leonard2012schrodinger,
  title={From the {S}chr{\"o}dinger problem to the {M}onge--{K}antorovich problem},
  author={L{\'e}onard, C.},
  journal={Journal of Functional Analysis},
  volume={262},
  number={4},
  pages={1879--1920},
  year={2012},
  publisher={Elsevier}
}

@article{mikami2004monge,
  title={Monge’s problem with a quadratic cost by the zero-noise limit of $h$-path processes},
  author={Mikami, T.},
  journal={Probability theory and related fields},
  volume={129},
  number={2},
  pages={245--260},
  year={2004},
  publisher={Springer}
}

@article{leonard2013survey,
  title={A survey of the {S}chr{\"o}dinger problem and some of its connections with optimal transport},
  author={L{\'e}onard, C.},
  journal={Discrete and Continuous Dynamical Systems},
  volume={34},
  number={4},
  pages={1533--1574},
  year={2013},
  publisher={Discrete and Continuous Dynamical Systems}
}

@article{nutz2021introduction,
  title={Introduction to Entropic Optimal Transport},
  author={Nutz, M.},
  journal={Lecture Notes, Columbia University},
  year={2021}
}

@book{villani2009optimal,
  title={Optimal Transport: Old and New},
  author={Villani, C.},
  year={2009},
  publisher={Springer}
}

@article{csiszar1975divergence,
  title={I-divergence geometry of probability distributions and minimization problems},
  author={Csisz{\'a}r, I.},
  journal={The Annals of Probability},
  volume={3},
  number={1},
  pages={146--158},
  year={1975}
}

@book{Dudley_2002, 
  title={Real Analysis and Probability}, 
  publisher={Cambridge University Press}, 
  author={Dudley, R. M.}, 
  edition = {second},
  year={2002}
}

@book{nesterov2018lectures,
  title={Lectures on Convex Optimization},
  author={Nesterov, Y.},
  year={2018},
  publisher={Springer}
}

@book{brezis2011functional,
  title={Functional Analysis, Sobolev Spaces and Partial Differential Equations},
  author={Brezis, H.},
  year={2011},
  publisher={Springer}
}

@article{wainwright2008graphical,
  title={Graphical models, exponential families, and variational inference},
  author={Wainwright, M. J. and Jordan, M. I.},
  journal={Foundations and Trends{\textregistered} in Machine Learning},
  volume={1},
  number={1--2},
  pages={1--305},
  year={2008},
  publisher={Now Publishers, Inc.}
}

@book{barndorff2014information,
  title={Information and Exponential Families: in Statistical Theory},
  author={Barndorff-Nielsen, O.},
  year={2014},
  publisher={John Wiley \& Sons}
}

@article{del2023improved,
  title={An improved central limit theorem and fast convergence rates for entropic transportation costs},
  author={del Barrio, E. and Gozalez-Sanz, A. and Loubes, J.-M. and Niles-Weed, J.},
  journal={SIAM Journal on Mathematics of Data Science},
  volume={5},
  number={3},
  pages={639--669},
  year={2023},
  publisher={SIAM}
}

@article{carlier2025weak,
  title={Weak optimal transport with moment constraints: constraint qualification, dual attainment and entropic regularization},
  author={Carlier, G. and Malamut, H. and Sylvestre, M.},
  journal={arXiv preprint arXiv:2511.16211},
  year={2025}
}

@article{mallasto2022entropy,
  title={Entropy-regularized 2-{W}asserstein distance between {G}aussian measures},
  author={Mallasto, A. and Gerolin, A. and Minh, H. Q.},
  journal={Information Geometry},
  volume={5},
  number={1},
  pages={289--323},
  year={2022},
  publisher={Springer}
}

@book{abadir2005matrix,
  title={Matrix Algebra},
  author={Abadir, K. M. and Magnus, J. R.},
  year={2005},
  publisher={Cambridge University Press}
}

@article{ouellette1981schur,
  title={Schur complements and statistics},
  author={Ouellette, D. V.},
  journal={Linear Algebra and its Applications},
  volume={36},
  pages={187--295},
  year={1981},
  publisher={Elsevier}
}

@book{polyanskiy2025information,
  title={Information Theory: From Coding to Learning},
  author={Polyanskiy, Y. and Wu, Y.},
  year={2025},
  publisher={Cambridge University Press}
}

@article{bolley2005weighted,
  title={Weighted {C}sisz{\'a}r-{K}ullback-{P}insker inequalities and applications to transportation inequalities},
  author={Bolley, F. and Villani, C.},
  journal={Annales de la Facult{\'e} des Sciences de Toulouse: Math{\'e}matiques},
  volume={14},
  number={3},
  pages={331--352},
  year={2005}
}

@book{bhatia1997matrix,
  title={Matrix Analysis},
  author={Bhatia, R.},
  year={1997},
  publisher={Springer}
}

\end{document}